\newcommand{\E}{\mathbb{E}}
\newcounter{cprop}[section]
\newcommand{\mcB}{\mathcal{B}}
\newtheorem{definition}[cprop]{Definition}
\newtheorem{remark}[cprop]{Remark}
\newtheorem{lemma}[cprop]{Lemma}
\newtheorem{example}[cprop]{Example}
\newtheorem{proposition}[cprop]{Proposition}
\newtheorem{theorem}[cprop]{Theorem}
\newtheorem{corollary}[cprop]{Corollary}
\title{\bf Random attractors for singular stochastic partial differential equations.}
\author{
\textbf{Benjamin Gess}
\thanks{Supported by DFG-Internationales Graduiertenkolleg “Stochastics and Real World Models”, the SFB-701 and the BiBoS-Research Center. \vskip5pt \textbf{Acknowledgments:} The author would like to thank Michael R\"ockner for valuable discussions and comments.}  \\
\small{Faculty of Mathematics, University of Bielefeld, Germany} \\
\small{bgess@math.uni-bielefeld.de}
}
\begin{document}
 
\maketitle

\begin{abstract}
  The existence of random attractors for singular stochastic partial differential equations (SPDE) perturbed by general additive noise is proven. The drift is assumed only to satisfy the standard assumptions of the variational approach to SPDE with compact embeddings in the Gelfand triple and singular coercivity. For ergodic, monotone, contractive random dynamical systems it is proven that the attractor consists of a single random point. In case of real, linear multiplicative noise finite time extinction is obtained. Applications include stochastic generalized fast diffusion equations and stochastic generalized singular $p$-Laplace equations perturbed by L\'evy noise with jump measure having finite first and second moments. 
\end{abstract}

\noindent {\bf 2000 Mathematics Subject Classification AMS}: Primary: 37L55, 60H15. Secondary: 37L30. 

\noindent {\bf Key words }: stochastic partial differential equations, stochastic fast diffusion equation, stochastic $p$-Laplace equation, random dynamical system, random attractor.

\bigskip
\maketitle


\section{Introduction}
The dynamical behaviour of random systems induced by stochastic (partial) differential equations has attained much interest in recent years. Especially the analysis of the long-time behaviour of such systems by means of the existence of random attractors has been intensively discussed since the foundational work in \cite{CDF97,CF94,S92}. However, for quasilinear SPDE the existence of random attractors could so far only be proven for degenerate drifts occurring for example in porous media and degenerate $p$-Laplace equations. The case of singular equations requires different techniques and is solved in this paper for the first time. The existence of attractors to certain singular PDE such as singular reaction-diffusion equations has been an open problem even in the deterministic case. For related results on deterministic, degenerate PDE which are partially complemented by this paper we refer to \cite{CCD99,CG01,CG03,CD00,CR10,LYZ10,TY03,W10,YSZ07} and references therein.

Until recently the existence of random attractors could only be shown for concrete examples of SPDE of semilinear type, i.e.\ of the form
  \[ dX_t = (AX_t + F(X_t))dt + B(X_t)dW_t, \]
with linear main part $A$. The first approach to a truly quasilinear stochastic equation has been presented in \cite{BGLR10} by proving the existence of random attractors for generalized stochastic porous media equations. This specific example has then been recovered in \cite{GLR11} (at least for more regular noise) as an application of a first general result providing the existence of random attractors for a class of SPDE perturbed by general noise, i.e.\ for equations of the form
  \[ dX_t = A(X_t)dt + dN_t. \]
In that paper superlinear/degenerate drifts, i.e.\ satisfying $\Vbk{A(v),v} \ge c\|v\|_V^\a$ with $\a \ge 2$ as well as an additional approximative coercivity condition have been considered. Regarding applications, the superlinear case corresponds to degenerate parabolicity as occurring for example in porous media equations and degenerate $p$-Laplace equations.

The results mentioned above are complemented in several ways by the present paper. First, we consider SPDE with \textit{singular} drift (i.e.\ satisfying $\Vbk{A(v),v} \ge c\|v\|_V^\a = \frac{c}{\|v\|_V^{2-\a}}\|v\|^2$ with $1< \a < 2$) of the form
\begin{equation}\label{ra_s:eqn:spde}
  dX_t = A(t,X_t)dt + dN_t.
\end{equation}
Such equations are called singular since the coercivity coefficient $\frac{c}{\|v\|_V^{2-\a}}$ is singular when $\|v\|_V$ approaches $0$. Second, we will not require the additional approximative coercivity condition used in \cite{GLR11} and thereby we are able to allow much rougher noise $N_t$. Third, we present an approach that allows to combine the knowledge about the existence of a random pullback attractor and the ergodicity of the associated Markovian semigroup to prove that the random attractor consists of a single random point, which in turn is a globally stable equilibrium of the RDS. 

For deterministic equations it is well known that the dynamical behaviour of systems induced by singular equations differs strongly from the one produced by superlinear/degenerate drifts. For example, while solutions to porous media equations (PME) decay to $0$ at a polynomial rate (cf.\ \cite{A81}), finite time extinction occurs for solutions to fast diffusion equations (cf.\ \cite{V07} and references therein). Concerning the existence of attractors two main obstacles occur in case of singular drifts. First, it is more difficult to obtain a global control for the solutions or in other words to prove bounded absorption for the associated RDS, since the coercivity coefficient in
 $$\Vbk{A(v),v} \ge \frac{c}{\|v\|_V^{2-\a}} \|v\|_V^2$$
degenerates for large values of $v$. This problem is solved by the present paper by proving new a priori estimates for singular ODE. Second, the regularizing properties of singular equations are weaker than in the degenerate case. For example, consider the stochastic singular $p$-Laplace equation (S$p$LE) with reaction term $G$
  \[ dX_t = \left(div(|\nabla X_t|^{\a-2} \nabla X_t)+G(X_t)\right)dt + dN_t,\quad 1 < \a < 2.\]
In the degenerate ($\a > 2$) and in the singular case ($1 < \a < 2$) solutions take values in $W^{1,\a}_0 \subseteq L^2$. While for degenerate equations this implies regularization into the invariant subspace $W_0^{1,2}$, this fails in the singular case. Therefore, a different technique to obtain attraction by a compact set is needed. Even in the deterministic case the existence of attractors to such singular equations has been an open problem.

Another prominent example of a singular SPDE is the stochastic fast diffusion equation 
  \[ dX_t = \left(\D(|X_t|^{\a-1}sgn(X_t)) + g(t)\right)dt + dN_t,\quad 1 < \a < 2. \tag{SFDE}\]
Among other applications, SFDE are used as models for heat diffusion in plasma and for self-organized criticality \cite{BDPR09-2}. Concerning the theory of self-organized criticality, in particular the convergence of arbitrary initial states to the critical state (which is a key property of systems exhibiting \textit{self-organized} criticality) and therefore the long-time behaviour of the solutions is of importance. Some results for SFDE perturbed by linear multiplicative space-time noise have been given in \cite{BDPR09-2,BR11}. The physically relevant case of additive noise has not yet been considered. Application of our general result to SFDE with additive noise proves the existence of a pullback random attractor consisting of a single random point, or equivalently the existence of a globally stable random equilibrium. Stochastic porous media equations and SFDE have been intensively investigated in recent years (cf.\ e.g.\ \cite{BDPR08,BDPR08-2,BDPR09,DPR04,DPRRW06,G11c,K06,RRW07,RW08} and references therein).

In the applications it is important to consider noise with only small spatial correlations, which corresponds to noise satisfying only low spatial regularity. While in \cite{GLR11} it essentially had to be assumed $\D N_t \in V$ we only require $N_t \in V$ (where $V$ is the Banach space of the Gelfand triple associated to the variational formulation of an SPDE, c.f.\ $(A1)$-$(A4)$ below) by adopting a technique from \cite{CCD99}. In addition, in \cite{GLR11} the drift $A$ was assumed to be weakly coercive in some additional, compactly embedded space $S\subseteq H$ (cf.\ \cite[$(H5)$]{GLR11}). This assumption will not be needed here and thus our results apply to any singular equation of the form \eqref{ra_s:eqn:spde} fitting into the variational framework (i.e. satisfying $(A1)$-$(A4)$ below) as long as the embedding $V \subseteq H$ is compact. This enables us to cover stochastic \textit{generalized} singular $p$-Laplace equations and stochastic \textit{generalized} fast diffusion equations. \\
The main idea is that the variational approach to SPDE is based on a regularizing property of the drift, meaning that solutions take values in the smaller space $V \subseteq H$ for almost all times. We use this property to deduce the compactness of the stochastic flow (i.e.\ $S(t,s;\o)B$ is a compact set for all $B\subseteq H$ bounded), which in turn yields attraction by a compact set as soon as bounded absorption has been shown.

For deterministic dynamical systems $\vp$ on partially ordered spaces $H$ it is well known that a monotonicity (or order-preserving) property of $\vp$ (i.e.\ $\vp(t)x \ge \vp(t)y$, for $x \ge y$) significantly simplifies the dynamics. In some recent work (cf.\ \cite{AC98,CJ10,C02,CS04,S08} and the references therein) such monotonicity properties have been used to study dynamical properties of RDS. In particular, in \cite{CS04} it has been shown that a monotone, ergodic RDS has a weak pullback attractor consisting of a single random point. However, it had to be assumed that the cone $H_+ \subseteq H$ of nonnegative elements of $H$ has nonempty interior, which is not satisfied by many commonly used state spaces, as for example $L^p$ spaces. We prove that for an ergodic, monotone, contractive RDS on a partially ordered space satisfying the existence of upper bounds (i.e.\ for $x,y\in H$ there exists $z \in H$ with $x,y \le z$) the random attractor consists of a single point. In contrast to the assumptions in \cite{CS04}, upper bounds do exist in $L^p$ spaces as well as in $H=(H_0^1(\mcO))^*$ and thus our results can be applied to SFDE as well as to S$p$LE. We emphasize that the standard approach to prove single-valuedness of the random attractor for degenerate equations (cf.\ e.g.\ \cite{BGLR10,GLR11}) does not apply to singular equations since singular drifts do not satisfy the required strong monotonicity conditions.

As concrete examples we consider stochastic generalized fast diffusion equations and stochastic generalized singular $p$-Laplace equations perturbed by additive noise $N_t$ with $N_t$ having stationary increments, c\`adl\`ag paths and sufficiently slow growth. In particular, this includes all L\'evy processes with jump measure having finite first and second moments. The general results will then prove existence and compactness of an associated RDS and the existence of a random pullback attractor. In the case that $N_t$ is an infinite dimensional Brownian Motion, we will further prove that the associated Markovian semigroup is strongly mixing and that the random attractor consists of a single point, hence is a globally stable equilibrium point.

For singular SPDE perturbed by real linear multiplicative noise, i.e.\ of equations of the form
\begin{equation}\label{ra_s:eqn:SPDE_real_mult}
  dX_t = A(t,X_t)dt + \mu X_t \circ d\b_t,
\end{equation}
we show that the long-time behaviour can be described by forward random attractors (and hence weak random attractors), where $\b_t$ is a real-valued Brownian motion and $\circ$ is the Stratonovich stochastic integral. For the notions of forward and weak random attractors see \cite{S02}.  In fact, we will show a lot more, namely finite time extinction, i.e.\ $\mcA(\o)=\{0\}$ is forward-absorbing. This result is related to the model of self-organized criticality as presented in \cite{BDPR09-2}. With our simplified structure of the noise, i.e.\ space-independent noise, we can strengthen the assertion of finite time extinction with non-zero probability proven in \cite{BDPR09-2} to almost sure finite time extinction. We consider a stochastic perturbation in the Stratonovich sense, since the corresponding It\^o noise causes an artificial stabilization of the random dynamics as it has been observed in \cite{CCLR07}.

In Section 1 we recall some basics on stochastic flows, RDS and random attractors. The precise assumptions and main results will be given in Section 2, while their proofs are postponed to Section 4. In Section 3 we present the application of our general results to SFDE and SpLE.

\section{Basics on stochastic flows and RDS}
We now recall the basic framework of stochastic flows, RDS and random attractors. Let $(H,d)$ be a complete separable metric space and $((\O,\mcF,\P),\{\theta_t\}_{t \in \R})$ be a metric dynamical system, i.e. $(t,\o) \mapsto \theta_t(\o)$ is $(\mcB(\R) \otimes \mcF,\mcF)$-measurable, $\theta_0 =$ id, $\theta_{t+s} = \theta_t \circ \theta_s$ and $\theta_t$ is $\P$-preserving, for all $s,t \in \R$.

\begin{definition}[Stochastic Flow]
  A family of maps $S(t,s;\o): H \to H$, $s \le t$ is said to be a stochastic flow, if for all $\o \in \O$ 
  \begin{enumerate}
    \item[(i)] $S(s,s;\o) = id_H$, for all $s \in \R$.
    \item[(ii)] $S(t,s;\o)x = S(t,r;\o)S(r,s;\o)x$, for all $t \ge r \ge s$, $x \in H$.
  \end{enumerate}
  A stochastic flow $S(t,s;\o)x$ is called
  \begin{enumerate}
    \item[(iii)] measurable if the map $\o \to S(t,s;\o)x$ is measurable for all $s \le t, x \in H$
    \item[(iv)] continuous if the map $x \to S(t,s;\o)x$ is continuous for all $s \le t$, $\o \in \O$
    \item[(v)] a cocycle if 
                  \[ S(t,s;\o)x = S(t-s,0;\t_s\o)x,\]
                 for all $x \in H$, $\o \in \O$ and all $t \ge s$.
  \end{enumerate}
\end{definition}

\begin{definition}[Random Dynamical System]
  A measurable map $\vp: \R_+ \times \O \times H \to H$ satisfying
  \begin{enumerate}
    \item $\vp(0,\o) = 0$
    \item $\vp(t+s,\o) = \vp(t,\t_s \o) \circ \vp(s,\o)$, $\forall \o \in \O,\ s,t \ge 0$
  \end{enumerate}
  is called an RDS. If $x \mapsto \vp(t,\o)x$ is continuous for all $t \in \R$, $\o \in \O$ then $\vp$ is a continuous RDS.
\end{definition}

There is a close connection between RDS and cocycle stochastic flows. Let $S(t,s;\o)$ be a cocycle stochastic flow such that $(t,\o,x) \mapsto S(t,0;\o)x$ is measurable. Then $\vp(t,\o) := S(t,0;\o)$ defines an RDS. Vice versa, let $\vp(t,\o)$ be an RDS and define $S(t,s;\o) := \vp(t-s,\t_s \o)$. Then $S(t,s;\o)$ is a measurable cocycle stochastic flow.\\
Having the notions of a stochastic flow and RDS at our disposal we can now consider their long time behaviour. In the following let $S(t,s;\o)x$ be a stochastic flow.

\begin{definition}
  A family $\{D(t,\o)\}_{t \in \R,\ \o \in \O}$ of subsets of $H$ is said to be
  \begin{enumerate}
   \item a random closed set if it is $\P$-a.s.\ closed and $\o \to d(x,D(t,\o))$ is measurable for each $x \in H$, $t \in \R$. In this case we also call $D$ measurable.
   \item right lower-semicontinuous if for each $t \in \R$, $\o \in \O$, $y \in D(t,\o)$ and $t_n \downarrow t$ there is a sequence $y_n \in D(t_n,\o)$ such that $y_n \to y$ or equivalently $d(y,D(t_n,\o))\to 0$.
  \end{enumerate}
\end{definition}

For normed spaces $H$ we define $\|B\|_H := \sup_{b \in B}\|b\|_H$. 
The a priori bound for solutions to singular SPDE given in the proof of Theorem \ref{ra_s:thm:ra_add} will lead to collections of $\o$-dependent sets satisfying the following growth property:
\begin{definition}\label{ra_s:def:universes}
  Let $H$ be a normed space. A family of sets $\{D(t,\o)\}_{t \in \R,\ \o \in \O}$ is said to be of subpolynomial growth of order $\b > 0$ if 
    \[ \lim_{t \to -\infty} \frac{\|D(t,\o)\|_H}{|t|^{\b}} = 0, \  \forall \o \in \O.\] 
\end{definition}

In the following let $\mcD$ be a system of families $\{D(t,\o)\}_{t \in \R, \o \in \O}$ of subsets of $H$.

\begin{definition}[Absorption \& Attraction]\label{ra_s:def:abs}
  A family of sets $\{F(t,\o)\}_{t \in \R,\o \in \O}$ is said to be 
  \begin{enumerate}
   \item $\mcD$-absorbing, if there is a set $\O_0 \subseteq \O$ of full $\P$-measure such that for all $D \in \mcD$, $t \in \R$ and $\o \in \O_0$ there exists an absorption time $s_0 = s_0(\o,D,t)$ such that
     \[ S(t,s;\o)D(s,\o) \subseteq F(t,\o), \text{ for all } s \le s_0.\]
   \item $\mcD$-attracting, if there is a set $\O_0 \subseteq \O$ of full $\P$-measure such that for all $D \in \mcD$, $t \in \R$ and $\o \in \O_0$                                     
      \[ d(S(t,s;\o)D(s,\o),F(t,\o)) \to 0, \text{ for } s \to -\infty.\]
  \end{enumerate}
\end{definition}

\begin{definition}[Compactness \& Asymptotic Compactness]\label{ra_s:def:cmp_flow}
  A stochastic flow $S(t,s;\o)$ is called
  \begin{enumerate}
   \item $\mcD$-asymptotically compact if there is a $\mcD$-attracting family $\{ K(t,\o) \}_{t \in \R,\o \in \O}$ of compact subsets of $H$.
   \item compact if $S(t,s;\o)B$ is a  precompact subset of $H$ for all $t > s$, $\o \in \O$ and each bounded set $B \subseteq H$.
  \end{enumerate}
\end{definition}

By \cite[Lemma 2.1]{CDF97} we know that a continuous stochastic flow $S(t,s;\o)$ is asymptotically compact iff it is compactly attracted by a compact set for each time $t \in \R$ (where the $\P$-zero set on which attraction occurs may depend on $t$). Let $\{D(t,\o)\}_{t\in \R,\ \o \in \O}$ be a family of subsets of $H$. We define the $\O$-limit set by
    \[ \O(D,t;\o) := \bigcap_{r < t} \overline{ \bigcup_{\tau < r} S(t,\tau;\o)D(\tau,\o)}. \]
There are several stochastic generalizations of the deterministic notion of an attractor. For example, pullback attractors, forward attractors, weak attractors and measure attractors. For a comparison of some of these we refer to \cite{S02}. All of these notions coincide with the usual notion of an attractor in the deterministic case. In the sequel we will mainly work with pullback attractors and simply call them random attractors.

\begin{definition}[Random Attractor]
  A family of sets $\{\mcA(t,\o)\}_{t \in \R,\ \o \in \O}$ is called a $\mcD$-random attractor for $S(t,s;\o)$ if it satisfies $\P$-a.s.
  \begin{enumerate}
   \item $\mcA(t,\o)$ is nonempty and compact, for each $t \in \R$.
   \item $\mcA$ is $\mcD$-attracting.
   \item $\mcA$ is invariant under $S(t,s;\o)$, i.e.
         \[ S(t,s;\o)\mcA(s,\o) = \mcA(t,\o),\ \forall s \le t.\]
  \end{enumerate}
\end{definition}

With this definition we can give a sufficient condition for the existence of a random attractor (cf.\ \cite[Theorem 2.1.]{CDF97}). Let $o \in H$ be some arbitrary point in $H$.
\begin{theorem}[Existence of Random Attractors]\label{ra_s:thm:suff_cond_attr}
  Let $S(t,s;\o)$ be a continuous, $\mcD$-asymptotically compact stochastic flow and let $K$ be the corresponding $\mcD$-attracting family of compact subsets of $H$. Then
  $$\mcA(t,\o) := 
    \begin{cases}
      \overline{\bigcup_{D \in\mcD} \O(D,t;\o)}        &, \text{ if } \o \in \O_0 \\
       \{o\}                                           &, \text{ otherwise.}
    \end{cases}$$
  defines a random $\mcD$-attractor for $S(t,s;\o)$ and $\mcA(t,\o) \subseteq K(t,\o) \cap \O(K,t;\o)$ for all $\o \in \O_0$ (where $\O_0$ is as in Definition \ref{ra_s:def:abs}). 
 
  Let now $s \mapsto S(t,s;\o)x$ be right-continuous locally uniformly in $x$ and $S(t,s;\o)x$ be measurable. If either
  \begin{enumerate}
   \item[(i)] there is a countable family $\mcD_0 \subseteq \mcD$ consisting of right lower-semicontinuous random closed sets such that for each $D \in \mcD$, $\o \in \O$ there is a $D_0 \in \mcD_0$ satisfying $D(t,\o) \subseteq D_0(t,\o)$ for all $t \in \R$ small enough.
   \item[(ii)] $K \in \mcD$ and $K$ is a right lower-semicontinuous random closed set,
  \end{enumerate}
  then $\mcA$ is a random closed set. In case of (ii), $\mcA(t,\o)=\O(K,t;\o)$ for all $\o \in \O_0$.

  If $S(t,s;\o)x$ is a cocycle and either (i) holds with $\mcD_0$ consisting of strictly stationary sets or (ii) is satisfied with $K$ being strictly stationary, then $\mcA$ is strictly stationary.
\end{theorem}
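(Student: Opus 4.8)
The plan is to prove the three assertions in turn: existence of the attractor, measurability, and stationarity, the middle being where the new input is needed.

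\emph{Existence.} For the candidate built from the $\O$-limit sets I would first record the usual characterization: $y\in\O(D,t;\o)$ iff there are $\tau_n\to-\infty$ and $x_n\in D(\tau_n,\o)$ with $S(t,\tau_n;\o)x_n\to y$. Nonemptiness and compactness of $\O(D,t;\o)$ follow from $\mcD$-asymptotic compactness: for $\tau$ sufficiently negative $S(t,\tau;\o)D(\tau,\o)$ lies in an arbitrarily small neighbourhood of the compact attracting set $K(t,\o)$, so every such sequence subconverges and the intersection of the nested precompact closures is a nonempty compact subset of $K(t,\o)$; this also yields $\mcA(t,\o)\subseteq K(t,\o)\cap\O(K,t;\o)$. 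Invariance $S(t,s;\o)\O(D,s;\o)=\O(D,t;\o)$ comes from the flow property together with continuity of $x\mapsto S(t,s;\o)x$, pushing convergent sequences forward and pulling them back. $\mcD$-attraction is the standard contradiction: a sequence escaping a neighbourhood of $\mcA(t,\o)$ would, by asymptotic compactness, have a limit point in some $\O(D,t;\o)\subseteq\mcA(t,\o)$. This reproduces \cite[Theorem 2.1]{CDF97}.

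\emph{Measurability.} Treat case (ii) first, where $\mcA(t,\o)=\O(K,t;\o)$. Since the closed sets $U_r(\o):=\overline{\bigcup_{\tau<r}S(t,\tau;\o)K(\tau,\o)}$ shrink as $r\to-\infty$ and intersect in the nonempty compact $\O(K,t;\o)$, I would write
\[ d(x,\mcA(t,\o)) = \sup_{r<t}\,\inf_{\tau<r}\, d\bigl(x,S(t,\tau;\o)K(\tau,\o)\bigr). \]
The outer $\sup$ reduces to rational $r$ by monotonicity in $r$. The crux is replacing $\inf_{\tau<r}$ by an infimum over rational $\tau$: given real $\tau^*$ and $y\in K(\tau^*,\o)$, right lower-semicontinuity of $K$ yields rationals $q_n\downarrow\tau^*$ and $y_n\in K(q_n,\o)$ with $y_n\to y$, and right-continuity of $s\mapsto S(t,s;\o)x$ locally uniformly in $x$, with continuity in $x$, gives $S(t,q_n;\o)y_n\to S(t,\tau^*;\o)y$; taking the infimum over $y$ shows the rational infimum is $\le d(x,S(t,\tau^*;\o)K(\tau^*,\o))$, hence equals the real one. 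For fixed rational $q$, a Castaing representation $K(q,\o)=\overline{\{k_n(\o):n\in\mathbb{N}\}}$ by measurable selections reduces the inner distance to $\inf_n d(x,S(t,q;\o)k_n(\o))$, which is measurable since $(\o,x)\mapsto S(t,q;\o)x$ is Carath\'eodory. Thus $d(x,\mcA(t,\cdot))$ is a countable combination of measurable maps. For case (i), the hypothesis forces $\O(D,t;\o)\subseteq\O(D_0,t;\o)$ whenever $D(\cdot,\o)\subseteq D_0(\cdot,\o)$ near $-\infty$, so $\mcA(t,\o)=\overline{\bigcup_{D_0\in\mcD_0}\O(D_0,t;\o)}$ is a \emph{countable} union and $d(x,\mcA(t,\o))=\inf_{D_0\in\mcD_0}d(x,\O(D_0,t;\o))$, each term measurable by the argument just given applied to $D_0$.

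\emph{Stationarity.} Assuming $S$ is a cocycle, so $S(t,\tau;\o)=\vp(t-\tau,\t_\tau\o)$, I would write a strictly stationary family as $K(\tau,\o)=\hat K(\t_\tau\o)$ and use $\t_\tau\t_s=\t_{\tau+s}$ to obtain $S(t,\tau;\t_s\o)K(\tau,\t_s\o)=S(t+s,\tau+s;\o)K(\tau+s,\o)$. The substitutions $\tau'=\tau+s$, $r'=r+s$ in the $\O$-limit set then give $\O(K,t;\t_s\o)=\O(K,t+s;\o)$, i.e.\ $\mcA(t,\t_s\o)=\mcA(t+s,\o)$, which is strict stationarity; the same shift applies termwise to the countable union in case (i).

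\emph{Main obstacle.} The delicate point is the reduction of the uncountable operations $\sup_{r<t}$ and $\inf_{\tau<r}$ to countable ones in the measurability step while preserving measurability. This is precisely where the c\`adl\`ag setting enters: matching right lower-semicontinuity of the sets with right-continuity of the flow in the initial time $s$ is what makes the one-sided rational approximation valid, in place of the two-sided continuity available in the Brownian framework of \cite{CDF97}.
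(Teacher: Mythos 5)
Your proposal is correct and follows essentially the same route as the paper: existence and invariance via $\O$-limit sets and asymptotic compactness, measurability by reducing $\bigcap_{r<t}$ and $\inf_{\tau<r}$ to rationals using right lower-semicontinuity of the sets matched with right-continuity of $s\mapsto S(t,s;\o)x$ locally uniformly in $x$ (the paper's Lemma \ref{ra_s:lemma:suff_cond_attr}(iii)), and stationarity by the cocycle change of variables. The only cosmetic difference is that you spell out the measurable-selection (Castaing) step and the $\sup$--$\inf$ identity, which the paper compresses into an appeal to measurability of $S$ and $D$ and separability of $H$.
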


If the $\mcD$-random attractor $\mcA$ is contained in $\mcD$ or $\mcA$ is measurable and strictly stationary with $\{C \subseteq H|\ C \text{ compact}\} \subseteq \mcD$, then $\mcA$ is unique (cf.\ \cite{C99}). Moreover, the random $\mcD$-attractor $\mcA$ constructed in Theorem \ref{ra_s:thm:suff_cond_attr} is uniquely determined as the minimal random $\mcD$-attractor.

The dynamical behavior of RDS can be significantly simpler if the RDS preserves a partial order structure on the state space $H$. For example this idea has been used in \cite{AC98,CJ10, C02, CS04, S08}. A closed, convex cone $H_+ \subseteq H$ satisfying $H_+ \cap (-H_+) = \{0\}$ defines a partial order relation on $H$ which is compatible with the vector structure on $H$ by defining $x \le y$ iff $y-x \in H_+$. A cone $H_+$ is said to be solid if it has nonempty interior.

\begin{definition}[Monotone RDS]\label{ra_s:def:mon_RDS}
  Let $S \subseteq H$. An RDS $\vp$ is said to be
   \begin{enumerate}
    \item monotone on $S$ iff for all $x \le y$, $x,y \in S$, $t \ge 0$, $\o \in \O$
      \[ \vp(t,\o)x \le \vp(t,\o)y.\]
      If $S=H$ then $\vp$ is simply called monotone.
    \item contractive iff $t \mapsto \|\vp(t,\o)x-\vp(t,\o)y\|_H$ is non-increasing for all $x,y \in H$, $\o \in \O$.
   \end{enumerate} 
  
\end{definition}

\section{Setup and Main Results}\label{ra_s:sec:main_result}

Let
  $$V\subseteq H\equiv H^*\subseteq V^*$$
be a Gelfand triple, i.e.\ $H$ is a separable Hilbert space and is identified with its dual space $H^*$ by the Riesz isomorphism $i: H\rightarrow H^*$, $V$ is a reflexive  Banach space such that it is continuously and densely embedded into $H$. $_{V^*}\<\cdot,\cdot\>_V$ denotes the dualization between $V$ and its dual space $V^*$. Let $A: \R \times V \times \O \to V^*$ be such that for each $\o \in \O$, $A(\cdot,\cdot,\o): \R \times V \to V^*$ is $(\mcB(\R) \otimes \mcB(V),\mcB(V^*))$-measurable. We extend the mapping $A$ by $0$ to all of $H$ and assume that there are pathwise right-continuous mappings $C_1,C_2: \R \times \O \to \R$, $c: \R \times \O \to \R_+\backslash\{0\}$ and an \textbf{$\alpha \in (1,2)$} (corresponding to the case of singular equations) such that
\begin{enumerate}
 \item [$(A1)$] (Hemicontinuity) For all $v, v_1,v_2 \in V$, $t \in \R$ and $\o \in \O$, the map 
        $$ s \mapsto { }_{V^*} \< A(t,v_1+s v_2;\o),v \>_V$$
      is continuous on $\mathbb{R}$.
 \item [$(A2)$] (Monotonicity) For all $v_1,v_2 \in V, t \in \R, \o \in \O$ \\
      $$2{  }_{V^*}\<A(t,v_1;\o)-A(t,v_2;\o), v_1-v_2 \>_V \le C_2(t,\o) \|v_1-v_2\|_H^2. $$
 \item [$(A3)$] (Coercivity) There is a function $f: \R \times \O \to \R$ such that $f(\cdot,\o) \in L^1_{loc}(\R)$ and
      $$ 2{ }_{V^*}\<A(t,v;\o), v\>_V \le C_1(t,\o) \|v\|_H^2 - c(t,\o) \|v\|_V^\alpha + f(t,\o),$$
      for each $\o \in \O, t \in \R$ and $v \in V$.
 \item[$(A4)$] (Growth) For each $v \in V, \o \in \O$ and $t \in \R$
      $$ \|A(t,v;\o)\|_{V^*}^{\frac{\a}{\a-1}} \le C_1(t,\o) \|v\|_H^2 + C_2(t,\o) \|v\|_V^\alpha + f(t,\o).$$
\end{enumerate}

\begin{remark}\label{ra_s:rmk:reockner_setup}
  The assumptions needed in \cite{PR07} for the unique existence of a probabilistic solution to an SPDE of the form \eqref{ra_s:eqn:spde} perturbed by Wiener noise are slightly more restrictive (at least in case of additive noise). If we require in addition that $A$ is progressively measurable, $c,C_1,C_2$ are non-random, $f$ is adapted and $f \in L^1_{loc}(\R; L^1(\O))$ then there exists a unique variational solution to such an SPDE (cf. \cite[Theorem 4.2.4]{PR07}).
\end{remark}

\subsection{Additive noise}
Let $(\Omega,\mathcal{F},\{\mathcal{F}\}_{t \in \R},\mathbb{P})$ be a filtered probability space. We consider singular SPDE perturbed by general additive noise, i.e.\ equations of the form
\begin{align}\label{ra_s:eqn:SPDE_add}
  dX_t = A(t,X_t) dt + dN_t, 
\end{align}
where $N_t$ is an $\mcF_t$-adapted $V$-valued stochastic process with stationary increments and c\`adl\`ag paths. More precisely, we assume that $(\O,\mcF,\P,\{\theta_t\}_{t \in \R})$ is a metric dynamical system, i.e. $(t,\o) \mapsto \theta_t(\o)$ is $(\mcB(\R) \otimes \mcF,\mcF)$-measurable, $\theta_0 =$ id, $\theta_{t+s} = \theta_t \circ \theta_s$ and $\theta_t$ is $\P$-preserving, for all $s,t \in \R$ and
\begin{enumerate}
    \item [$(S1)$] (Strictly stationary increments) For all $t,s \in \R$, $\o \in \O$:
                    $$N_t(\o)-N_s(\o) = N_{t-s}(\t_s \o)-N_0(\t_s \o).$$
    \item [$(S2)$] (Regularity) $N_t$ has c\`adl\`ag paths.
\end{enumerate}
If $A$ satisfies $(A1)-(A4)$ we will prove the existence and uniqueness of solutions to \eqref{ra_s:eqn:SPDE_add} in the following sense

\begin{definition}\label{ra_s:def:soln_add} 
  An $H$-valued $\{\F_t\}_{t \in [s,\infty)}$-adapted process $\{X_t\}_{t \in [s,\infty)}$ with c\`adl\`ag paths in $H$ is called a solution of \eqref{ra_s:eqn:SPDE_add} if $X_\cdot(\o) \in L^\alpha_{loc}([s,\infty); V) \cap L^2_{loc}([s,\infty);H)$ and
      $$X_t(\o)=x+\int_s^t A(\tau,X_\tau(\o))\ d\tau + N_t(\o)-N_s(\o) $$
  holds for all $t \in [s,\infty)$, $\o \in \O$.
\end{definition}

We will prove that \eqref{ra_s:eqn:SPDE_add} generates a stochastic flow by first transforming the SPDE into a random PDE and then solving this random PDE for each fixed $\o \in \O$. Let $X(t,s;\o)x$ denote a solution to \eqref{ra_s:eqn:SPDE_add} starting in $x$ at time $s$. Define $\td{X}(t,s;\o)x := X(t,s;\o)x - N_t(\o)$. Then
\begin{align*}
  \td{X}(t,s;\o)x = x-N_s(\o) + \int_s^t A\left(r,\td{X}(r,s;\o)x+N_r(\o)\right) dr.
\end{align*}
Thus, we have to solve the following random PDE
\begin{align}\label{ra_s:eqn:transformed_PDE_add}
   Z(t,s;\o)x = x + \int_s^t A_\o(r,Z(r,s;\o)x) dr,
\end{align}
with $A_\o(r,v) := A(r,v + N_r(\o);\o)$. We then define the stochastic flow associated to \eqref{ra_s:eqn:SPDE_add} by
 \[ S(t,s;\o)x := Z(t,s;\o)(x-N_s(\o)) + N_t(\o),\]
so that $S(\cdot,s;\omega)$ satisfies
  $$ S(t,s;\o)x = x + \int_s^t A(S(r,s;\o)x) dr + N_t(\o) - N_s(\o),$$
for each fixed $\o \in \O$ and all $t \ge s$. Hence $S(t,s;\o)x$ solves \eqref{ra_s:eqn:SPDE_add} in the sense of Definition \ref{ra_s:def:soln_add}. 

Due to the time-inhomogeneity of the drift $A$ we cannot expect the stochastic flow to be a cocycle in general. If, however, the drift is strictly stationary, i.e. if the time-inhomogeneity is only due to the randomness of the drift the cocycle property will be obtained. In this case the stochastic flow induces an RDS associated to \eqref{ra_s:eqn:SPDE_add}.

\begin{theorem}[Generation]\label{ra_s:thm:generation_add}
  Assume $(A1)$-$(A4)$ and $(S1)$-$(S2)$. Then, the family of mappings $S(t,s;\o)x$ is a continuous stochastic flow in $H$. In addition, $S(t,s;\o)x$ is c\`adl\`ag in $t$ and right-continuous in $s$ locally uniformly in $x$. If $A$ is $(\mcB(\R) \otimes \mcB(V) \otimes \mcF,\mcB(V^*))$-measurable then $S(t,s;\o)x$ is a measurable stochastic flow. If $A(t,v;\o)$ is strictly stationary, i.e. $A(t,v;\o)=A(0,v;\t_t \o)$ then $S(t,s;\o)x$ is a cocycle and hence $\vp(t,\o) := S(t,0;\o)$ is a continuous RDS. 
\end{theorem}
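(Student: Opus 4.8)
The plan is to reduce everything to the deterministic random PDE \eqref{ra_s:eqn:transformed_PDE_add} and transfer the resulting properties back to $S$ through the affine change of variables $S(t,s;\o)x = Z(t,s;\o)(x-N_s(\o)) + N_t(\o)$. The first step is to solve \eqref{ra_s:eqn:transformed_PDE_add} pathwise. For each fixed $\o\in\O$ the coefficient $A_\o(r,v) = A(r,v+N_r(\o);\o)$ again fits the variational framework: hemicontinuity $(A1)$ is preserved under the translation by $N_r(\o)$, monotonicity $(A2)$ is translation-invariant in the sense needed, and one checks that coercivity $(A3)$ and growth $(A4)$ survive the shift because $N_r(\o)\in V$ is locally bounded in $V$ (using $(S2)$, c\`adl\`ag paths are locally bounded) and $\alpha\in(1,2)$; the extra terms produced by expanding $\|v+N_r\|_V^\alpha$ and $\|v+N_r\|_H^2$ can be absorbed using Young's inequality into the $-c\|v\|_V^\alpha$ and $C_1\|v\|_H^2$ terms, at the cost of enlarging $f$ to a new locally integrable function. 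Hence, by the standard variational existence/uniqueness theory (the deterministic counterpart of \cite[Theorem 4.2.4]{PR07}, applied on each interval $[s,T]$ with the right-continuous-in-time, $L^1_{loc}$ coefficients), \eqref{ra_s:eqn:transformed_PDE_add} has a unique solution $Z(t,s;\o)x\in C([s,\infty);H)\cap L^\alpha_{loc}([s,\infty);V)$ for every $x\in H$.

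Next I would verify the flow axioms. Property (i), $S(s,s;\o)=\mathrm{id}_H$, is immediate from $Z(s,s;\o)y=y$. The cocycle-in-time/evolution identity (ii) follows from uniqueness of solutions to \eqref{ra_s:eqn:transformed_PDE_add}: both $Z(t,s;\o)x$ and $Z(t,r;\o)Z(r,s;\o)x$ solve the same integral equation on $[r,\infty)$ with the same value at time $r$, so they coincide, and the affine conjugation carries this to $S(t,s;\o)x = S(t,r;\o)S(r,s;\o)x$. Continuity in the initial datum (iv) is where the monotonicity estimate does the work: for two solutions with initial data $x,y$ one tests the difference equation with $Z(r,s;\o)x-Z(r,s;\o)y$ and invokes $(A2)$ to obtain, via Gronwall, $\|Z(t,s;\o)x-Z(t,s;\o)y\|_H^2 \le e^{\int_s^t C_2(r,\o)\,dr}\|x-y\|_H^2$; since the translation by $N_t(\o)$ is an isometry on $H$, the same Lipschitz-in-$H$ bound holds for $S$, giving continuity (and in fact local Lipschitz continuity) of $x\mapsto S(t,s;\o)x$. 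The c\`adl\`ag property in $t$ and right-continuity in $s$ (locally uniformly in $x$) come from combining the continuity of $t\mapsto Z(t,s;\o)x$ with the c\`adl\`ag regularity of $t\mapsto N_t(\o)$ from $(S2)$, together with the just-established uniform-in-$x$ Lipschitz bound to upgrade pointwise to locally uniform statements.

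For measurability (iii), under the extra joint measurability hypothesis on $A$, I would obtain joint measurability of $(\o,x)\mapsto Z(t,s;\o)x$ by writing $Z$ as a limit of a measurable approximation scheme (e.g. Galerkin approximations, whose finite-dimensional solutions depend measurably on $(\o,x)$ by measurable dependence of solutions of ODE on parameters) and passing to the limit; measurability of $S$ then follows since $N_t,N_s$ are measurable. Finally, the cocycle property (v) under strict stationarity is the conceptual heart: assuming $A(t,v;\o)=A(0,v;\theta_t\o)$ and using $(S1)$, one shows that $v\mapsto A_\o(t,v) = A(t,v+N_t(\o);\o)$ satisfies the shift relation $A_\o(t,v)=A_{\theta_s\o}(t-s, v + (N_{t-s}(\theta_s\o)-N_t(\o))+\cdots)$ in a form compatible with the change of time variable; substituting into \eqref{ra_s:eqn:transformed_PDE_add} and using uniqueness shows $Z(t,s;\o)x=Z(t-s,0;\theta_s\o)(\cdots)$, which after reinserting the noise via $(S1)$ collapses to $S(t,s;\o)x=S(t-s,0;\theta_s\o)x$, and then $\vp(t,\o):=S(t,0;\o)$ is a continuous RDS. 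I expect this last cocycle computation to be the main obstacle: the strictly stationary increment relation $(S1)$ must be threaded exactly through the translated drift and through the shifted time integral so that the algebra of the two shifts $\theta_s$ and $t\mapsto t-s$ matches, and it is easy to drop a $N_0$ term; everything else is either standard variational PDE theory or a direct consequence of the $H$-Lipschitz estimate from $(A2)$.
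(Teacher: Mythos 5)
Your proposal follows essentially the same route as the paper: verify that the translated drift $A_\o(t,v)=A(t,v+N_t(\o);\o)$ satisfies the variational hypotheses $(H1)$--$(H4)$ of \cite{PR07} (with the cross term $\Vbk{A(t,v+N_t),N_t}$ controlled via $(A4)$ and Young's inequality), invoke the existence/uniqueness theorem pathwise, get the flow property from uniqueness, the $H$-Lipschitz estimate from $(A2)$ via Gronwall, the regularity in $t$ and $s$ from that estimate together with the c\`adl\`ag paths of $N$, and the cocycle identity from $(S1)$, strict stationarity and uniqueness. The paper's proof is simply the fully worked-out version of each of these steps, including the explicit bookkeeping of the $N_0(\t_s\o)$ term in the cocycle computation that you correctly flag as the delicate point.
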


As pointed out in the introduction, the variational approach to (S)PDE is based on a regularizing property of the drift $A$. This property is expressed via the coercivity assumption $(A3)$, namely 
\begin{equation}\label{ra_s:eqn:tmp-H3}
  2{ }_{V^*}\<A(t,v), v\>_V \le C_1(t) \|v\|_H^2 - c(t) \|v\|_V^\alpha + f(t). 
\end{equation}
Starting with an initial condition $x \in H$ the second term on the right hand side of \eqref{ra_s:eqn:tmp-H3} yields a control of the solution $X(t,s;\o)x$ in $L^\a_{loc}(\R;L^\a(\O;V))$. In particular $X(t,s;\o)x \in V$, $d$t$\otimes \P$ almost surely. If $V \subseteq H$ is compact, we can use this regularizing effect to prove compactness of the stochastic flow $S(t,s;\o)x$. Since our argument will be purely based on the regularizing effect due to the coercivity assumption, no further restrictions on the drift term have to be required.

\begin{enumerate}
  \item [$(A5)$] Assume that the embedding $V \subseteq H$ is compact.
\end{enumerate}

\begin{theorem}[Compactness]\label{ra_s:thm:compactness}
  Assume $(A1)$-$(A5)$ and $(S1)$-$(S2)$. Then $S(t,s;\o)x$ is a compact stochastic flow.
\end{theorem}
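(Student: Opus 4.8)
The plan is to argue pathwise. I fix $\o\in\O$ and $t>s$, take a bounded set $B\subseteq H$, and show that $S(t,s;\o)B$ is precompact in $H$. The only compactness available is the compact embedding $V\subseteq H$ from $(A5)$, combined with the regularizing effect of the coercivity $(A3)$. The conceptual difficulty is that this regularization only forces the trajectory into $V$ for \emph{almost every} time, so there is no reason for $S(t,s;\o)x$ to lie in $V$ at the fixed final time $t$. I will circumvent this by gaining $V$-regularity at a well-chosen intermediate time $\tau\in(s,t)$ and then transporting it to time $t$ via the continuity of the flow.

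First I would derive a pathwise a priori estimate, uniform over $x\in B$. Writing $W(t):=S(t,s;\o)x-N_t=Z(t,s;\o)(x-N_s)$, which solves the transformed equation \eqref{ra_s:eqn:transformed_PDE_add}, and applying the deterministic chain rule for the Gelfand triple to $t\mapsto\|W(t)\|_H^2$ (legitimate since the solution lies in $L^\alpha_{loc}([s,\infty);V)$ by Definition \ref{ra_s:def:soln_add}), I obtain
$$\frac12\frac{d}{dt}\|W(t)\|_H^2 = {}_{V^*}\langle A(t,S(t,s;\o)x),S(t,s;\o)x\rangle_V - {}_{V^*}\langle A(t,S(t,s;\o)x),N_t\rangle_V.$$
The first term is estimated from above by $(A3)$, and the second, after Young's inequality and the growth bound $(A4)$, is absorbed up to a fraction of the coercive term $c(t)\|S(t,s;\o)x\|_V^\alpha$; here I use that $N$, having c\`adl\`ag paths in $V$, is bounded in $V$ on $[s,t]$, and that the coefficients $C_1,C_2,c,f$ are integrable on $[s,t]$ for fixed $\o$. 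A Gronwall argument applied to $\|W\|_H^2$ then bounds $\sup_{r\in[s,t]}\|W(r)\|_H$ by a constant depending on $\|B\|_H,s,t,\o$ but not on the individual $x\in B$, and reinserting this bound yields
$$\int_s^t c(r)\,\|S(r,s;\o)x\|_V^\alpha\,dr \le M=M(B,s,t,\o),\qquad \forall\, x\in B.$$

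Given a sequence $(x_n)\subseteq B$, I set $g_n(r):=c(r)\|S(r,s;\o)x_n\|_V^\alpha\ge 0$, so $\int_s^t g_n\,dr\le M$ for all $n$. By Fatou's lemma $\int_s^t \liminf_n g_n\,dr\le M<\infty$, hence $\liminf_n g_n(r)<\infty$ for almost every $r$; fixing such a $\tau\in(s,t)$ and using $c(\tau)>0$ I extract a subsequence along which $\|S(\tau,s;\o)x_{n_k}\|_V$ stays bounded. By the compact embedding $(A5)$ the set $\{S(\tau,s;\o)x_{n_k}\}$ is then precompact in $H$, so a further subsequence converges in $H$ to some $w^*$. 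Finally, the flow property $S(t,s;\o)x_{n_k}=S(t,\tau;\o)\,S(\tau,s;\o)x_{n_k}$ together with the $H$-continuity of $x\mapsto S(t,\tau;\o)x$ from Theorem \ref{ra_s:thm:generation_add} gives $S(t,s;\o)x_{n_k}\to S(t,\tau;\o)w^*$ in $H$. Thus every sequence in $S(t,s;\o)B$ has an $H$-convergent subsequence, which is the claim. I expect the main obstacle to be the a priori estimate, specifically absorbing the noise term ${}_{V^*}\langle A(t,S(t,s;\o)x),N_t\rangle_V$ into the coercive term without losing uniformity in $x\in B$ (where the singular range $\alpha\in(1,2)$ enters); by contrast, the passage from almost-every-time regularity to the fixed endpoint $t$ via the intermediate time $\tau$ and continuity is the conceptual heart but technically light.
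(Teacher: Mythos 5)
Your proof is correct, and it reaches the compactness by a genuinely lighter route than the paper in the key extraction step. Both arguments share the same skeleton: a uniform bound $\int_s^t\|Z(r,s;\o)b_n\|_V^\a\,dr\le C$ over the bounded set (the paper gets this from the transformed coercivity \eqref{ra_s:eqn:transf_coerc}, you re-derive it directly from $(A3)$, $(A4)$ and Young's inequality -- essentially the same computation as the paper's verification of $(H3)$ for $A_\o$), then passage to a single intermediate time at which a subsequence converges in $H$, and finally the flow property together with the Lipschitz continuity of $x\mapsto S(t,\tau;\o)x$ to transport convergence to the endpoint $t$. Where you differ is in how the intermediate time is produced: the paper additionally bounds $\frac{d}{dr}Z(r,s;\o)b_n=A_\o(r,Z(r,s;\o)b_n)$ in $L^{\frac{\a}{\a-1}}([s,t];V^*)$ via the growth condition and invokes the Aubin--Lions compactness theorem to get precompactness of the trajectories in $L^\a([s,t];H)$, whence $H$-convergence at almost every time along a subsequence; you instead apply Fatou's lemma to $g_n(r)=c(r)\|S(r,s;\o)x_n\|_V^\a$ to find a single $\tau$ with $\liminf_n g_n(\tau)<\infty$, extract a subsequence bounded in $V$ at that $\tau$, and use the compact embedding $V\subseteq H$ directly. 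Your version avoids both the Aubin--Lions lemma and any use of $(A4)$ in this step, at the (irrelevant here) cost of not obtaining compactness of the full trajectories in $L^\a([s,t];H)$. The one point to make explicit when writing this up is that the exceptional null sets (where $S(\cdot,s;\o)x_n\notin V$ and where $\liminf_n g_n=\infty$) form a countable union of null sets, so a common admissible $\tau$ exists; and, as in the paper, the local integrability of the coefficients on $[s,t]$ rests on the standing right-continuity assumptions.
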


In order to prove the existence of a random attractor we need to assume a growth condition on the paths of the noise.
\begin{enumerate}
  \item [$(S3)$] (Growth) There is a subset $\O_0 \subseteq \O$ of full $\P$-measure such that $\|N_t(\o)\|_V = o(|t|^{\frac{1}{2-\a}})$ for $t \to -\infty$ and all $\o \in \O_0$.
\end{enumerate}

Let $\mcD^\a$ denote the system of all families $\{D(t,\o)\}_{t\in\R,\ \o \in \O}$ of sets of subpolynomial growth of order $\frac{1}{2-\a}$ (cf.\ Definition \ref{ra_s:def:universes}) and let $\mcD^b$ be the system of all deterministic bounded sets. Using comparison Lemmata proven in Section \ref{ra_s:ssec:comp} we obtain
\begin{proposition}[Bounded Absorption]\label{ra_s:prop:add_bounded_absorption}
  Assume that $(A1)$-$(A4)$ with $C_1 \equiv 0$ and $c,C_2$ independent of time $t$, $f(\cdot,\o) = o(|\cdot|^\frac{\a}{2-\a})$  c\`adl\`ag in $t$ and that $(S1)$-$(S3)$ are satisfied. Then there is a right lower-semicontinuous family of sets $\{F(t,\o)\}_{t \in \R, \o \in \O} \in \mcD^\a$ that $\mcD^\a$-absorbs the stochastic flow $S(t,s;\o)x$. If $f$ is measurable in $\o$ then $F$ is a random closed set.
\end{proposition}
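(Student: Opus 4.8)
The plan is to reduce \eqref{ra_s:eqn:SPDE_add} to the transformed random PDE \eqref{ra_s:eqn:transformed_PDE_add}, derive a scalar differential inequality of \emph{singular} type for the $H$-norm of the solution, and then read off the absorbing family from an a priori estimate for this singular ODE. First I would work with $z(r):=Z(r,s;\o)(x-N_s(\o))$, which solves $\dot z=A(r,z+N_r;\o)$, and test with $z$ itself. Writing $v:=z+N_r\in V$ and using the chain rule $\tfrac12\tfrac{d}{dr}\|z\|_H^2={}_{V^*}\langle A(r,v),v\rangle_V-{}_{V^*}\langle A(r,v),N_r\rangle_V$, I would bound the first term by $(A3)$ with $C_1\equiv0$ and the second by $(A4)$ together with Young's inequality for the conjugate exponents $\tfrac{\alpha}{\alpha-1},\alpha$, choosing the Young parameter small enough to reabsorb a fraction of $\|v\|_V^\alpha$. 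After lowering $\|v\|_V^\alpha=\|z+N_r\|_V^\alpha$ against $\|z\|_V^\alpha$ (via $\|z\|_V\le\|z+N_r\|_V+\|N_r\|_V$) and using the continuous embedding $V\subseteq H$, this yields, for $y(r):=\|z(r)\|_H^2$,
\[
  \dot y(r)\ \le\ -\tilde c\,y(r)^{\alpha/2}+g(r),\qquad g(r)=c_1 f(r,\o)+c_2\|N_r(\o)\|_V^\alpha,
\]
with $\tilde c>0$ and $\alpha/2\in(\tfrac12,1)$. By the hypotheses $f(\cdot,\o)=o(|\cdot|^{\alpha/(2-\alpha)})$ and $(S3)$ one gets $g(r)=o(|r|^{\alpha/(2-\alpha)})$ as $r\to-\infty$.

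The heart of the argument is the analysis of this singular ODE, which I expect to be packaged by the comparison Lemmata of Section \ref{ra_s:ssec:comp}. A scalar comparison gives $y(t)\le w(t)$, where $w$ solves $\dot w=-\tilde c\,w^{\alpha/2}+g_+(r)$ with $w(s)=y(s)$. The decisive feature of the singular exponent $\alpha/2<1$ is finite-time extinction: for the homogeneous equation $r\mapsto w(r)^{1-\alpha/2}$ decreases at the constant rate $(1-\tfrac\alpha2)\tilde c$, so data $y(s)=y_0$ is extinguished after a time $\sim y_0^{(2-\alpha)/2}$. For $D\in\mcD^\a$ the initial value satisfies $y_0\le(\|D(s,\o)\|_H+c\,\|N_s(\o)\|_V)^2=o(|s|^{2/(2-\alpha)})$, by the defining growth of $\mcD^\a$ and $(S3)$; since $\tfrac{2}{2-\alpha}\cdot\tfrac{2-\alpha}{2}=1$, the extinction time is $o(|s|)$, negligible against the available pullback time $t-s\sim|s|$. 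Consequently, for $s$ below an absorption time $s_0=s_0(t,\o,D)$ the memory of $y_0$ is lost and $w(t)$ is controlled, uniformly in $D$, by the forcing threshold: one obtains $y(t)\le\Lambda(t,\o)$, where $\Lambda(t,\o)$ is built from $\sup(2g_+/\tilde c)^{2/\alpha}$ over a $(t,\o)$-dependent window, and in particular $\Lambda(t,\o)=o(|t|^{2/(2-\alpha)})$.

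With this in hand I would set $R(t,\o):=\sqrt{\Lambda(t,\o)}+\|N_t(\o)\|_H$ and define $F(t,\o):=\{h\in H:\|h\|_H\le R(t,\o)\}$. Undoing the transformation, $S(t,s;\o)x=z(t)+N_t(\o)$ gives $\|S(t,s;\o)x\|_H\le\sqrt{y(t)}+\|N_t(\o)\|_H\le R(t,\o)$ for all $x\in D(s,\o)$ and $s\le s_0$, i.e.\ $F$ is $\mcD^\a$-absorbing. Since $\sqrt{\Lambda(t,\o)}=o(|t|^{1/(2-\alpha)})$ and, by $(S3)$, $\|N_t(\o)\|_H=o(|t|^{1/(2-\alpha)})$, the radius is $o(|t|^{1/(2-\alpha)})$, so $F\in\mcD^\a$. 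Right lower-semicontinuity follows because the radius $R(\cdot,\o)$ is right-continuous (the data $f$ and $N$, hence $\Lambda$ and $\|N_\cdot\|_H$, are c\`adl\`ag) and a closed ball with right-continuous radius is right lower-semicontinuous; if in addition $f$ is measurable in $\o$, then $R(t,\cdot)$ is measurable and $F$ is a random closed set.

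The main obstacle is the a priori estimate for the singular ODE. Unlike the degenerate case, the dissipation here \emph{degenerates for large states}: the relative rate $\dot y/y\le-\tilde c\,y^{\alpha/2-1}$ tends to $0$ as $y\to\infty$, so large initial data cannot be contracted at a rate uniform in $y_0$, and one is forced to rely on the absolute extinction mechanism. The delicate point is to combine this with a forcing $g$ that is sign-changing and \emph{unbounded} as $r\to-\infty$: one cannot bound $g$ by its supremum over $[s,t]$ without destroying the subpolynomial order, so the estimate must track the interplay between the instantaneous threshold $(g_+(r)/\tilde c)^{2/\alpha}$ and the finite relaxation time, and must verify that the precise calibration of the exponents in $(S3)$ and in the growth hypothesis on $f$ makes the extinction time asymptotically negligible against $|t-s|$ while keeping $\Lambda(t,\o)$ of order $o(|t|^{2/(2-\alpha)})$.
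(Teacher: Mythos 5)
Your proposal follows essentially the same route as the paper: transform to the random PDE, derive the singular differential inequality $\dot y\le-\tilde c\,y^{\alpha/2}+g$ for $y=\|Z\|_H^2$ with $g=o(|r|^{\alpha/(2-\alpha)})$, exploit finite-time extinction of the singular ODE to show that initial data of size $o(|s|^{2/(2-\alpha)})$ is forgotten in time $o(|s|)$, and bound $y(t)$ by a forcing threshold $\sup(2g/\tilde c)^{2/\alpha}$ over a $(t,\omega)$-window, which is exactly the content of the paper's a priori Lemma \ref{ra_s:lemma:comp_superlinear}. The construction of the ball $F(t,\omega)$, the verification that it lies in $\mcD^\alpha$, and the right lower-semicontinuity and measurability arguments also match the paper's proof.
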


Combining bounded absorption and compactness of the stochastic flow we conclude
\begin{theorem}[Existence of Random Attractors]\label{ra_s:thm:ra_add}
  Assume that $(A1)$-$(A5)$ with $C_1 \equiv 0$ and $c,C_2$ independent of time $t$, $f(\cdot,\o) = o(|\cdot|^\frac{\a}{2-\a})$ c\`adl\`ag in $t$ and that $(S1)$-$(S3)$ are satisfied. Then $S(t,s;\o)x$ admits a random $\mcD^\a$-attractor $\mcA^\a \in \mcD^\a$. If $A$ and $f$ are measurable then so is $\mcA^\a$. 

  If $A$ is strictly stationary then there is a strictly stationary $\mcD^b$-random attractor $\mcA^b$ that is measurable if $A$ is.
\end{theorem}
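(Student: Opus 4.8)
The plan is to verify the hypotheses of Theorem \ref{ra_s:thm:suff_cond_attr} for the system $\mcD^\a$ and then to locate the resulting attractor inside $\mcD^\a$. By Theorem \ref{ra_s:thm:generation_add}, $S(t,s;\o)x$ is a continuous stochastic flow which is c\`adl\`ag in $t$ and right-continuous in $s$ locally uniformly in $x$, and by Proposition \ref{ra_s:prop:add_bounded_absorption} there is a right lower-semicontinuous $F \in \mcD^\a$ that $\mcD^\a$-absorbs $S$. First I would upgrade absorption plus compactness into asymptotic compactness by setting
\[ K(t,\o) := \overline{S(t,t-1;\o)\,F(t-1,\o)}. \]
Since $F \in \mcD^\a$, the set $F(t-1,\o)$ is bounded in $H$, so Theorem \ref{ra_s:thm:compactness} makes $S(t,t-1;\o)F(t-1,\o)$ precompact and hence $K(t,\o)$ compact. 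Applying the flow property with $r=t-1$ and absorption at time $t-1$ gives, for every $D \in \mcD^\a$ and all $s$ small enough,
\[ S(t,s;\o)D(s,\o) = S(t,t-1;\o)\,S(t-1,s;\o)D(s,\o) \subseteq S(t,t-1;\o)F(t-1,\o) \subseteq K(t,\o), \]
so $K$ is even $\mcD^\a$-absorbing, in particular $\mcD^\a$-attracting. Thus $S$ is $\mcD^\a$-asymptotically compact in the sense of Definition \ref{ra_s:def:cmp_flow}, and Theorem \ref{ra_s:thm:suff_cond_attr} yields a random $\mcD^\a$-attractor $\mcA^\a$ with $\mcA^\a(t,\o) \subseteq K(t,\o)$.

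To see $\mcA^\a \in \mcD^\a$ it suffices, since $\mcA^\a \subseteq K$ and $\mcD^\a$ is stable under passing to subsets, to check $K \in \mcD^\a$. Here I would invoke the a priori $H$-estimate for singular equations underlying Proposition \ref{ra_s:prop:add_bounded_absorption}: because $C_1 \equiv 0$ the drift is dissipative in the $H$-norm, so over the fixed unit interval $[t-1,t]$ the quantity $\|K(t,\o)\|_H = \sup_{x \in F(t-1,\o)} \|S(t,t-1;\o)x\|_H$ is controlled by $\|F(t-1,\o)\|_H$, by $\sup_{r \in [t-1,t]}\|N_r(\o)\|_V$ and by $f$ on $[t-1,t]$. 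By $F \in \mcD^\a$, by $(S3)$, and by the growth assumption $f(\cdot,\o)=o(|\cdot|^{\a/(2-\a)})$, each of these is $o(|t|^{1/(2-\a)})$, whence $\|K(t,\o)\|_H = o(|t|^{1/(2-\a)})$ and $K \in \mcD^\a$ by Definition \ref{ra_s:def:universes}.

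For the measurability claim, if $A$ and $f$ are measurable then $S$ is a measurable stochastic flow by Theorem \ref{ra_s:thm:generation_add} and $F$ is a random closed set by Proposition \ref{ra_s:prop:add_bounded_absorption}; together with the right-continuity of $S$ and the right lower-semicontinuity of $F$ this makes $K$ a right lower-semicontinuous random closed set in $\mcD^\a$. I would then apply part (ii) of Theorem \ref{ra_s:thm:suff_cond_attr} to conclude $\mcA^\a(t,\o) = \O(K,t;\o)$, hence $\mcA^\a$ is measurable. Finally, in the strictly stationary case Theorem \ref{ra_s:thm:generation_add} makes $S$ a cocycle, and since $\mcD^b \subseteq \mcD^\a$ (deterministic bounded sets have constant norm, hence subpolynomial growth of order $\tfrac{1}{2-\a}$) one only needs a strictly stationary attracting family. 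Specializing the absorbing construction to $\mcD^b$ and to a strictly stationary drift produces a strictly stationary, right lower-semicontinuous $\mcD^b$-absorbing family $F^b$; by the cocycle property $K^b(t,\o) := \overline{S(t,t-1;\o)F^b(t-1,\o)}$ satisfies $K^b(t,\o)=K^b(0,\t_t\o)$ and is compact and right lower-semicontinuous, so part (ii) of Theorem \ref{ra_s:thm:suff_cond_attr} in the cocycle case gives a strictly stationary $\mcD^b$-random attractor $\mcA^b$, measurable whenever $A$ is.

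The main obstacle is the verification $K \in \mcD^\a$ in the second step: this is exactly where the new a priori estimates for singular equations and the dissipativity hypothesis $C_1 \equiv 0$ are indispensable, since without control of the $H$-norm inflation over a unit time step the image $S(t,t-1;\o)F(t-1,\o)$ could fail to preserve the subpolynomial growth order $\tfrac{1}{2-\a}$. A secondary technical point is establishing the strict stationarity of the auxiliary absorbing family $F^b$ in the cocycle case, which relies on the stationarity structure of both the drift and the noise.
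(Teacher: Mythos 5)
Your treatment of the main claim is essentially the paper's own proof: the same absorbing set $F$ from Proposition \ref{ra_s:prop:add_bounded_absorption}, the same compact set $K(t,\o)=\overline{S(t,t-1;\o)F(t-1,\o)}$, the same verification that $K$ is $\mcD^\a$-absorbing and lies in $\mcD^\a$, and the same appeal to Theorem \ref{ra_s:thm:suff_cond_attr} (with part (ii) for measurability). That part is fine.

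The final paragraph, on the strictly stationary $\mcD^b$-attractor, has a genuine gap. You invoke part (ii) of Theorem \ref{ra_s:thm:suff_cond_attr} with the system $\mcD^b$, but condition (ii) requires $K^b\in\mcD$; since $\mcD^b$ consists of \emph{deterministic} bounded sets, this would force $K^b(t,\o)=\overline{S(t,t-1;\o)F^b(t-1,\o)}$ to be deterministic, which it is not. Moreover, your premise that ``specializing the absorbing construction'' yields a \emph{strictly stationary} family $F^b$ is unjustified: the radius $\td R(t,\o)$ built in Proposition \ref{ra_s:prop:add_bounded_absorption} depends on $\|N_r(\o)\|_V$ over past intervals, and $N$ has only stationary \emph{increments}, not stationary values, so $F(t,\o)\neq F(0,\t_t\o)$ in general. (One can repair this for deterministic initial sets by redefining $F^b(t,\o):=B(0,\td R(0,\t_t\o))$ and using the cocycle property, but that is an extra argument you have not made, and it still does not fix the $K^b\in\mcD^b$ obstruction.) The paper's route avoids both problems: it verifies condition (i) of Theorem \ref{ra_s:thm:suff_cond_attr} for $\mcD^b$ with the countable family $\mcD_0=\{B(0,n)\}_{n\in\N}$ of deterministic closed balls, which are trivially right lower-semicontinuous, closed, and strictly stationary; the last assertion of Theorem \ref{ra_s:thm:suff_cond_attr} then gives strict stationarity of $\mcA^b$ directly, since each $\O(B(0,n),t;\o)$ is strictly stationary by Lemma \ref{ra_s:lemma:suff_cond_attr}(ii). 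You should replace your final step by this argument.
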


We will now introduce a method that allows to prove that the random attractor consists of a single point if the RDS is monotone, contractive and has an associated weak-$*$ mean ergodic Markov semigroup (cf.\ Definition \ref{ra_s:def:weak_erg} below).

We denote by $\mcB(H)$ the set of all Borel measurable subsets of $H$, by $B_b(H)$ (resp.\ $C_b(H)$) the Banach space of all bounded, measurable (resp.\ continuous) functions on $H$ equipped with the supremum norm and by $Lip_b(H)$ the space of all bounded Lipschitz continuous functions on $H$. By $\mcM_1$ we denote the set of all Borel probability measures on $H$. For a semigroup $P_t$ on $B_b(H)$ we define the dual semigroup $P_t^*$ on $\mcM_1$ by $P_t^* \mu(B) := \int_H P_t \mathbbm{1}_B d\mu$, for $B \in \mcB(H)$. A measure $\mu \in \mcM_1$ is said to be invariant for the semigroup $P_t$ if $P_t^* \mu = \mu$, for all $t \ge 0$. For $T > 0$ and $\mu \in \mcM_1$ we define
  \[ Q^T\mu := \frac{1}{T} \int_0^T P_r^*\mu dr \]
and we write $Q^T(x,\cdot)$ for $\mu = \d_x$. Recall
\begin{definition}\label{ra_s:def:weak_erg}
  A semigroup $P_t$ is called weak-$*$ mean ergodic if there exists a measure $\mu \in \mcM_1$ such that
   \[ \text{w-lim}_{T \to \infty} Q^T\nu = \mu, \]
  for all $\nu \in \mcM_1$ where w-lim is the limit with respect to weak convergence on $\mcM_1$. 
\end{definition}

If an RDS preserves a solid partial order structure on the state space $H$ the strong mixing property of the associated Markov semigroup $P_tf(x) = \E[f(\vp(t,\cdot)x)]$ implies the existence of a weak random attractor consisting of a single random point (cf.\ \cite{CS04}). This result is based on the assumption that the cone of nonnegative elements $H_+$ is solid, which is not satisfied by the cone of nonnegative functions $L^p_+$ in $L^p$ spaces. Assuming only the existence of upper bounds with respect to $H_+$ we prove that the pullback random attractor consists of a single random fixed point if the RDS is contractive and monotone. Moreover, we only assume weak-$*$ \textit{mean} ergodicity of the Markovian semigroup not the strong mixing property. We will require 
\begin{enumerate}
  \item [$(H')$] Assume that there is a cone $H_+ \subseteq H$ with induced partial order structure ''$\le$`` and that there is a dense subset $S \subseteq H$ such that $\vp$ is monotone with respect to ''$\le$`` on $S$. Further assume the existence of upper bounds with respect to ''$\le$`` on $S$, i.e.\ that for all $x,y \in S$ there is an upper bound $z \in S$ satisfying $x,y \le z$.  
\end{enumerate}
While the cone of nonnegative functions $L^p_+$ in $L^p$ spaces is not solid, the existence of upper bounds as required in $(H')$ is satisfied by $L^p_+$. Moreover, for  $H = (H_0^1(\mcO))^*$ the existence of an upper bound $z \in H$ for any two $x,y \in H$ is not clear, while it is obvious as soon as $x, y \in H \cap L^1(\mcO)$. 

\begin{theorem}\label{ra_s:thm:singleton_RA_general}
  Assume $(H')$ and let $\vp$ be a contractive RDS on $H$ such that $P_tf(x):= \E f(\vp(t,\cdot)x)$ is a weak-$*$ mean ergodic Markovian semigroup on $B_b(H)$. 
  Then for any random compact set $K(\o) \subseteq H$
    \[\text{diam}(\vp(t,\o)K(\o)) \to 0,\ \P-\text{a.s.} \]
  for $t \to \infty$. In particular, each invariant random compact set $K$ (i.e.\ $\vp(t,\o)K(\o)=K(\t_t\o)$) consists of a single random point.
\end{theorem}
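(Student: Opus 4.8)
The plan is to prove the diameter bound first and then read off the single-point statement by a stationarity argument. Throughout I write $\rho_{x,y}(t,\o):=\|\vp(t,\o)x-\vp(t,\o)y\|_H$ and use that contractivity makes $t\mapsto\rho_{x,y}(t,\o)$ non-increasing; in particular each $\vp(t,\o)$ is $1$-Lipschitz. First I would reduce the diameter statement to pairwise synchronization along a fixed countable set. Since $\mathrm{diam}(\vp(t,\o)K(\o))=\sup_{x,y\in K(\o)}\rho_{x,y}(t,\o)$ is a supremum of non-increasing functions it is itself non-increasing in $t$, hence converges, and the task is to show the limit is $0$. Fix a countable dense subset $S_0\subseteq S$. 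Given $\varepsilon>0$, for a.e.\ $\o$ the compactness of $K(\o)$ lets me cover it by finitely many balls $B(x_i,\varepsilon)$ with centres $x_i\in S_0$; the $1$-Lipschitz property then gives $\rho_{x,y}(t,\o)\le 2\varepsilon+\max_{i,j}\rho_{x_i,x_j}(t,\o)$ for all $x,y\in K(\o)$. Thus it suffices to prove that $\rho_{x,y}(t,\o)\to0$ $\P$-a.s.\ (simultaneously for all countably many pairs) for every fixed pair $x,y\in S_0$. Using $(H')$ I then pass to comparable pairs: for $x,y\in S_0\subseteq S$ choose an upper bound $z\in S$ with $x,y\le z$, so by the triangle inequality it is enough to show $\rho_{x,z}(t,\o)\to0$ a.s.\ whenever $x\le z$ in $S$.

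For the comparable case, monotonicity gives $\vp(t,\o)x\le\vp(t,\o)z$, so $\vp(t,\o)z-\vp(t,\o)x\in H_+$ for all $t$, while contractivity gives that $\rho(t,\o):=\rho_{x,z}(t,\o)$ decreases to a limit $\rho_\infty(\o)\ge0$; the goal is $\rho_\infty=0$ a.s. Weak-$*$ mean ergodicity enters through order-preserving test functions. For any monotone $f\in Lip_b(H)$ the comparability $\vp(t,\o)x\le\vp(t,\o)z$ yields $P_tf(z)-P_tf(x)=\E\!\left[f(\vp(t,\cdot)z)-f(\vp(t,\cdot)x)\right]\ge0$, and since $Q^T\d_x\to\mu$ and $Q^T\d_z\to\mu$ to the \emph{same} limit $\mu$ I obtain $\frac1T\int_0^T\big(P_tf(z)-P_tf(x)\big)\,dt\to0$. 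Thus each such nonnegative functional has vanishing Cesàro mean. To exploit this I would fix a countable family $\{f_k\}\subseteq Lip_b(H)$ of monotone functions separating the strict order — concretely $f_k(w)=\arctan(m\langle w,\ell\rangle)$ with $\ell$ ranging over a countable dense subset of the dual cone $H_+^{*}$ and $m\in\mathbb N$; pointedness $H_+\cap(-H_+)=\{0\}$ guarantees that $H_+^{*}$ separates the nonzero elements of $H_+$, so these $f_k$ detect the distance between comparable points.

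The heart of the argument, and the step I expect to be hardest, is upgrading ``vanishing Cesàro mean of the bounded functionals $P_tf_k(z)-P_tf_k(x)$'' to ``$\rho_\infty=0$''. The obstacle is that bounded test functions cannot see the separation of two points that escape to infinity, so a compactness input is indispensable. I would supply it from tightness of $\mu$ (a Borel probability measure on the Polish space $H$, hence tight): given $\eta>0$ pick a compact $C\subseteq H$ with $\mu(C)>1-\eta$; by the portmanteau inequality for the compact closed neighbourhood $C'=\{w:d(w,C)\le\delta\}$, both $\frac1T\int_0^T\P(\vp(t,\cdot)x\in C')\,dt$ and its analogue for $z$ exceed $1-\eta$ for large $T$. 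Suppose $\P(\rho_\infty>\delta)=p>0$ for some $\delta>0$ and choose $\eta<p/6$. A density argument produces times $t_n\to\infty$ along which simultaneously $\sum_{j}\big(P_{t_n}f_{k_j}(z)-P_{t_n}f_{k_j}(x)\big)\to0$ for a finite subfamily and $\P(\vp(t_n)x,\vp(t_n)z\in C')\ge1-3\eta$. On the event where both iterates lie in $C'$ and $\rho(t_n)\ge\rho_\infty>\delta$, the compactness of the comparable pairs in $C'$ at separation $\ge\delta$ together with the separating property of $\{f_k\}$ forces some $f_{k_j}$ from a finite subfamily to satisfy $f_{k_j}(\vp(t_n)z)-f_{k_j}(\vp(t_n)x)\ge c(\delta)>0$ there; taking expectations contradicts the vanishing of the finite sum. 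Hence $\rho_\infty\le\delta$ a.s.\ for every $\delta>0$, i.e.\ $\rho_\infty=0$, which by the reductions above yields the diameter statement.

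Finally, the single-point claim for an invariant random compact set $K$ follows by stationarity. Invariance $\vp(t,\o)K(\o)=K(\t_t\o)$ turns the diameter bound into $\mathrm{diam}(K(\t_t\o))\to0$ a.s.; since $\t_t$ preserves $\P$, the non-negative variables $\mathrm{diam}(K(\t_t\cdot))\wedge1$ all share the law of $\mathrm{diam}(K(\cdot))\wedge1$, are non-increasing in $t$ and tend to $0$, so dominated convergence gives $\E[\mathrm{diam}(K)\wedge1]=0$ and hence $\mathrm{diam}(K(\o))=0$ a.s., i.e.\ $K$ reduces to a single random point.
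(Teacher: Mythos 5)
Your overall architecture matches the paper's: reduce to finitely/countably many reference points in $S$, pass to comparable pairs via upper bounds and monotonicity, and use weak-$*$ mean ergodicity plus contractivity to synchronize a comparable pair. Your reduction to countably many pairs in a dense set $S_0$ (instead of the paper's deterministic compact approximation of $K(\o)$ plus a finite $\ve$-net) is fine, as is the final stationarity argument. The pairwise synchronization step is exactly the content of the paper's Lemma \ref{ra_s:lemma:mean_conv} (which follows \cite[Proposition 1]{CS04}), and this is where your version breaks.

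The concrete gap is the compactness input. The set $C'=\{w: d(w,C)\le\d\}$ is \emph{not} compact when $H$ is infinite dimensional: it contains a closed ball of radius $\d$ around every point of $C$. Consequently the set of comparable pairs $\{(a,b)\in C'\times C': b-a\in H_+,\ \|b-a\|_H\ge\d\}$ is not compact either, and the extraction of a finite subfamily $\{f_{k_j}\}$ with a uniform lower bound $c(\d)>0$ fails: one can have a sequence of comparable pairs in $C'$ at separation $\ge\d$ along which every functional from your countable family degenerates. Since the entire contradiction argument rests on this uniform $c(\d)$, the proof does not close as written. The repair is to obtain the compact set not from tightness of the limit $\mu$ via portmanteau on a neighbourhood, but from Prokhorov's theorem applied to the weakly convergent family of Ces\`aro averages $Q^T\d_x$, $Q^T\d_z$ themselves: this gives a genuinely compact $\td K_\ve\subseteq H$ with $\frac1T\int_0^T\P[\vp(t,\cdot)x\in\td K_\ve]\,dt\ge 1-\ve$ for all $T$, and one then works on the compact set $(\td K_\ve-\td K_\ve)\cap H_+$, on which a single strictly positive functional $l$ satisfies $\|v\|_H\le\frac{\d}{2}+C\,l(v)$ (this is \cite[Lemma 1]{CS04}); combined with the vanishing Ces\`aro mean of $\E\,l(\vp(t,\cdot)z-\vp(t,\cdot)x)$ (after truncation) and the monotonicity in $t$ of $\|\vp(t,\o)z-\vp(t,\o)x\|_H$, this yields the a.s.\ convergence you need. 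This is precisely the route of Lemma \ref{ra_s:lemma:mean_conv} in the paper.
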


\begin{corollary}\label{ra_s:cor:strongly_mixing}
  Under the assumption of Theorem \ref{ra_s:thm:singleton_RA_general} the semigroup $P_t$ is strongly mixing in the sense that for each $\nu \in \mcM_1$ we have $P_t^*\nu \to \mu$ weakly for $t \to \infty$.
\end{corollary}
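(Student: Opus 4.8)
The plan is to reduce weak convergence to a statement about bounded Lipschitz functions and then combine the forgetting property supplied by Theorem \ref{ra_s:thm:singleton_RA_general} with the invariance of the mean ergodic limit $\mu$. Since $H$ is a separable metric space, the bounded Lipschitz functions are convergence determining, so it suffices to show
\[ \int_H P_t f\, d\nu \to \int_H f\, d\mu \quad \text{for every } f \in Lip_b(H). \]
Because $P_t^*\nu$ is obtained by integrating $P_t f(x) = \E f(\vp(t,\cdot)x)$ against $\nu$, and $P_t f(x)$ is bounded by $\|f\|_\infty$, a dominated convergence argument against $\nu$ shows that it is enough to prove the pointwise statement $P_t f(x) \to \int_H f\, d\mu$ for each fixed $x \in H$.

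First I would establish an asymptotic stability (forgetting) property. Fix $x,z \in H$ and apply Theorem \ref{ra_s:thm:singleton_RA_general} to the deterministic --- hence random --- compact set $K \equiv \{x,z\}$, which yields $\|\vp(t,\o)x - \vp(t,\o)z\|_H = \text{diam}(\vp(t,\o)K) \to 0$ for $\P$-a.e.\ $\o$ as $t \to \infty$. For $f \in Lip_b(H)$ with Lipschitz constant $L$ this gives
\[ |P_t f(x) - P_t f(z)| \le L\, \E\|\vp(t,\cdot)x - \vp(t,\cdot)z\|_H, \]
and since $\vp$ is contractive the integrand is dominated by the constant $L\|x-z\|_H$; dominated convergence then yields $|P_t f(x) - P_t f(z)| \to 0$ for all $x,z$.

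The decisive step is to exploit the invariance of $\mu$. Contractivity forces each $\vp(t,\o)$ to be $1$-Lipschitz, so the RDS is continuous and $P_t$ is Feller; together with weak-$*$ mean ergodicity this makes $\mu$ an invariant measure (apply the weakly continuous map $P_s^*$ to $Q^T\nu \to \mu$ and compare with $P_s^* Q^T\nu \to \mu$). Consequently $\int_H P_t f\, d\mu = \int_H f\, d\mu$ for every $t \ge 0$, and therefore
\[ P_t f(x) - \int_H f\, d\mu = \int_H \bigl(P_t f(x) - P_t f(z)\bigr)\, \mu(dz). \]
By the forgetting property the integrand tends to $0$ for each $z$ and is bounded by $2\|f\|_\infty$, so dominated convergence against the probability measure $\mu$ gives $P_t f(x) \to \int_H f\, d\mu$, completing the argument after the reduction above. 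I expect the main subtlety to be precisely here: the forgetting property is only \emph{pointwise} in the initial datum, not uniform over $H$, so one cannot argue that $P_t f$ becomes uniformly constant, nor can one naively upgrade the mean (Ces\`aro) ergodic convergence to genuine convergence by a Tauberian estimate. The invariance identity $\int_H P_t f\, d\mu = \int_H f\, d\mu$ is the device that circumvents both difficulties, since it replaces the unknown limit by a quantity that is constant in $t$.
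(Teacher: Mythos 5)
Your proof is correct and follows essentially the same route as the paper: reduce to $\nu=\delta_x$ and Lipschitz test functions, write $P_tf(x)-\mu(f)=\int_H(P_tf(x)-P_tf(y))\,d\mu(y)$ via invariance of $\mu$, and conclude by dominated convergence using the a.s.\ forgetting from Theorem \ref{ra_s:thm:singleton_RA_general} together with the contractivity bound $\|\vp(t,\cdot)x-\vp(t,\cdot)y\|_H\le\|x-y\|_H$. The only difference is that you spell out the invariance of $\mu$ (via the Feller property and $P_s^*Q^T\nu\to\mu$), which the paper uses implicitly in its first equality.
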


In order to apply Theorem \ref{ra_s:thm:singleton_RA_general} to concrete applications we need a criterion for weak-$*$ mean ergodicity for singular SPDE. We will use the following assumptions in order to apply a result given in \cite{LT11}:
\begin{enumerate}
  \item [$(A')$] Assume that $A$ is independent of $(t,\o)$, $(A1)$-$(A5)$ are satisfied with $C_1 \equiv 0$ and $f$, $c$, $C_2$ being positive constants and that there exist $c > 0$, $\d \in (0,\a)$ such that
    \[ 2\Vbk{A(v_1)-A(v_2),v_1-v_2} \le -c \frac{\|v_1-v_2\|_H^2}{\|v_1\|_V^\d + \|v_2\|_V^\d},\]
  for all $v_1, v_2 \in V$.
  \item [$(S')$] Let $W_t=N_t$ be a $V$-valued Wiener process.
\end{enumerate}

%

\begin{corollary}[Singleton Random Attractors]\label{ra_s:thm:singleton_RA_add}
  Assume $(A')$, $(S')$, $(H')$. Then the random $\mcD^b$-attractor $\mcA^b$ obtained in Theorem \ref{ra_s:thm:ra_add} consists of a single random, fixed point, i.e.
    \[ \mcA(\o) = \{\eta(\o)\} \]
  and $\vp_t(\o)\eta(\o) = \eta(\t_t\o)$.
\end{corollary}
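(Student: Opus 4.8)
The plan is to deduce the corollary from Theorem \ref{ra_s:thm:singleton_RA_general}: the bulk of the work is verifying that its three hypotheses — that $\vp$ is a contractive RDS and that $P_t$ is weak-$*$ mean ergodic, in addition to $(H')$ which is assumed outright — all hold under $(A')$, $(S')$, and then identifying $\mcA^b$ as an invariant random compact set to which the theorem applies.

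First I would check that the attractor $\mcA^b$ is actually available and has the right invariance. Under $(A')$ the drift is independent of $(t,\o)$, hence strictly stationary, and satisfies $(A1)$-$(A5)$ with $C_1 \equiv 0$ and $c,C_2,f$ constant; a constant $f$ is trivially $o(|t|^{\a/(2-\a)})$. Under $(S')$ the Wiener process $W_t=N_t$ has stationary increments and continuous paths, so $(S1)$-$(S2)$ hold, and $(S3)$ follows from the law of the iterated logarithm since $\tfrac{1}{2-\a}>1>\tfrac12$. Thus Theorem \ref{ra_s:thm:ra_add} yields a strictly stationary $\mcD^b$-random attractor $\mcA^b$, and strict stationarity of $A$ makes $S(t,s;\o)$ a cocycle with $\vp(t,\o)=S(t,0;\o)$ a continuous RDS. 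Combining the pullback invariance $S(t,s;\o)\mcA^b(s,\o)=\mcA^b(t,\o)$ with the cocycle relation $S(t,s;\o)=\vp(t-s,\t_s\o)$ and stationarity $\mcA^b(t,\o)=\mcA^b(\t_t\o)$ (setting $s=0$) gives $\vp(t,\o)\mcA^b(\o)=\mcA^b(\t_t\o)$, so $\mcA^b$ is an invariant random compact set in the exact sense required by Theorem \ref{ra_s:thm:singleton_RA_general}.

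Next I would establish contractivity. Since the noise is additive it cancels in the difference of two solutions: writing $z_t=Z(t,0;\o)(x-N_0)$ and $w_t=Z(t,0;\o)(y-N_0)$ for solutions of the transformed PDE \eqref{ra_s:eqn:transformed_PDE_add}, one has $\vp(t,\o)x-\vp(t,\o)y=z_t-w_t$, and the pathwise chain rule for variational solutions gives
\[
  \|z_t-w_t\|_H^2-\|z_s-w_s\|_H^2 = 2\int_s^t {}_{V^*}\<A(z_r+N_r)-A(w_r+N_r),\,(z_r+N_r)-(w_r+N_r)\>_V\,dr .
\]
By the strong monotonicity in $(A')$ the integrand, multiplied by $2$, is bounded by $-c\,\|z_r-w_r\|_H^2/(\|z_r+N_r\|_V^\d+\|w_r+N_r\|_V^\d)\le 0$, so $t\mapsto\|\vp(t,\o)x-\vp(t,\o)y\|_H$ is non-increasing, which is exactly contractivity; monotonicity of $\vp$ on the dense set $S$ is part of $(H')$.

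It remains to obtain weak-$*$ mean ergodicity of $P_tf(x)=\E f(\vp(t,\cdot)x)$, and here I would invoke \cite{LT11}: the assumptions $(A')$ and $(S')$ are tailored precisely so that its hypotheses are met, producing a measure $\mu$ with $\text{w-lim}_{T\to\infty}Q^T\nu=\mu$ for all $\nu\in\mcM_1$. With all three hypotheses of Theorem \ref{ra_s:thm:singleton_RA_general} verified, applying it to $K=\mcA^b$ yields $\text{diam}(\vp(t,\o)\mcA^b(\o))\to 0$, hence $\mcA^b(\o)=\{\eta(\o)\}$ is a single random point, and $\vp_t(\o)\eta(\o)=\eta(\t_t\o)$ is the invariance specialized to the singleton. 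The main obstacle is this last ergodicity step: contractivity and the identification of $\mcA^b$ are routine energy and stationarity arguments, whereas the ergodicity genuinely relies on matching the \emph{singular} dissipativity estimate of $(A')$ — whose denominator $\|v_1\|_V^\d+\|v_2\|_V^\d$ weakens the contraction for large data — to the hypotheses of \cite{LT11}, rather than on a uniform exponential contraction in law.
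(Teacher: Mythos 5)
Your proposal follows the paper's route exactly: identify $\mcA^b$ as an invariant random compact set, verify contractivity from monotonicity, obtain weak-$*$ mean ergodicity from \cite{LT11}, and apply Theorem \ref{ra_s:thm:singleton_RA_general}. The one step you black-box too quickly is the ergodicity: \cite[Theorem 1.3]{LT11} does not directly deliver $\text{w-lim}_{T\to\infty}Q^T\nu=\mu$; it gives a unique invariant measure $\mu$ with $\int_H\|x\|_V^\a\,d\mu(x)<\infty$ together with the two-point estimate
\[
 |P_tF(x)-P_tF(y)|\le \frac{C\,Lip(F)\,\|x-y\|_H}{\sqrt t}\left(1+\frac{\|x\|_H}{\sqrt t}+\frac{\|y\|_H}{\sqrt t}\right)^{\frac{\d}{\a}}.
\]
To deduce $P_tF(x)\to\mu(F)$ one writes $\mu(F)=\int_H P_tF(y)\,d\mu(y)$ and integrates this estimate over $y$; since the right-hand side grows in $\|y\|_H$, this requires higher moment bounds $\mu(\|\cdot\|_H^k)<\infty$, which the paper obtains from the Lyapunov-type estimate $\frac1t\E\int_0^t\|X(r,0;\cdot)0\|_H^k\,\|X(r,0;\cdot)0\|_V^\a\,dr\le C$ (as in \cite[Lemma 2.2]{LR10}) passed to the limit along the Krylov--Bogoliubov approximations $\frac1n\int_0^n\mcL(X(r,0;\cdot)0)\,dr$ of $\mu$. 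Apart from this missing intermediate step, your argument --- invariance of $\mcA^b$ via strict stationarity and the cocycle property, contractivity from the monotonicity in $(A')$, and the final diameter/measure-preservation argument --- matches the paper's proof.
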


\subsection{Real linear multiplicative noise}

First we need to construct the associated RDS, which again will be defined by first transforming the SPDE into a random PDE and then solving this random PDE for each fixed $\o \in \O$. Let $X(t,s;\o)x$ denote a variational solution to \eqref{ra_s:eqn:SPDE_real_mult} starting in $x$ at time $s$. Define $\mu_t := e^{-\mu \b_t}$ and note that $\mu_t$ satisfies
  \[ d\mu_t = - \mu \mu_t \circ d\b_t,  \]
where $\circ$ is the Stratonovich stochastic integral. For $\tilde{X}(t,s;\o)x := \mu_tX(t,s;\o)x$ we obtain
\begin{align*}
  \tilde{X}(t,s;\o)x= \mu_s x + \int_s^t \mu_r A(r,{\mu}_r^{-1} \tilde{X}(r,s;\o)x) dr.
\end{align*}
Thus, we have to solve the following random PDE
\begin{align}\label{ra_s:eqn:transformed_PDE_mult}
   Z(t,s;\o)x = x + \int_s^t A_\o(r,Z(r,s;\o)x) dr,
\end{align}
with $A_\o(r,v) := \mu_r(\o) A (r,\mu_r^{-1}(\o)v)$. We then define the RDS associated to \eqref{ra_s:eqn:SPDE_real_mult} by
 \[ S(t,s;\o)x := \mu_t^{-1}(\o) Z(t,s;\o)(\mu_s(\o) x ).\]
In the following let $(\O,\mcF,\P,\t_t)$ be the metric dynamical system associated to two-sided real valued Brownian motion (cf. \cite{A98}). As in the case of additive noise we obtain
\begin{theorem}[Generation]\label{ra_s:thm:generation_mult}
  Assume $(A1)$-$(A4)$. Then, the family of mappings $S(t,s;\o)x$ is a continuous stochastic flow in $H$. In addition, $S(t,s;\o)x$ is continuous in $t$ and right-continuous in $s$. If $A$ is $(\mcB(\R) \otimes \mcB(V) \otimes \mcF, \mcB(V^*))$-measurable then $S(t,s;\o)x$ is a measurable stochastic flow. If $A(t,v;\o)$ is strictly stationary then $S(t,s;\o)x$ is a cocycle and hence $\vp(t,\o) := S(t,0;\o)$ is a continuous RDS. 
\end{theorem}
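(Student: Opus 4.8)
\emph{Proof proposal.} The plan is to mirror the argument used for additive noise (Theorem \ref{ra_s:thm:generation_add}): reduce the generation statement to the pathwise solvability of the transformed random PDE \eqref{ra_s:eqn:transformed_PDE_mult} with drift $A_\o(r,v)=\mu_r(\o)A(r,\mu_r^{-1}(\o)v;\o)$, solve this equation for each fixed $\o$ by the variational theory, and then transport all the flow, continuity, measurability and cocycle properties through the conjugation $S(t,s;\o)x=\mu_t^{-1}(\o)Z(t,s;\o)(\mu_s(\o)x)$. What makes this work is that $r\mapsto\mu_r(\o)=e^{-\mu\b_r(\o)}$ is a strictly positive, continuous scalar for each fixed $\o$, so conjugation by $\mu_r$ neither destroys the structural hypotheses $(A1)$--$(A4)$ nor the regularity of the flow.

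First I would verify that $A_\o$ satisfies $(A1)$--$(A4)$ pathwise. Hemicontinuity $(A1)$ is immediate since $A_\o(r,\cdot)$ is a positive rescaling of $A(r,\cdot)$ precomposed with the linear map $\mu_r^{-1}\mathrm{id}$. For the remaining three the powers of $\mu_r$ bookkeep exactly: writing $\Vbk{A_\o(r,v_1)-A_\o(r,v_2),v_1-v_2}=\mu_r^2\Vbk{A(r,\mu_r^{-1}v_1)-A(r,\mu_r^{-1}v_2),\mu_r^{-1}v_1-\mu_r^{-1}v_2}$ shows $(A2)$ holds with the \emph{same} constant $C_2$; similarly $(A3)$ holds with $C_1$ unchanged, coercivity coefficient $c^\o(r)=c(r,\o)\mu_r(\o)^{2-\a}$ and free term $f^\o(r)=\mu_r(\o)^2 f(r,\o)$, and $(A4)$ holds with coefficients multiplied by suitable powers of $\mu_r$. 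Since $\a\in(1,2)$ and $\mu_r(\o)>0$, the coercivity coefficient $c^\o$ stays strictly positive, and since $r\mapsto\mu_r(\o)$ is continuous (Brownian paths are continuous) all new coefficients are pathwise continuous, locally bounded, and $f^\o(\cdot,\o)\in L^1_{loc}(\R)$. Thus the variational existence/uniqueness theory applies on every interval $[s,T]$ exactly as in the proof of Theorem \ref{ra_s:thm:generation_add}, yielding a unique $Z(t,s;\o)x$ with $Z(s,s;\o)=\mathrm{id}$, the two-parameter semigroup identity $Z(t,r;\o)Z(r,s;\o)=Z(t,s;\o)$ (by uniqueness), continuity in $x$, continuity in $t$, and right-continuity in $s$.

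These properties transfer directly to $S$. The flow axioms follow by substitution: $S(s,s;\o)x=\mu_s^{-1}\mu_s x=x$, and in $S(t,r;\o)S(r,s;\o)x$ the inner factors $\mu_r^{-1}$ and $\mu_r$ cancel, so the semigroup identity for $Z$ gives $S(t,r;\o)S(r,s;\o)=S(t,s;\o)$. Continuity in $x$ and the $t$-/$s$-regularity are inherited from $Z$ together with continuity of $r\mapsto\mu_r(\o)$; note that, in contrast to the additive case, continuity (rather than merely c\`adl\`ag behaviour) of $\b$ yields continuity of $S$ in $t$. If $A$ is jointly measurable then so is $A_\o$, because $(\o,r)\mapsto\mu_r(\o)$ is measurable, and the measurable dependence of the variational solution on its data makes $\o\mapsto S(t,s;\o)x$ measurable.

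The cocycle property is the delicate step and the main obstacle, because the conjugating scalar $\mu_s(\o)$ does not simply cancel at the level of $Z$. I would use the helix identity of the Brownian motion metric dynamical system, $\b_t(\t_s\o)=\b_{t+s}(\o)-\b_s(\o)$, which gives $\mu_{t+s}(\o)=\mu_t(\t_s\o)\mu_s(\o)$ and $\mu_0\equiv1$, together with strict stationarity of $A$ in the form $A(\tau+s,w;\o)=A(\tau,w;\t_s\o)$. These two facts combine to show that $\mu_s(\o)\,Z(\cdot,0;\t_s\o)x$ solves the same integral equation, with the same initial value $\mu_s(\o)x$, as $Z(\cdot+s,s;\o)(\mu_s(\o)x)$; by uniqueness
\[
  Z(t,s;\o)(\mu_s(\o)x)=\mu_s(\o)\,Z(t-s,0;\t_s\o)x.
\]
Substituting this into the definition of $S$ and using $\mu_{t-s}^{-1}(\t_s\o)=\mu_t^{-1}(\o)\mu_s(\o)$ collapses all scalar factors and yields $S(t,s;\o)x=S(t-s,0;\t_s\o)x$, i.e.\ the cocycle property; $\vp(t,\o):=S(t,0;\o)$ is then a continuous RDS by the cocycle and flow axioms. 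So $Z$ itself is \emph{not} a cocycle (the factor $\mu_s(\o)$ obstructs it), but the back-transformation restores it, and correctly tracking these scalars against the helix identity is where the real care is needed.
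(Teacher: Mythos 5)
Your proposal is correct and follows exactly the route the paper intends: the paper gives no separate proof for Theorem \ref{ra_s:thm:generation_mult} beyond the remark ``As in the case of additive noise we obtain,'' and your argument is precisely the multiplicative analogue of the proof of Theorem \ref{ra_s:thm:generation_add} — verifying $(A1)$--$(A4)$ for $A_\o(r,v)=\mu_r A(r,\mu_r^{-1}v)$ with the correct powers of $\mu_r$, invoking the pathwise variational theory, and deriving the cocycle property from uniqueness together with the helix identity $\mu_{t+s}(\o)=\mu_t(\t_s\o)\mu_s(\o)$ and strict stationarity of $A$. The scalar bookkeeping in your cocycle step checks out.
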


\begin{theorem}\label{ra_s:thm:finite_extinction}
  Assume that $A$ is $(\mcB(\R) \otimes \mcB(V) \otimes \mcF, \mcB(V^*))$-measurable, strictly stationary and satisfies $(A1)$-$(A4)$. Moreover, assume that there is a function $\l: \O \to \R_+\setminus\{0\}$ and a $0<p<2$ such that
  \begin{equation*}\label{ra_s:eqn:strict_coerc}
    \Vbk{A(t,v;\o),v} \le - \l(\o) \|v\|_H^p.
  \end{equation*}
  Then
   \[ \mcA(\o) := \{0\}\]
  is forward-absorbing in the sense that for every bounded set $B \subseteq H$, $s \in \R$ and $\o \in \O$ there is an absorption time $t_0 =t_0(\|B\|_H,s,\o)$ such that $\vp(t,\o)B \subseteq \{0\}$ for all $t \ge t_0$.

  If $A(t,0;\o)=0$ for all $t\in \R$, $\o\in\O$ then $\mcA$ is invariant under $\vp$ and thus $\mcA$ is a forward attractor for $\vp$.
\end{theorem}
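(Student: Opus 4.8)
The plan is to track the squared $H$-norm of the transformed solution $Z(t,s;\o)x$ solving the random PDE \eqref{ra_s:eqn:transformed_PDE_mult} and to show that, after the random time change, the strict coercivity $\Vbk{A(t,v;\o),v}\le-\l(\o)\|v\|_H^p$ turns into a \emph{sublinear} energy dissipation that forces extinction in finite time. Since $S(t,s;\o)x=\mu_t^{-1}(\o)Z(t,s;\o)(\mu_s(\o)x)$ with $\mu_t^{-1}(\o)\in(0,\infty)$, the flow $S$ vanishes exactly when $Z$ does, so it suffices to establish finite-time extinction for $Z$. Fixing $\o\in\O$, $x\in H$, $s\in\R$ and writing $Z_t:=Z(t,s;\o)x$, $y(t):=\|Z_t\|_H^2$, I would first invoke the standard chain rule in the Gelfand triple (Lions' lemma, cf.\ \cite[Theorem 4.2.5]{PR07}): since $Z_\cdot\in L^\a_{loc}([s,\infty);V)$ and $\dot Z=A_\o(\cdot,Z_\cdot)\in L^{\a/(\a-1)}_{loc}([s,\infty);V^*)$ by $(A4)$, the map $t\mapsto y(t)$ is absolutely continuous with $\dot y(t)=2\Vbk{A_\o(t,Z_t),Z_t}$ for a.e.\ $t$.

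Inserting $A_\o(t,v)=\mu_t A(t,\mu_t^{-1}v)$ and substituting $u=\mu_t^{-1}Z_t$ gives $\Vbk{A_\o(t,Z_t),Z_t}=\mu_t^2\Vbk{A(t,u),u}\le-\l(\o)\mu_t^{2-p}\|Z_t\|_H^p$, hence the differential inequality
\begin{equation*}
  \dot y(t)\le-2\l(\o)\,\mu_t^{2-p}(\o)\,y(t)^{p/2},\qquad y(s)=\|x\|_H^2 .
\end{equation*}
Because $0<p<2$ we have $0<p/2<1$, and as $y\ge0$ the right-hand side is nonpositive, so $y$ is nonincreasing. As long as $y>0$ on $[s,t]$, dividing by $y^{p/2}$ and integrating yields
\begin{equation*}
  y(t)^{1-p/2}\le\|x\|_H^{2-p}-2\l(\o)\Bigl(1-\tfrac p2\Bigr)\int_s^t\mu_r^{2-p}(\o)\,dr .
\end{equation*}

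The crucial point is that $\int_s^\infty\mu_r^{2-p}(\o)\,dr=\int_s^\infty e^{-(2-p)\mu\b_r(\o)}\,dr=+\infty$ for $\P$-a.e.\ $\o$: by recurrence of Brownian motion the set $\{r\ge s:\ \mathrm{sgn}(\mu)\b_r(\o)\le0\}$ has infinite Lebesgue measure a.s., and the integrand is $\ge1$ there. Consequently the right-hand side above reaches $0$ at the finite time
\begin{equation*}
  t_0:=\inf\Bigl\{t\ge s:\ 2\l(\o)\bigl(1-\tfrac p2\bigr)\!\int_s^t\mu_r^{2-p}(\o)\,dr\ge\|x\|_H^{2-p}\Bigr\},
\end{equation*}
which forces $y(t_0)=0$; monotonicity of $y$ then gives $y\equiv0$ on $[t_0,\infty)$, i.e.\ $Z(t,s;\o)x=0$ and hence $S(t,s;\o)x=0$ for all $t\ge t_0$. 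For the forward-absorbing claim I would observe that for $x$ in a bounded set $B$ the transformed datum obeys $\|\mu_s(\o)x\|_H\le\mu_s(\o)\|B\|_H$, so $y(s)\le\mu_s(\o)^2\|B\|_H^2$ uniformly in $x\in B$; substituting this bound for $\|x\|_H$ in the definition of $t_0$ yields a single extinction time $t_0=t_0(\|B\|_H,s,\o)$ with $S(t,s;\o)B=\{0\}$, i.e.\ $\vp(t,\o)B\subseteq\{0\}$ in the notation of the theorem, for all $t\ge t_0$.

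Finally, if $A(t,0;\o)=0$ then $A_\o(\cdot,0)=0$, so $Z\equiv0$ solves \eqref{ra_s:eqn:transformed_PDE_mult} with zero initial value; uniqueness (from $(A2)$ via Theorem \ref{ra_s:thm:generation_mult}) gives $S(t,s;\o)0=0$, whence $\{0\}$ is invariant and, together with forward absorption, a forward attractor. The genuinely new input is the a.s.\ divergence of $\int_s^\infty\mu_r^{2-p}\,dr$, the probabilistic engine of extinction; everything else reduces to the variational energy identity and an elementary ODE comparison, though some care is needed to justify the chain rule for $\|Z_t\|_H^2$ in the singular regime $1<\a<2$ (where $Z'$ lies only in $L^{\a/(\a-1)}_{loc}(V^*)$).
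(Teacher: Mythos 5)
Your proof is correct and follows the paper's overall architecture (transform to the random PDE \eqref{ra_s:eqn:transformed_PDE_mult}, apply the chain rule to $\|Z(t,s;\o)x\|_H^2$, obtain the sublinear differential inequality $\dot y\le-2\l\mu_t^{2-p}y^{p/2}$, and conclude extinction once $\int_s^t\mu_r^{2-p}\,dr$ exceeds $\tfrac{\|x\|_H^{2-p}}{(2-p)\l}$), but it diverges from the paper at the one genuinely probabilistic step. The paper isolates the divergence of $\int_s^t e^{q(\b_r-\b_s)}dr$ as Lemma \ref{ra_s:lemma:integral_div} and proves it by invoking the explicit density of the exponential functional $A_t=\int_0^te^{2\b_r}dr$ from \cite{AMS01}, estimating $\P[A_t\le K]\to0$ by a substitution in the double integral, and then upgrading convergence in probability to a.s.\ divergence via monotonicity in $t$. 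You instead observe that the integrand is $\ge1$ on $\{r:\mathrm{sgn}(\mu)\b_r\le0\}$, a set of a.s.\ infinite Lebesgue measure; this occupation-time fact follows, e.g., from the tail $0$--$1$ law plus the symmetry $\b\mapsto-\b$ (the two occupation times of the half-lines have the same law and sum to $+\infty$), or from the scaling invariance of the arcsine law together with monotonicity in $t$. Your route is markedly more elementary and self-contained --- it avoids the special-function density entirely and gives a.s.\ divergence directly --- at the cost of no quantitative information on the law of $A_t$, which is not needed here anyway. Two minor points: you correctly keep the factor $2$ and the minus sign in the differential inequality (the paper's displayed computation drops both, evidently typos), and your handling of the non-Lipschitz nonlinearity $y\mapsto y^{p/2}$ by integrating only on intervals where $y>0$ and then using monotonicity is a legitimate substitute for the paper's Comparison Lemma \ref{ra_s:lemma:comp_1}, though you should make the a.e.\ differentiability and the passage across the zero set of $y$ explicit if you write this up in full.
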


\section{Applications}

In \cite[Lemma 3.1]{GLR11} it has been shown that for each $V$-valued process $N_t$ with stationary increments and a.s.\ c\`adl\`ag paths there is a metric dynamical system $(\O,\F,\P,\{\t_t\}_{t \in \R})$ and a version $\td N_t$ on $(\O,\F,\P,\{\t_t\}_{t \in \R})$ such that $\td N_t$ satisfies $(S1)$-$(S2)$. \\
Moreover, for any L\'evy process $N_t$ with L\'evy characteristics $(m,R,\nu)$ (e.g. cf. \cite[Corollary 4.59]{PZ07}) and $\int_V \left(\|x\|_V + \|x\|^2_V \right) d\nu(x) < \infty$, we have $\frac{N_t}{|t|} \rightarrow \pm \E N_1$ $\P$-almost surely for $|t| \rightarrow \infty$ (cf.\ \cite[Lemma 3.2]{GLR11}). In particular $\|N_t\|_V = O(|t|)$ for $|t| \to \infty$ and thus $(S3)$ is satisfied for every $\a \in (1,2)$. By splitting the L\'evy process $N_t$ into a L\'evy process with jump measure of bounded support and a compound Poisson process as suggested in \cite{B96}, the moment assumptions can be relaxed to $\int_V \left(\|x\|_V + \mathbbm{1}_{B_1(0)}(x)\|x\|^2_V \right) d\nu(x) < \infty$. In case of L\'evy processes on a Hilbert space $\int_V \mathbbm{1}_{B_1(0)}(x)\|x\|^2_V d\nu(x) < \infty$ is always satisfied and thus only finite first moment has to be assumed.

We now proceed to concrete examples of SPDE satisfying the assumptions $(A1)$-$(A5)$ and $(A')$. 
%
%

%

\subsection{Generalized Stochastic Singular \texorpdfstring{$p$}{p}-Laplace Equation}
  Let $(M,g,\nu)$ be a $d$-dimensional weighted compact smooth Riemannian manifold equipped with Riemannian metric $g$, associated measure $\mu$ and $d\nu(x) := \s(x) d\mu(x)$ with $\s$ being a smooth, positive function on $M$. Further, let $\a \in (1 \vee \frac{2d}{2+d},2)$ and $V := W_0^{1,\a}(M,\nu) \subseteq H := L^2(M,\nu)$. By the assumption on $\a$, the embedding $V \subseteq H$ is well-defined and compact. We denote the inner product on $T_xM$ given by the Riemannian metric $g$ by $(\cdot,\cdot)_x$ and the associated norm by $|\cdot|_x$. Let $N_t$ be a $V$-valued process satisfying $(S1)$-$(S3)$ on the metric dynamical system $\left(\O,\{\mcF_t\}_{t \in \R},\{\t_t\}_{t \in \R},\P\right)$. Consider the singular $p$-Laplace equation
  \begin{equation}\label{ra_s:eqn:SpLP}
     dX_t = \left( div_\nu(\Phi(x,\nabla X_t,\o)) + G(X_t,\o) + g(t,\o) \right)dt + dN_t(\o),
  \end{equation}
  where $\Phi: M \times TM \times \O \to TM$ is measurable, $\Phi(x,\cdot,\o):T_xM \to T_xM$ is continuous and
  \begin{equation*}\begin{split}
     (\Phi(x,\xi,\o)-\Phi(x,\td\xi,\o),\xi-\td \xi)_x &\le 0 \\
     (\Phi(x,\xi,\o),\xi)_x &\le c(\o)|\xi|_x^\a + f(\o)\\
     |\Phi(x,\xi,\o)|_x^\frac{\a}{\a-1} &\le C_2(\o) |\xi|_x^\a + f(\o), \quad \forall x \in M,\ \xi,\td\xi \in T_xM,\ \o \in \O,
  \end{split}\end{equation*}
  with $f: \O \to \R$ being measurable, $G:\R\times\O\to\R$ is measurable with
  \begin{equation*}\begin{split}
      |G(t,\o)-G(s,\o)|         &\le C_2(\o)|t-s| \\
      |G(t,\o)|^\frac{q}{q-1}   &\le C_2(\o)(1+|t|^q),\quad\forall t,s \in \R,\ \o \in \O,
  \end{split}\end{equation*}
  for some $q \in (1,\a)$ and $g: \R \times \O \to H$ is measurable, c\`adl\`ag in $t$. As an explicit example for an admissible reaction term one may consider $G(r)= \frac{r}{\sqrt{r^2+\ve}}$. 

  The singular $p$-Laplace operator then maps $V\times\O \to V^*$ by 
    $$A(v,\o)(w) = -\int_M (\Phi(x,\nabla v,\o),\nabla w)_x d\nu(x),\quad v,w \in V,\ \o \in \O.$$
  We obtain

 \begin{example}[Generalized Stochastic Singular $p$-Laplace Equation]\label{ra_s:exam:SpLE}
    There is an associated compact stochastic flow $S(t,s;\o)x$ to \eqref{ra_s:eqn:SpLP}. If $g(\cdot,\o) = o(|\cdot|^\frac{\a}{2-\a})$ then there is a measurable, random $\mcD^\a$-attractor $\mcA^\a \in \mcD^\a$. 

    If $g \equiv 0$ then $S(t,s;\o)x$ is a cocycle and there is a measurable, strictly stationary random $\mcD^b$-attractor $\mcA^b$.

    If $M \subseteq \R^d$ is an open, bounded set, $\nu = dx$, $\Phi(\xi)=|\xi|^{\a-2}\xi$, $N_t$ is a $V$-valued Wiener process and $G, g \equiv 0$, then the random attractor $\mcA^b$ consists of a single random fixed point, i.e.\ $\mcA^b(\o) = \{\eta(\o)\}$. 
   
    In case of real linear multiplicative noise
      \[ dX_t = div_\nu(\Phi(x,\nabla X_t)) dt + \mu X_t \circ d\b_t, \]
    the deterministic set $\mcA(\o) = \{0\}$ forward absorbs all bounded deterministic sets and is invariant.
\end{example}
\begin{proof}
   The proof of the properties $(A1)$-$(A4)$ proceeds as in \cite{PR07}. 
  $(A5)$ is satisfied by Sobolev embeddings and the assumption on $\a$. 

  In case of the standard nonlinearity $\Phi(x,\xi) = |\xi|_x^{\a-2}\xi$ and $G \equiv 0$ we can check $(A')$ as in \cite[Proposition 3.2]{LT11} with $\d = 2-\a < \a$. By monotonicity of $A$, $\vp$ is contractive on $H$.

  For simplicity we now restrict to the case of open, bounded domains $M \subseteq \R^d$, $\Phi(\xi)=|\xi|^{\a-2}\xi$, $N_t$ being a Wiener process in $V$ and $G, g \equiv 0$. In order to verify $(H')$ we set $S = H = L^2(M)$ and $x \le y$ for $x, y \in H$ iff $x(\xi) \le y(\xi)$ for almost all $\xi \in M$. Existence of upper bounds is obvious. It remains to prove monotonicity of $\vp$. We consider a non-singular approximation of the nonlinearity $\Phi(\xi) := |\xi|^{\a-2}\xi$ given by $\Phi^\ve(\xi) = \left( |\xi|^2 + \ve \right)^\frac{-(2-\a)}{2}\xi$. Then
    \[  |\Phi(\xi) - \Phi^\ve(\xi)| \le 2\ve^\frac{\a-1}{2} \]
  and as a composition of smooth functions, $\Phi^\ve$ is a smooth function. Let $\{e_n\}_{n \in \N} \subseteq C^\infty(M) \cap H_0^1(M)$ be an orthonormal basis of $H$, $H_n := $ span$\{e_1,...,e_n\}$ and $\mcP_n$ be the best-approximation by elements in $H_n$ weighted by $\|\cdot\|_V$, i.e.
    $$\|\mcP_n x - x\|_V = \inf_{v \in H_n}\|v-x\|_V,\quad \forall x \in V.$$
  Then $\|\mcP_nx\|_V \le C \|x\|_V$ and $\mcP_n x \to x$ in $V$ for $n \to \infty$ and $x \in V$ (cf.\ \cite{G11}). Define $N^n_t := \mcP_n N_t$. For initial conditions $x \in C^2(M)$ classical results (cf.\ e.g.\ \cite{LSU67}) imply the existence of a classical solution $Z^{\ve,n}$ to 
    \[ dZ^{\ve,n}_t = div\left(\Phi^\ve(\nabla (Z^{\ve,n}_t + N^n_t)) \right)dt.\]
  Since $\Phi^\ve$ is differentiable we can apply the comparison result given in \cite[Theorem 9.7]{L96} to obtain $Z^{\ve,n,x}_t \le Z^{\ve,n,y}_t$ on $[0,T]\times M$ for any two initial conditions $x \le y$ with $x,y \in C^2(M)$. Note
    \[  \|A(x) - A^\ve(x)\|_{V^*} = \sup_{\|v\|_V = 1} \int_\mcO \left(\Phi(\nabla x) - \Phi^\ve(\nabla x)\right) \cdot \nabla v d\xi \le C \ve^\frac{p-1}{2}. \]
  Since the operators $A^\ve$ satisfy uniform coercivity and growth conditions an application of Proposition \ref{ra_s:prop:convergence} yields $Z^{\ve,n} \to Z^n$ in $C([0,T];H)$ for $\ve \to 0$. By dominated convergence we have $N^n_\cdot(\o) \to N_\cdot(\o)$ in $L^\a([0,T];V)$ for each $\o \in \O$. This implies $Z^{n} \to Z$ in $C([0,T];H)$. Since ''$\le$`` is closed with respect to the $H$-norm, we obtain 
    $$Z_t^x \le Z_t^y, \text{ for all } t \in [0,T] \text{ and a.e.\ in }  M,$$
  for initial conditions $x \le y$, $x,y \in C^2(M)$. By continuity in the initial condition this extends to all $x \le y$, $x,y \in H$.
\end{proof}

\subsection{Generalized Stochastic Fast Diffusion Equation}

Let $(E,\mcB,m)$ be a finite measure space with countably generated $\s$-algebra $\mcB$ and let $(L,\mcD(L))$ be a negative-definite, self-adjoint, strictly coercive (i.e.\ $(-Lv,v)_{L^2(m)} \ge c\|v\|_{L^2(m)}^2$) operator on $L^2(m)$. Define $\mcD(\mcE) := \mcD(\sqrt{-L})$ and $\mcE(u,v) := (\sqrt{-L}u,\sqrt{-L}v), \text{ for } u,v \in \mcD(\mcE)$, where we have set $m(fg) := \int_E fg\ dm$, for $fg \in L^1(m)$, $(f,g) := (f,g)_{L^2(m)}$ and $\|f\| := \|f\|_{L^2(m)}$. Then $(D(\mcE),\mcE)$ is a Hilbert space. 

Let $\Phi: \R\times\O \to \R$ be measurable such that $\Phi(0,\o) = 0$, $\Phi(\cdot,\o) \in C(\R)$ and
\begin{equation}\label{ra_s:eqn:phi_ass_fde}\begin{split}
  (\Phi(r,\o)-\Phi(s,\o))(r-s)      &\ge 0\\
  \Phi(r,\o)r                         &\ge c(\o)|r|^\a - f(\o) \\
  |\Phi(r,\o)|^\frac{\a}{\a-1}        &\le C_2(\o)|r|^\a + f(\o), \quad \forall \o \in \O,\  s\le r,
\end{split}\end{equation}
for some $\a \in (1,2)$, $c: \O \to \R\setminus\{0\}$, $C_2: \O \to \R$ and $f: \O \to \R$ measurable. In particular, the standard nonlinearity $\Phi(r) := |r|^{\a-2}r$ is included in our general framework. We assume
\begin{enumerate}
 \item[(L)] The embedding $\mcD(E) \subseteq L^\frac{\a}{\a-1}(m)$ is compact and dense.
\end{enumerate}
This yields the Gelfand triple
  $$ V := L^{\a}(m) \subseteq  H := D(\mcE)^* \subseteq V^*.$$

\begin{example}
  Let
  \begin{enumerate}
    \item $E$ be a smooth, compact Riemannian $d$-dimensional manifold, $\a \in (1\vee\frac{2d}{d+2},2)$ and $L$ be the Friedrichs extension of a symmetric, uniformly elliptic operator of second order on $L^2(m)$ with Dirichlet boundary conditions. For example, let $L$ be the Dirichlet Laplacian on $E$.
    \item $E \subseteq \R^d$ be an open, bounded domain, $L:=(-\D)^\b$ with its standard domain and $\b \in \frac{d}{2}\left(\frac{2-\a}{\a},1\right) \cap (0,1]$.
  \end{enumerate}
  Then $(L)$ is satisfied.
\end{example}

Let $g: \R \times \O \to H$ be measurable with c\'adl\'ag paths and $g(t,\o)=o(|t|^\frac{\a}{2-\a})$ for $t \to -\infty$. 

\begin{example}[Generalized Stochastic Fast Diffusion Equation] The compact stochastic flow $S(t,s;\o)x$ associated to the stochastic fast diffusion equation
  \begin{equation}\label{ra_s:PME}
    dX_t= \left(L \Phi(X_t) + g(t)\right)dt + dN_t,
  \end{equation}
  with $N$ satisfying $(S1)$-$(S3)$ has a measurable random $\mcD^\a$-attractor $\mcA^\a \in \mcD^\a$.

  If $g \equiv 0$ then $S(t,s;\o)x$ is a cocycle and there is a measurable, strictly stationary random $\mcD^b$-attractor $\mcA^b$.

  If $E \subseteq \R^d$ is an open, bounded set, $L=\D$, $\Phi(r) = |r|^{\a-2}r$, $g\equiv 0$ and $N$ is a Wiener process in $V$
  then $\mcA^b$ is a single random point. 
  
  For real linear multiplicative noise
    $$  dX_t= L \Phi(X_t) dt + \mu X_t \circ d\b_t, $$
  and nonlinearites $\Phi$ satisfying \eqref{ra_s:eqn:phi_ass_fde} with $f \equiv 0$, $\mcA := \{0\}$ is invariant and forward absorbs all bounded sets $B \subseteq H$.
\end{example}
\begin{proof}
  The properties $(A1)$-$(A4)$ can be proven as in \cite{PR07}, $(A')$ with $\d = 2-\a$ as in \cite{LT11}. $(A5)$ is satisfied by assumption and monotonicity of $A$ implies contractivity of $S(t,s;\o)x$.

  For simplicity we now restrict to the case of $E \subseteq \R^d$ being an open, bounded set. Let $L=\D$, $\Phi(r) = |r|^{\a-2}r$ and $N$ be a Wiener process in $V$. Set $S = V$ and define $H_+ \subseteq H$ to be the closed, convex cone of all nonnegative distributions in $H$ with induced partial order structure ''$\le$'' on $H$. For elements $x,y \in V \subseteq H$ we have $x \le y$ iff $x(\xi) \le y(\xi)$ for almost all $\xi \in \mcO$. Existence of upper bounds in $S$ is obvious. It remains to prove monotonicity of $\vp$. For this we consider a smooth approximation of the nonlinearity $\Phi(r) = |r|^{\a-2}r$ given by $\Phi^\ve(r) = (|r|^2+\ve)^\frac{\a-2}{2}r$. Then $|\Phi(r)-\Phi^\ve(r)| \le 2\ve^\frac{\a-1}{2}$. Let $\{e_n\}_{n \in \N} \subseteq C^\infty(M) \cap H_0^1(M)$ be an orthonormal basis of $H$, $H_n := $ span$\{e_1,...,e_n\}$, $\mcP_n$ be the best-approximation by elements in $H_n$ weighted by $\|\cdot\|_V$ (cf. Example \ref{ra_s:exam:SpLE}) and  $N_t^n := \mcP_n N_t$. By classical existence results for uniformly parabolic quasilinear PDE (cf.\ \cite{LSU67}), the approximating equation
    \[ \frac{d}{dt} Z_t^{\ve,n} = \D\Phi^\ve(Z_t^{\ve,n}+N_t^n) \]
  has a unique classical solution for initial conditions in $C^2(E)$. By classical comparison results \cite[Theorem 9.7]{L96} for two such initial conditions $x \le y$, $x,y \in C^2(E)$ we obtain 
    \[ Z_t^{\ve,n,x} \le Z_t^{\ve,n,y}, \text{ on } [0,T] \times E.\]
  We conclude the proof as in Example \ref{ra_s:exam:SpLE}.
%
\end{proof}


\section{Proofs}\label{ra_s:sec:proofs}

\subsection{Stochastic flows and RDS}

First, we prove some properties of $\O$-limit sets of asymptotically compact stochastic flows. Similar results have been obtained in \cite{CLR06}.
\begin{lemma}\label{ra_s:lemma:suff_cond_attr}
  Let $S(t,s;\o)$ be a continuous stochastic flow.
  \begin{enumerate}
    \item[(i)] Assume that $S(t,s;\o)$ is $\mcD$-asymptotically compact. Then
      $$\O(D,t;\o) \subseteq K(t,\o) \cap \O(K,t;\o)$$
      is a compact, invariant set for all $D \in\mcD$, $t \in \R$, $\o \in \O_0$, where $\O_0$ is as in Definition \ref{ra_s:def:abs} and $\O(D,t;\o)$ attracts $D$. 
    \item[(ii)] If $S(t,s;\o)x$ is a cocycle and $D$ is strictly stationary, then $\O(D,t;\o)$ is strictly stationary.
    \item[(iii)] If $\{D(t,\o)\}_{t \in\R,\ \o \in \O}$ is a right lower-semicontinuous random closed set, $s \mapsto S(t,s;\o)x$ is right-continuous locally uniformly in $x$ and $S(t,s;\o)x$ is measurable, then $\O(D,t;\o)$ is a random closed set.
  \end{enumerate}
\end{lemma}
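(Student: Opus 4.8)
The plan is to handle the three assertions separately, since (i) is the classical structure theorem for $\O$-limit sets, (ii) is a change-of-variables computation built on the cocycle property, and (iii) is the genuinely technical measurability claim.

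For (i) I would first prove the two inclusions. To get $\O(D,t;\o)\subseteq K(t,\o)$, take $y\in\O(D,t;\o)$, write $y=\lim_n S(t,s_n;\o)x_n$ with $s_n\to-\infty$ and $x_n\in D(s_n,\o)$, and combine the $\mcD$-attraction of $K$ with the closedness of the compact set $K(t,\o)$. For $\O(D,t;\o)\subseteq\O(K,t;\o)$ I would fix $r<t$, insert an intermediate time $\tau\in(-\infty,r)$ via the flow property $S(t,s_n;\o)x_n=S(t,\tau;\o)S(\tau,s_n;\o)x_n$ (valid once $s_n<\tau$), use attraction at time $\tau$ together with the compactness of $K(\tau,\o)$ to extract a subsequential limit $k\in K(\tau,\o)$ of $S(\tau,s_n;\o)x_n$, and conclude $y=S(t,\tau;\o)k$ by continuity of $S(t,\tau;\o)$; since $\tau<r$, this places $y$ in the $r$-th set defining $\O(K,t;\o)$, and $r<t$ is arbitrary. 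Compactness of $\O(D,t;\o)$ is then immediate (it is closed as an intersection of closed sets and sits inside the compact $K(t,\o)$). Attraction of $D$ by $\O(D,t;\o)$ I would prove by the usual contradiction argument, extracting from an escaping sequence a convergent subsequence whose limit necessarily lies in $\O(D,t;\o)$. Invariance $S(t,s;\o)\O(D,s;\o)=\O(D,t;\o)$ splits into two inclusions: the forward one from continuity and the flow property, the backward one by decomposing $S(t,s_n;\o)x_n=S(t,s;\o)S(s,s_n;\o)x_n$ and extracting a convergent subsequence of $S(s,s_n;\o)x_n$ landing in $\O(D,s;\o)$.

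For (ii) I would substitute the cocycle identity $S(t,\tau;\t_h\o)=S(t-\tau,0;\t_{\tau+h}\o)$ and the strict stationarity $D(\tau,\t_h\o)=D(\tau+h,\o)$ into the defining formula, obtaining
$$S(t,\tau;\t_h\o)D(\tau,\t_h\o)=S(t+h,\tau+h;\o)D(\tau+h,\o).$$
The index shift $\sigma=\tau+h$ turns the union over $\tau<r$ into the union over $\sigma<r+h$, and intersecting over $r<t$ becomes intersecting over $r+h<t+h$, so the whole expression equals $\O(D,t+h;\o)$; this is exactly the identity $\O(D,t;\t_h\o)=\O(D,t+h;\o)$, i.e.\ strict stationarity.

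The real work is (iii), and the main obstacle is to replace every uncountable union and intersection in the definition of $\O(D,t;\o)$ by a countable one while preserving measurability of $\o\mapsto d(x,\O(D,t;\o))$. First, since $C_r(\o):=\overline{\bigcup_{\tau<r}S(t,\tau;\o)D(\tau,\o)}$ increases in $r$, the intersection over $r<t$ reduces to the countable decreasing intersection $\bigcap_n C_{-n}(\o)$; invoking asymptotic compactness (so that $C_{-n}(\o)$ is compact for large $n$) I would then write $d(x,\O(D,t;\o))=\sup_n d(x,C_{-n}(\o))$. Next, for each fixed $r$ I would show $d(x,C_r(\o))=\inf_{\tau\in\mathbb{Q},\,\tau<r}d(x,S(t,\tau;\o)D(\tau,\o))$: the nontrivial direction approximates an arbitrary $\tau<r$ and $y\in D(\tau,\o)$ by rationals $\tau_k\downarrow\tau$ and, using right lower-semicontinuity of $D$, by points $y_k\in D(\tau_k,\o)$ with $y_k\to y$, whereupon right-continuity of $s\mapsto S(t,s;\o)\cdot$ locally uniformly in the initial datum gives $S(t,\tau_k;\o)y_k\to S(t,\tau;\o)y$ and hence $d(x,S(t,\tau_k;\o)y_k)\to d(x,S(t,\tau;\o)y)$. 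Finally, for each rational $\tau$ I would take a Castaing representation $\{y_k(\cdot)\}$ of the random closed set $D(\tau,\cdot)$, so that continuity of $S(t,\tau;\o)$ yields $d(x,S(t,\tau;\o)D(\tau,\o))=\inf_k d(x,S(t,\tau;\o)y_k(\o))$, with each term measurable because $\o\mapsto S(t,\tau;\o)y_k(\o)$ is measurable (the flow being measurable in $\o$ and continuous in $x$, hence jointly measurable, composed with measurable selections). Assembling these countable operations shows $\o\mapsto d(x,\O(D,t;\o))$ is measurable, and closedness being automatic, $\O(D,t;\o)$ is a random closed set. The delicate point is precisely the interplay between right-continuity in $s$ and right lower-semicontinuity of $D$, which is what licenses the passage to rational times.
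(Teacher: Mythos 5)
Your proposal is correct and follows essentially the same route as the paper's proof: subsequence extraction via the attracting compact family for the inclusions, invariance and attraction in (i); the index-shift computation in (ii); and in (iii) the reduction of the infimum over $\tau<r$ to rational times via right lower-semicontinuity of $D$ combined with locally uniform right-continuity of $s\mapsto S(t,s;\o)x$, followed by measurability for fixed rational $\tau$ from separability of $H$. The only (immaterial) divergence is that you establish $\O(D,t;\o)\subseteq\O(K,t;\o)$ directly by inserting an intermediate time and extracting a limit in $K(\tau,\o)$, whereas the paper deduces it from the already-proved invariance of $\O(D,t;\o)$ together with $\O(D,\tau;\o)\subseteq K(\tau,\o)$.
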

\begin{proof}
  (i): Since $S(t,s;\o)x$ is $\mcD$-asymptotically compact, there is a $\mcD$-attracting compact set $K$ on some subset $\O_0 \subseteq \O$ of full $\P$-measure. Since $K$ is $\mcD$-attracting we know $d(S(t,s;\o)D(s,\o),K(t,\o)) \to 0$ for $s \to - \infty$ for all $\o \in \O_0$, $t \in \R$. Hence, 
    $$d\left( \overline{\bigcup_{\tau \le r}S(t,\tau;\o)D(\tau,\o)},K(t,\o) \right) \to 0$$
  for $r \to -\infty$ and thus $d(\O(D,t;\o),K(t,\o)) = 0$, i.e.\ $\O(D,t;\o) \subseteq K(t,\o)$. 

  Next, we prove invariance of $\O(D,t;\o)$. Let $x \in \O(D,s;\o)$. There are sequences $s_n \to -\infty$, $x_n \in D(s_n,\o)$ such that $S(s,s_n;\o)x_n \to x$. By the flow property $S(t,s_n;\o)x_n = S(t,s;\o)S(s,s_n;\o)x_n \to S(t,s;\o)x \in \O(D,t;\o)$. \\
  Let now $z \in \O(D,t;\o)$, i.e.\ $S(t,s_n;\o)x_n \to z$ for some $s_n \to -\infty$ and $x_n \in D(s_n,\o)$. By $\mcD$-asymptotic compactness of $S(t,s;\o)x$ there is a subsequence $S(s,s_{n_l};\o)x_{n_l} \to x \ni  \O(D,s;\o)$. Hence, $S(t,s;\o)x = \lim_{l \to \infty} S(t,s;\o)S(s,s_{n_l};\o)x_{n_l} = z$.
  
  Invariance of $\O(D,t;\o)$ together with $\O(D,t;\o) \subseteq K(t,\o)$ then yields
    $$\O(D,t;\o) \subseteq \O(K,t;\o) \cap K(t,\o).$$

  Assume $\O(D,t;\o)$ does not attract $D$. Then there are $t \in \R$, $\ve > 0$, $\o \in \O_0$ and sequences $s_n \to -\infty$, $x_n \in D(s_n,\o)$ such that $d(S(t,s_n;\o)x_n,\O(D,t;\o)) \ge \ve$. By asymptotic compactness we can choose a convergent subsequence $S(t,s_{n_l};\o)x_{n_l} \to x \ni \O(D,t;\o)$ which leads to a contradiction. 

  (ii): To prove strict stationarity of $\O(D,t;\o)$ we note
    $$\O(D,t;\o) = \bigcap_{r \le t} \overline{\bigcup_{\tau \le r} S(t,\tau;\o)D(\tau,\o)} = \bigcap_{r \le t} \overline{\bigcup_{\tau \le r} S(0,\tau-t;\t_{t}\o)D(\tau-t,\t_{t}\o)} = \O(D,0;\t_{t}\o)$$

  (iii): $\O(D,t;\o)$ is a countable intersection of sets of the form $\overline{ \bigcup_{\tau < r} S(t,\tau;\o)D(\tau,\o)}$. 
  Hence, it is enough to prove measurability of $\o \mapsto d\left(x, \bigcup_{\tau < r} S(t,\tau;\o)D(\tau,\o) \right)$ for all $r \le t$, $x \in H$. Let $\tau_n \downarrow \tau$. By right lower-semicontinuity of $D(\cdot,\o)$ for each $y \in D(\tau,\o)$ there is a sequence $y_n \in D(\tau_n,\o)$ such that $y_n \to y$. Local uniform continuity of $\tau \mapsto S(t,\tau;\o)x$ thus yields $S(t,\tau_n;\o)y_n \to S(t,\tau;\o)y$. Hence
    $$ \limsup_{n \to \infty} d\left(x, S(t,\tau_n;\o)D(\tau_n,\o) \right) \le  d\left(x, S(t,\tau;\o)D(\tau,\o)\right)  $$
  and thus
  \begin{align*}
     \o \mapsto d\left(x, \bigcup_{\tau < r} S(t,\tau;\o)D(\tau,\o)\right) 
     &= \inf_{\tau < r}d\left(x, S(t,\tau;\o)D(\tau,\o) \right) \\
     &= \inf_{\tau < r,\ \tau \in \Q}d\left(x, S(t,\tau;\o)D(\tau,\o) \right).
  \end{align*}
  Therefore it is enough to prove measurability of $\o \mapsto d\left(x, S(t,\tau;\o)D(\tau,\o) \right)$ for all $\tau \le t$  which is satisfied by measurability of $S(t,\tau;\cdot)x$ and $D(\tau,\cdot)$ and by separability of $H$.
\end{proof}

\begin{proof}[Proof of Theorem \ref{ra_s:thm:suff_cond_attr}:]
  By  Lemma \ref{ra_s:lemma:suff_cond_attr} we know $\mcA(t,\o) \subseteq K(t,\o) \cap \O(K,t;\o)$ for all $\o \in \O_0$, $t \in \R$. In particular, $\mcA$ is compact. Since $\O(D,t;\o) \subseteq \mcA(t,\o)$ for all $\o \in \O_0$ and $\O(D,t;\o)$ is $D$ attracting, $\mcA$ is $\mcD$-attracting. Compactness of $\bigcup_{D \in \mcD} \O(D,t;\o)$ and invariance of $\O(D,t;\o)x$ yield invariance of $\mcA$.

  Let now $s \mapsto S(t,s;\o)x$ be right-continuous locally uniformly in $x$, $S(t,s;\o)x$ be measurable and (i) be satisfied. Then, by Lemma \ref{ra_s:lemma:suff_cond_attr}
    $$\mcA(t,\o) = \overline{\bigcup_{D_0 \in \mcD_0} \O(D_0,t;\o)}$$
  is the closure of a countable union of random closed sets. Hence, $\mcA$ is a random closed set. If (ii) holds, then $\O(K,t;\o) \subseteq \mcA(t,\o)$ and thus
    $$\mcA(t,\o) = \O(K,t;\o),$$
  for all $\o \in \O_0$, which is a closed random set by Lemma \ref{ra_s:lemma:suff_cond_attr}.
%
%
\end{proof}

\subsection{Generation of an RDS (Theorem \ref{ra_s:thm:generation_add})}

As outlined in Section \ref{ra_s:sec:main_result} we construct the stochastic flow associated to \eqref{ra_s:eqn:SPDE_add} by proving the unique existence of a solution to the transformed equation \eqref{ra_s:eqn:transformed_PDE_add} via the variational approach to (S)PDE as given in \cite{PR07}. To do so we check the assumptions $(H1)$-$(H4)$ in \cite{PR07} for $A_\o(t,v)$. For the ease of notation we suppress the $\o$-dependency of the coefficients occurring in the following calculations. $(H1)$, $(H2)$ immediately follow from $(A1)$, $(A2)$.
%

$(H3)$: For $v \in V$, $\o \in \O$ and $t \in \R$: 
\begin{align}\label{ra_s:eqn:H3-1}
  2{ }_{V^*}\< A_\o(t,v), v \>_V  
   &= 2 { }_{V^*}\< A \left(t,v + N_t \right),v+ N_t \>_V -2 { }_{V^*}\< A \left(t, v + N_t \right), N_t \>_V \nonumber\\
   &\le C_1(t) \|v + N_t\|_H^2 - c(t) \|v + N_t\|_V^\a + f(t) \\
     &\hskip10pt + 2 \|A \left(t, v + N_t \right)\|_{V^*} \|N_t\|_V \nonumber.
\end{align}
Using Young's inequality for all $\ve_1 > 0$ and some $C_{\ve_1}$  we obtain
\begin{flalign*}
  2 \|A \left(t, v + N_t \right)\|_{V^*} \|N_t\|_V  
  &\le \ve_1 \|A \left(t, v + N_t \right)\|_{V^*}^{\frac{\a}{\a-1}} + C_{\ve_1} \|N_t\|_V^\a  &&\\
  &\le \ve_1 C_1(t) \|v + N_t\|_H^2 + \ve_1 C_2(t) \|v + N_t\|_V^\alpha + \ve_1 f(t) + C_{\ve_1} \|N_t\|_V^\a,
\end{flalign*}
Using this in \eqref{ra_s:eqn:H3-1} yields
\begin{align*}
    2{  }_{V^*}\< A_\o(t,v), v \>_V  
   &\le C_1(t)(1+ \ve_1) \|v + N_t\|_H^2 - (c(t)-\ve_1 C_2(t)) \|v + N_t\|_V^\a + (1+\ve_1)f(t) \\
     &\hskip10pt  + C_{\ve_1} \|N_t\|_V^\a .
\end{align*}
Using
  \[ \|v+N_t(\o)\|_V^\a \ge  2^{1-\a}\|v\|_V^\a - \|N_t(\o)\|_V^\a\]
we obtain (for $\ve_1$ small enough):
\begin{align*}
  2{  }_{V^*}\< A_\o(t,v), v \>_V 
  &\le 2C_1(t)(1+ \ve_1 ) \|v\|_H^2 - (2^{1-\a}c(t)- \ve_1 2^{1-\a} C_2(t)) \|v\|_V^\a + (1+\ve_1)f(t)\\
     &\hskip10pt  + (C_{\ve_1}+c(t)-\ve_1 C_2(t))\|N_t\|_V^\a + 2C_1(t)(1+ \ve_1) \|N_t\|_H^2.
\end{align*}
which yields
\begin{align}\label{ra_s:eqn:transf_coerc}
  2\ _{V^*}\< A_\o(t,v), v \>_V  &\le \tilde{C}_1(t) \|v\|_H^2 - \tilde{c}(t) \|v\|_V^\a + \tilde{f}(t),
\end{align}
with 
\begin{align*}
  \tilde{f}(t) &:= (1+\ve_1)f(t)  + (C_{\ve_1}+c(t)-\ve_1 C_2(t)) \|N_t\|_V^\a + 2C_1(t)(1+ \ve_1) \|N_t\|_H^2 .\nonumber
\end{align*}
By right-continuity of $N_\cdot(\o)$ and by choosing $\ve_1$ small enough we obtain $(H3)$ for each compact interval $[S,T] \subseteq \R$. 

$(H4)$: For $v \in V$, $\o \in \O$ and $t \in \R$: 
\begin{align}\label{ra_s:eqn:H4-t}
  \|A_\o(t,v)\|_{V^*}^{\frac{\a}{\a-1}}g
  &=\| A \left(t, v + N_t \right) \|_{V^*}^{\frac{\a}{\a-1}} \nonumber \\
  &\le C_1(t)  \|v + N_t\|_H^2 + C_2(t)  \|v + N_t\|_V^\a + f(t) \\
  &\le \tilde{C}_1(t) \|v\|_H^2 + \tilde{C}_2(t) \|v\|_V^\a + \tilde{f}(t),  \nonumber
\end{align}
with $\tilde{C}_1(t) := 2 C_1(t), \tilde{C}_2(t) = 2^{\a-1} C_2(t)$ and
\begin{align*}
  \tilde{f}(t) 
    &:= f(t) + 2 C_1(t) \|N_t\|_H^2 +  2^{\a-1} C_2(t)\|N_t\|_V^\a.
\end{align*}

Hence $(H1)-(H4)$ are satisfied for $A_\o$ and by \cite[Theorem 4.2.4]{PR07} we obtain the unique existence of a solution 
\begin{equation*}
  Z(\cdot,s;\o)x \in L^\a_{loc}([s,\infty);V) \cap C([s,\infty);H) 
\end{equation*}
to \eqref{ra_s:eqn:transformed_PDE_add} for all $s \in \R$, $\o \in \O$, $x \in H$. By uniqueness for \eqref{ra_s:eqn:transformed_PDE_add} we have the flow property
\begin{equation*}
  Z(t,s;\o)x = Z(t,r;\o)Z(r,s;\o)x.
\end{equation*}
which implies that
\begin{equation*}
  S(t,s;\o)x := Z(t,s;\o)(x-N_s(\o))+N_t(\o)
\end{equation*}
defines a stochastic flow. 

The continuity of $t \mapsto Z(t,s;\o)x$ is contained in \cite[Theorem 4.2.4]{PR07}. Since $N_t(\o)$ is c\`adl\`ag in $t$ this implies that $t \mapsto S(t,s;\o)x$ is c\`adl\`ag. Monotonicity of $A_\o$ implies
  \[ \|Z(t,s;\o)x - Z(t,s;\o)y\|^2_H \le e^{\int_s^t C_2(r)dr}\|x-y\|_H^2. \]
Thus $x \mapsto Z(t,s;\o)x$ is continuous, uniformly in $t,s$ on bounded sets. Moreover,
\begin{align*}
  \|Z(t,s_1;\o)x - Z(t,s_2;\o)x\|^2_H 
  &= \|Z(t,s_2;\o)Z(s_2,s_1;\o)x - Z(t,s_2;\o)x\|^2_H  \\
  &\le e^{\int_{s_2}^t C_2(r)dr}\|Z(s_2,s_1;\o)x-x\|_H^2,\ \forall s_1 < s_2, 
\end{align*}
which implies right-continuity of $s \mapsto Z(t,s;\o)x$ and thus of $s \mapsto S(t,s;\o)x$ locally uniformly in $t$ and $x$.

Let now $A$ be $(\mcB(\R) \otimes \mcB(V) \otimes \mcF, \mcB(V^*))$-measurable. Then measurability of $Z(t,s;\o)x$ and $S(t,s;\o)x$ follows as in \cite[Theorem 1.1]{GLR11}.

Assume that $A(t,v;\o)$ is strictly stationary. We note
\begin{align*}
  Z(t,s;\o)x 
  &= x + \int_s^t A(r,Z(r,s;\o)x+N_r(\o),\o)dr \\
  &= x + \int_0^{t-s} A(r,Z(r+s,s;\o)x+N_s(\o)-N_0(\t_s\o)+N_r(\t_s\o),\t_s\o)dr.
\end{align*}
By uniqueness for \eqref{ra_s:eqn:transformed_PDE_add} we have
\begin{align*}
  Z(t,s;\o)x + N_s(\o) - N_0(\t_s\o)= Z(t-s,0;\t_s \o)(x+N_s(\o)-N_0(\t_s\o)).
\end{align*}
Hence
\begin{align*}
  S(t,s;\o)x  
    &=  Z(t,s;\o)(x-N_s(\o))+N_t(\o) \\
    &= Z(t-s,0;\t_s \o)(x-N_0(\t_s\o)) + N_t(\o)-N_s(\o)+N_0(\t_s\o) \\
    &= Z(t-s,0;\t_s\o)(x-N_0(\t_s\o)) + N_{t-s}(\t_s\o) \\
    &= S(t-s,0;\t_s\o)x,
\end{align*}
i.e.\ $S(t,s;\o)x$ is a cocycle.


\subsection{Compactness of the stochastic flow (Theorem \ref{ra_s:thm:compactness})}
\begin{proof}
  We will first show compactness of $Z(t,s;\o)x$. Let $\o \in \O$, $B \subseteq H$ bounded, $s < t$ and $z_n \in Z(t,s;\o)B$, i.e. $z_n = Z(t,s;\o)b_n$ for some sequence $b_n \in B$. We need to prove the existence of a convergent subsequence of $z_n$. First note that by \eqref{ra_s:eqn:transf_coerc}
  \begin{align*}
    \|Z(t,s;\o)b_n\|_H^2 
    &\le \|b_n\|_H^2 + \int_s^t \tilde{C}_1(r)\|Z(r,s;\o)b_n\|_H^2 dr \\
      &\hskip15pt - \int_s^t \tilde{c}(r) \|Z(r,s;\o)b_n\|_V^\a dr  + \int_s^t \tilde{f}(r)dr,
  \end{align*}
  where for notational convenience we do not explicitly write the $\o$-dependency of the coefficients. Since $\tilde{c}(r) > 0$ is right-continuous there is a $C > 0$ such that
    \[ \int_s^t \|Z(r,s;\o)b_n\|_V^\a dr \le C.\]
  By definition $Z(\cdot,s;\o)b_n$ satisfies
    \[ Z(r,s;\o)b_n = b_n + \int_s^r A_\o(\tau,Z(\tau,s;\o)b_n) d\tau,\]
  as an equation in $V^*$ for all $r \ge s$. Thus $\frac{d}{dr}Z(r,s;\o)b_n$ exists in $V^*$ (cf. \cite[Theorem 1.6., p.104]{S97}) and satisfies
    \[ \frac{d}{dr}Z(r,s;\o)b_n = A_\o(r,Z(r,s;\o)b_n) ,\]
  for almost all $r \in [s,\infty)$. For some constant $C > 0$ we obtain
  \begin{align*}
    &\int_s^t \left\| \frac{d}{dr}Z(r,s;\o)b_n \right\|_{V^*}^{\frac{\a}{\a-1}} dr = \int_s^t \|A_\o(r,Z(r,s;\o)b_n)\|_{V^*}^{\frac{\a}{\a-1}} dr \\
    &\le \int_s^t \tilde{C}_1(r) \|Z(r,s;\o)b_n\|_H^2 + \tilde{C}_2(r) \|Z(r,s;\o)b_n\|_V^\a + \tilde{f}(r) \ dr \\
    &\le \int_s^t \tilde{f}(r) dr + C < \infty,
  \end{align*}
  where the right hand side is independent of $n$. Thus $\{ Z(\cdot,s;\o)b_n \}$ is bounded in the space
  \begin{align*}
    W           &= \left\{ v \in L^\a([s,t];V), \frac{d}{dr}v \in L^{\frac{\a}{\a-1}}([s,t];V^*) \right\} \\
    \|v\|_W     &= \|v\|_{L^\a([s,t];V)}+ \left\|\frac{d}{dr}v \right\|_{L^{\frac{\a}{\a-1}}([s,t];V^*)}.
  \end{align*}
  By \cite[Theorem 2.1]{T01} $W \subseteq  L^\a([s,t];H)$ is compact. Hence $\{ Z(\cdot,s;\o)b_n \}$ is precompact in $L^\a([s,t];H)$ and we can choose a subsequence of $b_n$ (again denoted by $b_n$) and a $Z_0 \in L^\a([s,t];H)$ such that
    \[ Z(\cdot,s;\o)b_n \to Z_0,\]
  in $L^\a([s,t];H)$. Hence, by choosing a further subsequence of $b_n$ (denoting it by $b_n$ again) we obtain
    \[ Z(r,s;\o)b_n \to Z_0(r),\]
  in $H$, for almost every $r \in [s,t]$. Choose one such $r \in [s,t]$. Then
    \[ Z(t,s;\o)b_n = Z(t,r;\o)Z(r,s;\o)b_n \to Z(t,r;\o)Z_0(r).\]
  We have found the required convergent subsequence of $Z(t,s;\o)b_n$. Compactness of $Z(t,s;\o)x$ implies compactness of $S(t,s;\o)x$.
\end{proof}

\subsection{Comparison and a priori bounds}\label{ra_s:ssec:comp}
We present a comparison result and a priori bounds for certain ordinary differential equations that are the foundation of the proof of bounded absorption (Proposition \ref{ra_s:prop:add_bounded_absorption}).

\begin{lemma}[Comparison Lemma]\label{ra_s:lemma:comp_1}
  Let $0 < \b < 1$, $s \le t$, $q \ge 0$, $h \in L^1([s,t])$ nonnegative and $v: [s,t] \to \R_+$ be an absolutely continuous subsolution of
  \begin{align}\label{ra_s:eqn:comp_1_1}
    y'(r) &= -h(r)y(r)^\b,\ r \in [s,t],
  \end{align}
  with $y(s)  = q$, i.e. for almost every $r \in [s,t]$
  \begin{align}\label{ra_s:eqn:comp_1_2}
    v'(r) \le -h(r)v(r)^\b
  \end{align}
  and $v(s) \le q$. Then
    \[ v(r) \le \left( q^{1-\b} - (1-\b) \int_s^r h(\tau)d\tau \vee 0 \right)^{\frac{1}{1-\b}},\]
  for all $r \in [s,t]$.
\end{lemma}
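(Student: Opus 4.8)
The plan is to reduce the differential inequality to a \emph{linear} one by the substitution $w := v^{1-\b}$, for which the only genuine difficulty is the non-Lipschitz behaviour of $x \mapsto x^{1-\b}$ near the origin. The key preliminary observation I would make is that $v$ is automatically non-increasing: since $h \ge 0$ and $v \ge 0$, the hypothesis gives $v'(r) \le -h(r)v(r)^\b \le 0$ for a.e.\ $r$, and as $v$ is absolutely continuous (hence the fundamental theorem of calculus applies) this forces $v$ to be continuous and non-increasing on $[s,t]$. Because $v$ is also nonnegative, its zero set, if non-empty, is a terminal subinterval $[T_0,t]$, and $v > 0$ on $[s,T_0)$; by continuity $v(T_0)=0$. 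This monotonicity is exactly the device that confines the singular substitution to the region where $v$ stays bounded away from $0$.

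Next I would handle the region $\{v>0\}$. Fixing any $r < T_0$, monotonicity gives $v \ge v(r) > 0$ on $[s,r]$, so $x \mapsto x^{1-\b}$ is $C^1$ (hence Lipschitz) on the compact interval $[v(r),v(s)] \subseteq (0,\infty)$; consequently $w = v^{1-\b}$ is absolutely continuous on $[s,r]$ and the chain rule gives $w'(\tau) = (1-\b)v(\tau)^{-\b}v'(\tau)$ for a.e.\ $\tau$. Multiplying the differential inequality by the positive factor $(1-\b)v(\tau)^{-\b}$ yields $w'(\tau) \le (1-\b)v(\tau)^{-\b}\bigl(-h(\tau)v(\tau)^\b\bigr) = -(1-\b)h(\tau)$ a.e.\ on $[s,r]$. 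Integrating, and using $w(s) = v(s)^{1-\b} \le q^{1-\b}$ (monotonicity of $x \mapsto x^{1-\b}$ together with $v(s)\le q$), I obtain $v(r)^{1-\b} \le q^{1-\b} - (1-\b)\int_s^r h(\tau)\,d\tau =: \psi(r)$. Since $v(r)>0$ forces $\psi(r)>0$, raising to the positive power $1/(1-\b)$ gives $v(r) \le (\psi(r)\vee 0)^{1/(1-\b)}$, which is the claimed bound on $[s,T_0)$.

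Finally I would dispatch $[T_0,t]$. Letting $r \uparrow T_0$, continuity of $v$ and of $\psi$ gives $v(T_0)^{1-\b}=0 \le \psi(T_0)$; and for every $r \in [T_0,t]$ one has $v(r)=0$, so $v(r) \le (\psi(r)\vee 0)^{1/(1-\b)}$ holds trivially. Combining the two regions establishes the estimate on all of $[s,t]$, the degenerate cases being immediate: if $v(s)=0$ then $v \equiv 0$ by monotonicity, and if $v>0$ throughout then the second step applies directly on $[s,t]$.

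The hard part is exactly the singularity of the substitution $v \mapsto v^{1-\b}$ at $v=0$, where the chain rule breaks down; the remark that the differential inequality itself renders $v$ monotone is what dissolves this obstacle, leaving the delicate computation entirely within the classical region $\{v>0\}$ and reducing the remaining region $\{v=0\}$ to a triviality absorbed by continuity.
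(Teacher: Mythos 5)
Your proof is correct, but it takes a genuinely different route from the paper. The paper argues by comparison with the perturbed exact solution $y^\ve$ started at $q+\ve$: it introduces the first crossing time $\tau^\ve = \inf\{r : y^\ve(r) \le v(r)\}$, disposes of the case $v(\tau^\ve)=0$ by the same monotonicity observation you make, and in the case $v(\tau^\ve)>0$ derives a contradiction via the mean value theorem applied to $x\mapsto x^\b$ (with the uniform lower bound $v \ge \d$ on $[s,\tau^\ve]$ controlling the factor $\xi_r^{\b-1}$) together with Gronwall's inequality, finally letting $\ve\downarrow 0$. You instead linearize the inequality directly by the substitution $w=v^{1-\b}$, which is legitimate exactly where $v$ stays bounded away from zero; the monotonicity of $v$ confines the zero set to a terminal interval, so the substitution applies on $[s,T_0)$ and the bound is trivial on $[T_0,t]$. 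Your argument is shorter and avoids both the $\ve$-perturbation and Gronwall, at the cost of being tied to the explicit integrability of the power nonlinearity (which is in any case what produces the closed-form right-hand side); the paper's crossing-time scheme is the one that would survive replacing $y^\b$ by a general monotone nonlinearity. The one step worth stating explicitly in a written version is the chain rule $(\phi\circ v)'=\phi'(v)v'$ a.e.\ for $\phi\in C^1$ on a compact interval containing the range of the absolutely continuous $v$ restricted to $[s,r]$, $r<T_0$ --- this is standard and your appeal to it is sound.
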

\begin{proof}
  First note that since $h(r) \ge 0$, \eqref{ra_s:eqn:comp_1_1} is a monotone equation for $y(r) \ge 0$ and thus 
    $$y(r) := \left( q^{1-\b} - (1-\b) \int_s^r h(\tau)d\tau \vee 0 \right)^{\frac{1}{1-\b}}$$
  is the unique absolutely continuous nonnegative solution of \eqref{ra_s:eqn:comp_1_1}. Let $\ve >0$ and $y^\ve(r)$ be the unique nonnegative solution to \eqref{ra_s:eqn:comp_1_1} with $y^\ve(s) = q + \ve > 0$. Define $w^\ve(t) := y^\ve(t)-v(t)$ and
    \[ \tau^\ve := \inf\{r \in [s,t]\ |\ w^\ve(r) \le 0\} \wedge t= \inf\{r \in [s,t]\ |\ y^\ve(r) \le v(r)\} \wedge t.\]
  Since $y^\ve(s) = q + \ve > q \ge v(s)$ and $y^\ve, v$ are 
  continuous, we have $\tau^\ve > s$. If $\tau^\ve = t$ then nothing has to be shown. Thus suppose $\tau^\ve < t$. \\
  \textbf{Case 1:} $v(\tau^\ve) = 0$ \\
  By \eqref{ra_s:eqn:comp_1_2} $v(\cdot)$ is decreasing, hence $v(r) = 0 \le y^\ve(r)$ for all $r \in [\tau^\ve,t]$. Since also $v(r) \le y^\ve(r)$ for $r \in [s,\tau^\ve)$ this implies
    \[ v(r) \le y^\ve(r),\ \forall r \in [s,t].\]
  \textbf{Case 2:} $v(\tau^\ve) > 0$ \\
  Since $v(\cdot)$ is decreasing this implies the existence of a $\d >0$ such that $v(r) \ge \d > 0$ for $r \in [s,\tau^\ve]$. By definition of $\tau^\ve$, $v(r) \le y^\ve(r)$ on $[s,\tau^\ve)$. By the mean value theorem we further have
  \begin{align*}
    \left(w^\ve\right)'(r) \ge h(r) \left(-y^\ve(r)^\b + v(r)^\b\right) = -h(r)\b\xi_r^{\b-1} w^\ve(r),
  \end{align*}
  for almost every $r \in [s,t]$.
  We observe $ \xi_r^{\b-1} \le \d^{\b-1}$ for all $r \in [s,\tau^\ve]$. 
  Using Gronwall's inequality for absolutely continuous functions (cf.\ \cite[p.90]{T97}) 
   \[ w^\ve(r) \ge \ve e^{-\b \d^{\b-1} \int_s^r h(\tau)d\tau}, \text{ for all } r \in [s,\tau^\ve]. \]
  Hence
   \[ w^\ve(\tau^\ve) \ge \ve e^{-\b \d^{\b-1} \int_s^{\tau^\ve} h(\tau)d\tau} > 0,\]
  in contradiction to $w^\ve(\tau^\ve) \le 0$, by 
  continuity and definition of $\tau^\ve$. 

  Hence, the second case does not occur and we conclude $v(r) \le y^\ve(r),\ \forall r \in [s,t]$. Since this is true for all $\ve > 0$ we obtain $v(r) \le y(r),\ \forall r \in [s,t]$.
\end{proof}

\begin{lemma}[A-priori bound]\label{ra_s:lemma:comp_superlinear}
  Let $0 < \b < 1$, $0 < h$, $p: \R \to \R$ c\`adl\`ag, $q: \R \to \R_+$ and for each $s \in \R$ let $v(\cdot,s): [s,\infty) \to \R_+$ be an absolutely continuous subsolution of
  \begin{align}\label{ra_s:eqn:comp_2_1}
    y'(r,s) &= -h\ y(r,s)^\b + p(r),\ r \ge s
  \end{align}
  with $y(s,s) = q(s).$ We assume that $p(s)=o(|s|^\frac{\b}{1-\b})$ and $q(s)=o(|s|^{\frac{1}{1-\b}})$ for $s \to -\infty$, i.e.\ for each $\ve > 0$ there are $s_p(\ve), s_q(\ve)$ such that
  \begin{align*}
    |p(s)| &\le \ve |s|^{\frac{\b}{1-\b}}, \text{ for all } s \le s_p(\ve), \\
    q(s)   &\le \ve |s|^{\frac{1}{1-\b}}, \text{ for all } s \le s_q(\ve).
  \end{align*}
  Then for each $t \in \R$, there is an $s_0=s_0(t,s_q,h) \in \R$ and $R=R(t,p,s_p,h) > 0$ such that for all $s \le s_0$
    \[ v(t,s) \le R(t,p,s_p,h)\]
  and $R(t,p,s_p,h) = o(|t|^{\frac{1}{1-\b}})$ for $t \to -\infty$.
\end{lemma}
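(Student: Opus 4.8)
The plan is to dominate the subsolution $v(\cdot,s)$ by an explicit self-similar supersolution whose exponent is tuned to the prescribed decay rates of $q$ and $p$, and then to split $[s,t]$ into a ``far past'' piece, on which this supersolution is available, and a fixed compact piece, on which $p$ is merely bounded. The whole difficulty is that the damping $-h\,y^\b$ is sublinear ($\b<1$), so it is too weak for a naive Gronwall estimate to control large solutions; the point is to find the right scaling.

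First I would set $\gamma := \tfrac{1}{1-\b} > 1$, note that $\gamma\b = \gamma-1 = \tfrac{\b}{1-\b}$, and try the ansatz $w_\ve(r) := \ve|r|^\gamma$ for $r<0$. Since $|r|=-r$ there, a direct computation gives $w_\ve'(r) = -\ve\gamma|r|^{\gamma-1}$ and $h\,w_\ve(r)^\b = h\ve^\b|r|^{\gamma-1}$, so the supersolution inequality $w_\ve'(r) + h\,w_\ve(r)^\b \ge p(r)$ reduces, using $|p(r)| \le \ve|r|^{\gamma-1}$ for $r \le s_p(\ve)$, to the scalar condition $h\ve^{\b-1} \ge \gamma+1 = \tfrac{2-\b}{1-\b}$. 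As $\b-1<0$, the left-hand side tends to $+\infty$ as $\ve\downarrow 0$, so there is $\ve_0=\ve_0(h,\b)$ with $w_\ve$ a strict supersolution on $(-\infty,\,s_p(\ve)\wedge 0]$ for every $\ve\le\ve_0$. The exponent matching is the crux here: the rate $\gamma=\tfrac{1}{1-\b}$ for $q$ and $\gamma-1=\tfrac{\b}{1-\b}$ for $p$ are precisely the two powers that make $\ve|r|^\gamma$ balance the singular damping.

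Next I would install the ordering at the starting time. For $\ve\le\ve_0$ and $s \le s_0 := s_q(\ve)\wedge s_p(\ve)\wedge 0$ the hypothesis $q(s)\le\ve|s|^\gamma = w_\ve(s)$ holds, so a comparison argument for the dissipative (hence order-preserving) equation $y'=-h\,y^\b+p$ — carried out as in Lemma \ref{ra_s:lemma:comp_1} via a first-crossing time and Gronwall's inequality, the singularity of $y\mapsto y^\b$ at $0$ being harmless because $w_\ve>0$ on $[s,\bar s)$ — yields
$$ v(r,s) \le \ve|r|^\gamma, \qquad r \in [s,\bar s], \quad \bar s := s_p(\ve)\wedge 0. $$
This already gives the asymptotic claim: for a given $\ve$ and every $t\le\bar s$ with $s\le s_0\wedge t$ we get $v(t,s)\le\ve|t|^\gamma$; letting $\ve\to 0$ as $t\to-\infty$ (which is permissible since the constraint $s_p(\ve)\ge t$ loosens as $t$ decreases) forces $R(t)/|t|^\gamma\to 0$, i.e.\ $R(t,p,s_p,h)=o(|t|^{1/(1-\b)})$.

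Finally, for a fixed $t$ not in the far past I would fix one reference $\ve\le\ve_0$, run the previous step down to the fixed endpoint $\bar s$ to obtain $v(\bar s,s)\le\ve|\bar s|^\gamma =: q_0$ independently of $s\le s_0$, and then propagate across the compact interval $[\bar s,t]$. There $p$ is c\`adl\`ag, hence bounded by $P_t:=\sup_{[\bar s,t]}|p|$, and the crude estimate $v'(\cdot,s)\le -h\,v(\cdot,s)^\b + P_t \le P_t$ integrates to $v(t,s)\le q_0 + P_t(t-\bar s) =: R(t,p,s_p,h)$, finite and independent of $s\le s_0$. Setting $s_0$ as above completes the argument. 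I expect the only genuinely delicate point to be the construction and verification of the supersolution — the exponent matching together with the smallness threshold on $\ve$; the comparison step is a direct transcription of Lemma \ref{ra_s:lemma:comp_1}, and the finite-interval estimate is routine.
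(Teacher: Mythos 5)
Your argument is correct, but it is genuinely different from the one in the paper. You dominate $v$ by the explicit self-similar supersolution $w_\ve(r)=\ve|r|^{1/(1-\b)}$ (the exponent matching $h\ve^{\b-1}\ge\frac{2-\b}{1-\b}$ is verified correctly), invoke order-preservation of the forced equation to propagate $v\le w_\ve$ up to $\bar s=s_p(\ve)\wedge 0$, and finish with the trivial bound $v'\le P_t$ on the compact piece $[\bar s,t]$; the asymptotics $R(t)=o(|t|^{1/(1-\b)})$ then come for free by letting $\ve\downarrow 0$ as $t\to-\infty$. The paper instead works with the set $A(s)=\{r\in[s,t]:\tfrac12 v(r,s)^\b\le p(r)\}$ where the forcing dominates the damping: it shows $A(s)\ne\emptyset$ for $s$ far in the past and that $a(s)=\sup A(s)$ is bounded below by a fixed $a_1=s_p\wedge t\wedge 2t$, applying the unforced comparison Lemma \ref{ra_s:lemma:comp_1} twice, and concludes $v(t,s)\le\sup_{r\in[a_1-1,t]}(2p(r))^{1/\b}=:R(t)$. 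Your route is arguably cleaner for the growth estimate, whereas the paper's explicit formula for $R$ in terms of a running supremum of $p$ is exactly what is exploited later (proof of Proposition \ref{ra_s:prop:add_bounded_absorption}) to check measurability in $\o$ and right lower-semicontinuity in $t$ of the absorbing radius; with your $R$ those downstream verifications would have to be redone (they would still go through). One point to tighten: the comparison step is not quite ``a direct transcription'' of Lemma \ref{ra_s:lemma:comp_1}, since there the mean-value point $\xi_r$ is controlled by a positive lower bound on the \emph{sub}solution, while here $\xi_r$ lies between $v$ and $w_\ve$ and may degenerate to $0$ where $v$ vanishes, so positivity of $w_\ve$ alone does not rescue the Gronwall factor $\xi_r^{\b-1}$. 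The clean fix is to use only the one-sided monotonicity $-h(a^\b-b^\b)\le 0$ for $a\ge b\ge 0$, e.g.\ via $\frac{d}{dr}\bigl((v-w_\ve)^+\bigr)^2\le 0$ a.e., which needs no Lipschitz bound at all; also note that your $s_0$ depends on $s_p$ as well as $s_q$, a harmless deviation from the parameter list in the statement since $s_0$ still depends on $q$ only through its growth rate, which is all the application requires.
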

\begin{proof} 
  Without loss of generality we assume $p(r) \ge \d > 0$ (otherwise redefine $p(r):= p(r) \vee \d$). By scaling time by $\frac{1}{h}$ we can assume $h = 1$.

 Let $t\in \R$, $A(s) := \{ r \in [s,t] \ |\ \frac{1}{2} v(r,s)^\b \le p(r) \}$ and $a(s) = \sup A(s) \vee s$. We first show that there exists an $s_0=s_0(t,p,h) \le t$ such that $A(s) \ne \emptyset$ for all $s \le s_0$. Let $s \le t$ such that $A(s) = \emptyset$, 
  i.e. $ \frac{1}{2} v(r,s)^\b > p(r)$, for all $r \in [s,t]$. Hence, for almost every $r \in [s,t]$
  \begin{align*}
    v'(r,s) \le - v(r,s)^\b + p(r) \le - \frac{1}{2} v(r,s)^\b.
  \end{align*}
  By Lemma \ref{ra_s:lemma:comp_1} 
  \begin{align}\label{ra_s:eqn:apriori_1}
     0 < \big(2p(t) \big)^{\frac{1}{\b}} 
    &< v(t,s) 
    \le \left( q(s)^{1-\b} - \frac{1-\b}{2} (t-s) \vee 0 \right)^{\frac{1}{1-\b}}.
  \end{align} 
  For $\ve := \frac{ (1-\b)}{4}$ by assumption there exists an $s_q = s_q(\ve) \le 0$ such that $q(s)^{1-\b} \le 
 -\left(\frac{1-\b}{4}\right) s,$ for all $s \le s_q$. Hence
    \[ \left( q(s)^{1-\b} - \frac{1-\b}{2}(t-s) \vee 0  \right)^{\frac{1}{1-\b}} 
       \le \left(\frac{1-\b}{2}\right)^{\frac{1}{1-\b}} \left( \frac{s}{2} - t \vee 0 \right)^{\frac{1}{1-\b}} = 0, \]
  for $s \le s_q \wedge 2t$. Since also \eqref{ra_s:eqn:apriori_1} holds, we conclude $s \ge s_0:=s_q \wedge 2t$. Hence, for $s \le s_0$ we have $A(s) \ne \emptyset$.

  Next we prove that there exists an $a_1=a_1(t,s_p,h) \le t$ such that $a_1\le a(s) $ for all $s \le s_0$. Let $s \le s_0$, thus $A(s) \ne \emptyset$. If $a(s) =t$ then nothing is to show, thus suppose $a(s) < t$. By definition of $a(s)$ and right-continuity of $v,p$ we have 
   \[ p(r)\le\frac{1}{2} v(r,s)^\b , \text{ for all } r \in [a(s),t].\]
  Arguing as above we obtain
  \begin{align*}
    0 < \left( 2p(t) \right)^{\frac{1}{\b}} 
    &\le v(t,s) 
    \le \left(  v((a(s),s)^{1-\b} - \frac{1-\b}{2} (t-a(s)) \vee 0 \right)^{\frac{1}{1-\b}}
  \end{align*}
  Since $v$ is continuous and $p$ c\`adl\`ag we have $v(a(s),s) \le \big( 2 p(a(s)-) \big)^{\frac{1}{\b}}$. 
  For $\ve := \left(\frac{1}{2}\right)^{\frac{1-\b}{\b}} \left( \frac{1-\b}{4} \right)$ by assumption there exists an $s_p = s_p(\ve) \le 0$ such that  $p(s)^\frac{1-\b}{\b} \le 
     - \left(\frac{1}{2}\right)^{\frac{1-\b}{\b}} \left(\frac{1-\b}{4}\right) s,$ for all $s \le s_p$. Hence
    \[ \left(  v((a(s),s)^{1-\b} - \frac{1-\b}{2} (t-a(s)) \vee 0 \right)^{\frac{1}{1-\b}} \le \left(\frac{1-\b}{2}\right)^{\frac{1}{1-\b}} \left( \frac{a(s)}{2} - t \vee 0 \right)^{\frac{1}{1-\b}} = 0, \]
  if $a(s) \le s_p \wedge 2t$. Thus, we conclude $a_1 :=s_p \wedge t \wedge 2t \le a(s) $ for all $s \le s_0$.
  
  Since on $[a(s),t]$ we have $\frac{1}{2} v(r,s)^\b \ge p(r)$, we conclude for almost every $r \in [a(s),t]$
  \begin{align*}
    v'(t,s) \le - v(r,s)^\b + p(r) \le 0.
  \end{align*}
  Hence
  \begin{align*}
    v(t,s) \le v(a(s),s) \le \big( 2 p(a(s)-) \big)^{\frac{1}{\b}} \le \sup_{r \in [a_1-1,t]} \big( 2 p(r) \big)^{\frac{1}{\b}} =: R(t,p,h),
  \end{align*}
  for all $s \le s_0$. 
\end{proof}

\subsection{Bounded absorption (Proposition \ref{ra_s:prop:add_bounded_absorption})}
\begin{proof}
  We prove $\mcD^\a$-bounded absorption for $Z(t,s;\o)x$. By \eqref{ra_s:eqn:transf_coerc} we obtain
  \begin{align*}
    2\Vbk{A_\o(t,v), v}  &\le - \tilde{c} \|v\|_V^\a + \tilde{f}(t),
  \end{align*}
  with $\tilde{c} > 0$ and for some $C > 0$
  \begin{align*}
  \tilde{f}(t) &:= C\left( f(t)  + \|N_t\|_V^{\a} \right). 
  \end{align*}
  By the chain-rule
  \begin{align*}
    \frac{d}{dt}\|Z(t,s;\o)x\|_H^2 
    &= 2 \Vbk{A_\o(t,Z(t,s;\o)x),Z(t,s;\o)x} dr \\
    &\le - \tilde{c} (\|Z(t,s;\o)x\|_H^2)^{\frac{\a}{2}} + \tilde{f}(t),
  \end{align*}
  for almost all $t \in [s,\infty)$. Since $\|N_r\|_V = o(|r|^{\frac{1}{2-\a}})$ we know $\tilde{f}(r) = o(|r|^{\frac{\a}{2-\a}})= o(|r|^\frac{\b}{1-\b})$ for $r \to -\infty$ and $\b := \frac{\a}{2}$ for all $\o \in \O_0$. Let $D \in \mcD^\a$ and $x_s(\o) \in D_s(\o)$. We apply Lemma \ref{ra_s:lemma:comp_superlinear} with $v(t,s) := \|Z(t,s;\o)x_s(\o)\|_H^2$, $p(r) := \tilde{f}(r)$, $q(s) := |D_s(\o)|^2$, which is possible since $q(s) = o(|s|^\frac{1}{1-\b})$. Hence, for all $t \in \R$, $\o \in \O_0$ there is an absorption time $s_0=s_0(t,D,\o)$ and an $R=R(t,\o)$ such that
    \[ \|Z(t,s;\o)x_s(\o)\|_H^2 \le R(t,\o), \]
  for all $s \le s_0$. Since $s_0$ only depends on $q(s) := |D_s(\o)|^2$ this implies
    \[ \|Z(t,s;\o)D_s(\o)\|_H^2 \le R(t,\o), \]
  for all $s \le s_0$, i.e. $\mcD^\a$-absorption for $Z(t,s;\o)$
  . This implies
  \begin{align*}
    \|S(t,s;\o)D_s(\o)\|_H
    &= \|Z(t,s;\o)\left(D_s(\o) - N_s(\o) \right) + N_t(\o)\|_H \\
    &\le \|Z(t,s;\o)\left(D_s(\o) - N_s(\o) \right)\|_H + \|N_t(\o)\|_H \\
    &\le \sqrt{R(t,\o)} + \|N_t(\o)\|_H =: \td R(t,\o),
  \end{align*}
  for all $s \le \td s_0$, i.e. $\mcD^\a$-absorption for $S(t,s;\o)$ by the family of bounded sets 
  $$F(t,\o) :=
    \begin{cases}
      B(0,\td R(t,\o))&,\ \o\in \O_0 \\
      \{0\}&,\text{ otherwise.}
    \end{cases} $$

  The set $F$ is measurable iff $\o \mapsto R(t,\o)$ is measurable for each $t \in \R$. By the proof of Lemma \ref{ra_s:lemma:comp_superlinear} we have
    $$R(t,\o) = \sup_{r \in [a_1(t,\o)-1,t]} \left(2 \td f(r,\o) \right)^\frac{1}{\b}$$
  with $a_1(t,\o) = s_p \wedge t\wedge 2t$ and $s_p = s_p\left(\left(\frac{1}{2}\right)^\frac{1-\b}{\b}\frac{1-\b}{4}\right)$. Note
    $$ s_p(\ve) = \inf\{s\in\R_+ |\ \sup_{|r| \ge s} \frac{|p(r)|^\frac{1-\b}{\b}}{|r|} \le \ve\}.$$ 
  To prove measurability of $s_p(\ve)$ for $p = \td f$ we note
    $$\{s_p(\ve) < c\} = \bigcup_{n \in \N}\left\{\sup_{|r| \ge c-\frac{1}{n}}\frac{|\td f(r,\cdot)|^\frac{1-\b}{\b}}{|r|} \le \ve \right\} = \bigcup_{n \in \N}\left\{\sup_{|r| \ge c-\frac{1}{n},\ r \in \Q}\frac{|\td f(r,\cdot)|^\frac{1-\b}{\b}}{|r|} \le \ve \right\}.$$
  Measurability of $\td f$ thus implies measurability of $s_p(\ve)$. By right-continuity and measurability of $\td f$ the map
    $$ (s,\o) \mapsto \sup_{r \in [s,t]} \left(2 \td f(r,\o) \right)^\frac{1}{\b}$$
  is measurable in $\o$ and right-continuous in $s$. Hence, $R(t,\o)$ is measurable.

  Right lower-semicontinuity of $F$ is equivalent to $\limsup_{n \to \infty} R(t_n,\o) \ge R(t,\o)$ for each sequence $t_n \downarrow t$ which follows from right-continuity of $\td f(\cdot,\o)$ and $a_1 = s_p \wedge t \wedge 2t$. 

  By Lemma \ref{ra_s:lemma:comp_superlinear} $R(t,\o)=o(|t|^\frac{1}{1-\b})$ and thus $\td R(t,\o)=o(|t|^\frac{1}{2-\a})$ for $t \to -\infty$ and $\o \in \O_0$. This implies $F \in \mcD^\a$.
\end{proof}


\subsection{Existence of random attractors (Theorem \ref{ra_s:thm:ra_add})}

\begin{proof}[Proof of Theorem \ref{ra_s:thm:ra_add}: ]
  We prove compact absorption for $S(t,s;\o)$. Let $t \in \R$ and $\o \in \O$. By Proposition \ref{ra_s:prop:add_bounded_absorption} we know that there is a $\mcD^\a$ absorbing set $\{F(t,\o)\}_{t \in \R,\ \o \in \O} \in \mcD^\a$. Let
    \[ K(t,\o) := \overline{S(t,t-1;\o)F(t-1,\o)}. \]
  Since $S(t,s;\o)$ is a compact flow, $K(t,\o)$ is compact. Using \eqref{ra_s:eqn:transf_coerc}, $f(t) = o(|t|^\frac{\a}{2-\a})$ and $(S3)$ we observe $K \in \mcD^\a$. Furthermore $K(t,\o)$ is $\mcD^\a$-absorbing:
  \begin{align*}
    S(t,s;\o)D(s,\o) 
    &= S(t,t-1;\o)S(t-1,s;\o)D(s,\o) \\
    &\subseteq S(t,t-1;\o)F(t-1,\o) \subseteq K(t,\o),
  \end{align*}
  for $s \le s_0$ and $\o \in \O_0$. By Theorem \ref{ra_s:thm:suff_cond_attr} this yields the existence of a random $\mcD^\a$-attractor $\mcA^\a$ for $S(t,s;\o)x$ with $\mcA^\a(t,\o) \subseteq K(t,\o)$ for all $t \in \R$, $\o \in \O_0$. In particular, $\mcA^\a \in \mcD^\a$.

  If $A$ is measurable then $S(t,s;\o)x$ is a measurable stochastic flow. Since $s \mapsto S(t,s;\o)x$ is continuous locally uniformly in $t$ and $x$, right lower-semicontinuity of $F$ implies right lower-semicontinuity for $K$. Hence, by Theorem \ref{ra_s:thm:suff_cond_attr} $\mcA^\a$ is a random closed set.
  
  Now assume $A$ to be strictly stationary. Then $S(t,s;\o)x$ is a cocycle. The system of all bounded deterministic sets $\mcD^b$ satisfies condition (i) in Theorem \ref{ra_s:thm:suff_cond_attr}. Hence, there is a measurable, strictly stationary random $\mcD^b$-attractor $\mcA^b$.
\end{proof}


\subsection{Singleton Random Attractors (Theorem \ref{ra_s:thm:singleton_RA_general})}

The following Lemma is closely related to \cite[Proposition 1]{CS04}
\begin{lemma}\label{ra_s:lemma:mean_conv}
  Let $V$ be a Banach space with cone $V_+$ and $B \in \mcB(V)$. Let $X$, $Y$ be $(\mcB(\R_+)\otimes\mcF,\mcB(B))$-measurable $B$-valued processes such that $X_t \ge Y_t$ $\P$-almost surely. Assume that $\frac{1}{T} \int_0^T \mcL(X_t) dt \rightharpoonup \mu$ and  $\frac{1}{T} \int_0^T \mcL(Y_t) dt \rightharpoonup \mu$ weakly in $\mcM_1(B)$ where $\mu$ is a probability measure on $B$. Then 
    \[ \frac{1}{T} \int_0^T \P[\|X_t-Y_t\|_V \ge \d]dt \to 0\]
  for $T \to \infty$ and all $\d > 0$. If in addition $\|X_t-Y_t\|_V$ is non-increasing then $X_t-Y_t \to 0$ $\P$-almost surely.
\end{lemma}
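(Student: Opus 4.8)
The plan is to lift everything to the joint time-averaged laws. For $T>0$ set $\rho_T:=\frac1T\int_0^T\mcL(X_t,Y_t)\,dt\in\mcM_1(V\times V)$; its two marginals are precisely $\frac1T\int_0^T\mcL(X_t)\,dt$ and $\frac1T\int_0^T\mcL(Y_t)\,dt$, both converging weakly to $\mu$ by hypothesis, and since $X_t\ge Y_t$ $\P$-a.s.\ each $\rho_T$ is carried by the closed cone $C:=\{(x,y)\in V\times V:\ x-y\in V_+\}$. On the separable Banach space $V$ a weakly convergent sequence of laws is tight, so both marginal families are tight and hence so is $\{\rho_T\}_{T\ge1}$. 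By Prokhorov's theorem every sequence $T_n\to\infty$ admits a subsequence with $\rho_{T_n}\rightharpoonup\rho$; continuity of the coordinate projections shows that both marginals of $\rho$ equal $\mu$, and the portmanteau theorem applied to the closed set $C$ gives $\rho(C)=1$.

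The heart of the proof --- and the step I expect to be the main obstacle, since here $V_+$ need not be solid --- is the coupling statement: \emph{any $\rho\in\mcM_1(V\times V)$ with both marginals equal to $\mu$ and $\rho(C)=1$ is concentrated on the diagonal $\{x=y\}$.} I would argue through the dual cone $V_+^*:=\{\ell\in V^*:\ \ell\ge0\text{ on }V_+\}$. Fixing $\ell\in V_+^*$ and a bounded, strictly increasing $\psi\in C(\R)$ (say $\psi=\arctan$), the function $(x,y)\mapsto\psi(\ell(x))-\psi(\ell(y))$ is $\rho$-a.s.\ nonnegative, because $x-y\in V_+$ forces $\ell(x)\ge\ell(y)$; yet its $\rho$-integral vanishes since both marginals coincide with $\mu$. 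Hence $\ell(x)=\ell(y)$ $\rho$-a.s.\ for each fixed $\ell\in V_+^*$. To promote this to $x=y$ $\rho$-a.s.\ I would produce a \emph{countable} determining family: by the bipolar theorem for the closed convex cone $V_+$, every $v\in V_+\setminus\{0\}$ satisfies $\ell(v)>0$ for some $\ell\in V_+^*$, so the relatively open sets $\{v\in V_+:\ \ell(v)>0\}$ cover $V_+\setminus\{0\}$; as this set is separable metric (hence Lindel\"of) a countable subfamily $\{\ell_n\}\subseteq V_+^*$ suffices. Applying the previous step to every $\ell_n$ yields $\ell_n(x-y)=0$ for all $n$ simultaneously $\rho$-a.s., and together with $x-y\in V_+$ this forces $x-y=0$.

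Granting the coupling statement, the first assertion follows by contradiction. Note $\frac1T\int_0^T\P[\|X_t-Y_t\|_V\ge\delta]\,dt=\rho_T\big(\{\|x-y\|_V\ge\delta\}\big)$, where joint measurability of $(t,\o)\mapsto\|X_t-Y_t\|_V$ makes the time-integral meaningful via Tonelli. If this quantity did not tend to $0$, some subsequence would stay $\ge\varepsilon>0$; passing to a weak limit $\rho$ as above and invoking the portmanteau theorem for the closed set $\{\|x-y\|_V\ge\delta\}$ would give $\rho\big(\{\|x-y\|_V\ge\delta\}\big)\ge\varepsilon$, contradicting that $\rho$ lives on the diagonal.

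Finally, for the second assertion put $\eta_t:=\|X_t-Y_t\|_V$. If $t\mapsto\eta_t$ is non-increasing then $\eta_\infty:=\lim_{t\to\infty}\eta_t$ exists $\P$-a.s.\ and $\eta_t\ge\eta_\infty$, whence $\P[\eta_t\ge\delta]\ge\P[\eta_\infty\ge\delta]$ for all $t$ and all $\delta>0$. Averaging over $[0,T]$ gives $\frac1T\int_0^T\P[\eta_t\ge\delta]\,dt\ge\P[\eta_\infty\ge\delta]$, and since the left-hand side tends to $0$ by the first part we conclude $\P[\eta_\infty\ge\delta]=0$ for every $\delta>0$, i.e.\ $\eta_\infty=0$ $\P$-a.s., which is exactly $X_t-Y_t\to0$ $\P$-a.s.
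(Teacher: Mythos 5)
Your proof is correct, but it takes a genuinely different route from the paper's. The paper fixes a single positive functional $l$, runs a truncation/Markov-inequality computation on the time-averaged expectations of $F_N\circ l(X_t)$ and $F_N\circ l(Y_t)$ to get $\frac1T\int_0^T\P[l(X_t-Y_t)\ge\d]\,dt\to0$, and then upgrades from $l$ to the norm by invoking the quantitative estimate of \cite[Lemma 1]{CS04}: on a compact subset $K_\ve$ of the cone one can choose a strictly positive $l$ with $\|v\|_V\le\frac\d2+C_{K_\ve,\d/2}\,l(v)$. You instead pass to the joint time-averaged laws, extract subsequential weak limits, and reduce everything to a soft coupling rigidity statement --- a self-coupling of $\mu$ carried by the closed order cone must sit on the diagonal --- which you prove via a countable separating family in the dual cone (bipolar theorem plus Lindel\"of). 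Your route avoids the citation of \cite[Lemma 1]{CS04} and the bookkeeping with $F_N$, and it cleanly isolates the underlying principle that equality in law plus a.s.\ domination forces a.s.\ equality; the paper's route is more quantitative at the intermediate stage and stays closer to the statement actually needed. Two small points you should make explicit: your bipolar step (existence of $\ell\in V_+^*$ with $\ell(v)>0$ for $v\in V_+\setminus\{0\}$) uses pointedness, $V_+\cap(-V_+)=\{0\}$, which is part of the paper's definition of a cone; and both your argument and the paper's tacitly require $V$ separable (Prokhorov in one direction, Lindel\"of in yours), so you are not assuming more than the original proof does. Your treatment of the final monotone step, via the a.s.\ limit $\eta_\infty$ and the bound $\P[\eta_t\ge\d]\ge\P[\eta_\infty\ge\d]$, is a slight streamlining of the paper's subsequence extraction and is equally valid.
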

\begin{proof}
  We can assume $B =V$ by extending $\mu$ by $0$ to all of $V$. Let $l \in V_+^*$ be a strictly positive linear functional, i.e.\ $l(v) > 0$ for $v \in V_+\setminus\{0\}$. We first prove 
    \[ \frac{1}{T} \int_0^T \P[l(X_t-Y_t) \ge \d]dt \to 0\]
  for $T \to \infty$ and for all $\d > 0$. It is sufficient to prove this in the case $\|l\|_{V^*} = 1$. Since $\frac{1}{T} \int_0^T \mcL(X_t) dt, \frac{1}{T} \int_0^T \mcL(Y_t) dt$ are weakly convergent, by Prokhorov's Theorem for each $\ve > 0$ there is an $N_\ve > 0$ such that 
    \[\frac{1}{T} \int_0^T \P[\|X_t\|_V \le N] dt  \wedge \frac{1}{T} \int_0^T \P[\|Y_t\|_V \le N] dt 
      \ge 1-\ve\]
  for all $N\ge N_\ve$ and all $T \ge 0$. Since $l(X_t) \le \|X_t\|_V$ we have $\P[l(X_t) \le N] \ge \P[\|X_t\|_V \le N]$. 
  Let 
    $$ F_N(r):= 
    \begin{cases}
        r  & \text{, for } |r| \le N \\
        sgn(r)N         & \text{, for } |r| > N.
    \end{cases} $$
  We observe
  \begin{align*}
    \frac{1}{T}\int_0^T \P[l(X_t-Y_t) \ge \d]dt 
    &= \frac{1}{T}\int_0^T \P[l(X_t)-l(Y_t) \ge \d]dt \\
    &= \frac{1}{T}\int_0^T \P[l(X_t)-l(Y_t) \ge \d, l(X_t) \vee l(Y_t) \le N]dt \\
      &\hskip15pt +  \frac{1}{T}\int_0^T \P[l(X_t)-l(Y_t) \ge \d, l(X_t) \vee l(Y_t) > N]dt\\
    &\le \frac{1}{T}\int_0^T \P[F_N \circ l(X_t)- F_N \circ l(Y_t) \ge \d, l(X_t) \vee l(Y_t) \le N]dt \\
      &\hskip15pt + \frac{1}{T}\int_0^T \P[l(X_t) > N]dt + \frac{1}{T}\int_0^T \P[l(Y_t) > N]dt \\
    &= \frac{1}{T}\int_0^T \frac{1}{\d} \E[F_N \circ l(X_t)]dt - \frac{1}{T}\int_0^T \frac{1}{\d} \E[F_N \circ l(Y_t)]dt + 2\ve 
    \le 3\ve,
  \end{align*}
  for $N \ge N_\ve$ and all $T \ge T_\ve$. By Prokhorov's Theorem for any $\ve > 0$ we can choose a compact set $\td K_\ve \subseteq V$ such that
    \[\frac{1}{T} \int_0^T \P[X_t \in \td K_\ve \text{ and } Y_t \in \td K_\ve] dt \ge 1-\ve,\]
  for all $T \ge 0$. Let $K_\ve = \left( \td K_\ve - \td K_\ve \right) \cap V_+$. Then
    \[\frac{1}{T} \int_0^T \P[X_t-Y_t \in K_\ve] dt \ge 1-\ve,\]
  for all $T \ge 0$. Hence, (cf.\ \cite[Lemma 1]{CS04}) there exists a strictly positive linear functional $l \in V^*$ such that
  \begin{align*}
    \frac{1}{T} \int_0^T \P[\|X_t-Y_t\|_V \ge \d] dt
    &\le \frac{1}{T} \int_0^T \P\left[ \frac{\d}{2} + C_{K_\ve,\frac{\d}{2}} l(X_t-Y_t) \ge \d \right]dt + \ve
      \le 2\ve,
  \end{align*}
  for all $T \ge T_\ve$.

  Let now $\|X_t-Y_t\|_V$ be non-increasing. We have
    \[ \frac{1}{T} \int_0^T \P[\|X_t-Y_t\|_V \ge \d]dt \to 0\]
  for $T \to \infty$ and all $\d > 0$. Let $\d > 0$. Then there is a sequence $T_n \to \infty$ such that $\P[\|X_{T_n}-Y_{T_n}\|_V \ge \d] \to 0$. Since $\|X_t-Y_t\|_V$ is non-increasing this implies $X_t-Y_t \to 0$ in probability and for the same reason $\P$-almost surely.
\end{proof}

\begin{proof}[Proof of Theorem \ref{ra_s:thm:singleton_RA_general}:]
  Let $\ve > 0$. Since $K(\o)$ is a random compact set, we can find a deterministic compact set $K_\ve$ such that
    \[ \P[K_\ve \supseteq K] \ge 1-\ve, \]
  by \cite[Proposition 2.15]{C02-2}. By compactness of $K_\ve$ and density of $S \subseteq H$ there is a finite $\ve$-net $K_1, ... , K_N \in S$ for $K_\ve$ (not necessarily $K_i \in K_\ve$). Since $\vp$ is contractive we have
    \[ \|\vp(t,\o)x-\vp(t,\o)y\|_H \le \|x-y\|_H, \]
  for all $t \in \R_+$, $\o \in \O$ and $x,y \in H$. This implies that $\vp(t,\o)K_1, ... , \vp(t,\o)K_N$ is an $\ve$-net for $\vp(t,\o)K_\ve$. By the existence of upper bounds in $S$, there is an upper bound $\bar K \in S$ satisfying $\bar K \ge K_i$ for all $i = 1,...,N$. Monotonicity of the RDS $\vp$ on $S$ yields $\vp(t,\o)\bar K \ge \vp(t,\o)K_i$. By weak-$*$ mean ergodicity of the associated Markovian semigroup and Lemma \ref{ra_s:lemma:mean_conv} this implies $\vp(t,\o)\bar K - \vp(t,\o)K_i \to 0$ $\P$-almost surely. Hence, for a.a.\ $\o \in \O$ there is a $t_{\ve,\o}$ such that $\|\vp(t,\o)\bar K(\o) - \vp(t,\o)K_i(\o)\| \le \ve$ for all $t \ge t_{\ve,\o}$, $i=1,...,N$. We conclude
  \begin{align*} 
    \text{diam}(\vp(t,\o)K_\ve) 
    &= \sup\{ \|\vp(t,\o)a-\vp(t,\o)b\|_H|\ a,b \in K_\ve\} \\
    &\le \sup\{ \|\vp(t,\o)K_i-\vp(t,\o)K_j\|_H|\ i,j = 1, ... ,N \} + 2\ve \\
    &\le 4\ve,
  \end{align*}
  for all $t \ge t_{\ve,\o}$, i.e. $\text{diam}(\vp(t,\o)K_\ve) \to 0$ almost surely. For any $\d>0$ we conclude
  \begin{align*} 
    \P[\text{diam}(\vp(t,\o)K(\o)) \ge \d] \le \ve + \P[\text{diam}(\vp(t,\o)K_\ve) \ge \d] \le 2 \ve,
  \end{align*}
  for all $t \ge t_\ve$, i.e. \text{diam}$(\vp(t,\o)K(\o)) \to 0$ in probability. By contractivity of $\vp$, \text{diam}$(\vp(t,\o)K(\o))$ is non-increasing. This implies \text{diam}$(\vp(t,\o)K(\o)) \to 0$ $\P$-almost surely.
\end{proof}

\begin{proof}[Proof of Corollary \ref{ra_s:cor:strongly_mixing}]
  It is sufficient to consider $\nu = \d_x$ for $x \in H$. Let $x \in H$, $f:H \to \R$ Lipschitz. Then 
   \begin{align*}
      |P_t f(x) - \mu(f)| 
      &= \left|\int_H \E f(\vp(t,\cdot)x) - f(\vp(t,\cdot)y) d\mu(y) \right| \\
      &\le \int_H \E |f(\vp(t,\cdot)x) - f(\vp(t,\cdot)y)| d\mu(y) \\
      &\le Lip(f) \int_H \E \|\vp(t,\cdot)x - \vp(t,\cdot)y\|_H d\mu(y).
   \end{align*}
   By Theorem \ref{ra_s:thm:singleton_RA_general} we have $\|\vp(t,\o)x-\vp(t,\o)y\| \to 0$ $\P$-almost surely for $t \to \infty$ and by contractivity $\|\vp(t,\cdot)x - \vp(t,\cdot)y\|_H \le \|x-y\|_H$. Since $\mu \in \mcM_1(H)$, dominated convergence yields
   \begin{align*}
     |P_t f(x) - \mu(f)| \to 0,
   \end{align*}
   for $t \to \infty$.  
\end{proof}

\begin{proof}[Proof of Corollary \ref{ra_s:thm:singleton_RA_add}]

Let
  \[ P_t f(x) = \E f(S(t,0;\cdot)x) = \E f(\vp(t,\cdot)x), \]
for $f \in B_b(H)$ and $x \in H$. The unique existence of variational solutions $X(t,0;\o)x$ is well known (cf.\ \cite[Theorem 4.2.4]{PR07}). By pathwise uniqueness of the solution, $S(t,0;\o)x$ and $X(t,0;\o)x$ are indistinguishable and thus the associated Markovian semigroups on $\mcB_b(H)$ coincide and are Feller (cf.\ \cite[Proposition 4.3.5]{PR07}). Monotonicity of the drift implies contractivity of $\vp$. By \cite[Theorem 1.3]{LT11} there is a unique invariant measure $\mu \in \mcM_1$ with $ \int_H \|x\|_V^\a d\mu(x) < \infty$ and for all Lipschitz functions $F: H \to \R$
  $$|P_tF(x)-P_tF(y)|\le \frac{C Lip(F) \|x-y\|_H}{\sqrt{t}}\left(1+ \frac{\|x\|_H}{\sqrt{t}}+ \frac{\|y\|_H}{\sqrt{t}} \right)^\frac{\d}{\a},$$
where $Lip(F)$ is the Lipschitz constant of $F$. In order to obtain bounds on higher moments of $\mu$ we note (as in \cite[Lemma 2.2]{LR10})
\begin{align*}
  \frac{1}{t}\E \int_0^t \|X(r,0;\cdot)0\|_H^k \|X(r,0;\cdot)0\|_V^\a dr \le C,
\end{align*}
for all $k \ge 4$, $t \ge 0$. Since $\mu$ is obtained as the weak limit of $\mu_n := \frac{1}{n}\int_0^n \mcL(X(r,0;\cdot)0)dr$ this implies
  $$ \int_H \|x\|_H^k \|x\|_V^\a d\mu(x) < \infty.  $$
In particular $\mu(\|\cdot\|_H^k) < \infty$ for all $k \ge 1$. We obtain
\begin{align*}
  |P_tF(x)- \mu(F)| 
  &\le \frac{C Lip(F) (\|x\|_H+1)}{\sqrt{t}} \left(1+ \frac{\|x\|_H^\frac{\d}{\a}}{t^\frac{\d}{2\a}}+ \frac{1}{t^\frac{\d}{2\a}} \right) \to 0,
\end{align*}
for $t \to \infty$ which implies weak$*$-mean ergodicity.
%
%
By Theorem \ref{ra_s:thm:ra_add} there is a measurable compact random attractor $\mcA$ and Theorem \ref{ra_s:thm:singleton_RA_general} yields $\text{diam}(\mcA(\t_t\o)) = \text{diam}(\vp(t,\o)\mcA(\o)) \to 0$. Since $\t_t$ is $\P$ preserving this implies $\text{diam}(\mcA(\o)) = 0$ almost surely, i.e.\ $\mcA(\o)$ is a single point.
\end{proof}


\subsection{Finite time extinction (Theorem \ref{ra_s:thm:finite_extinction})}
We will need the following 
\begin{lemma}\label{ra_s:lemma:integral_div}
  Let $\b$ be a real-valued Brownian motion and $q,s \in \R$. Then
    \[ \int_s^t e^{q (\b_r-\b_s)} dr \xrightarrow{t \to \infty} \infty,\]
  almost surely.
\end{lemma}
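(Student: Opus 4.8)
The plan is to show that the integral $\int_s^t e^{q(\b_r-\b_s)}\,dr$ diverges almost surely by exploiting the recurrence properties of Brownian motion. Without loss of generality I would set $s=0$, since $\b_r-\b_s$ has the same law as a Brownian motion started at $0$ (and for the pathwise statement we may absorb the shift into the time variable). The key observation is that divergence of the integral is governed by the behaviour of the integrand on sets where $\b_r$ is not too negative: if $q>0$ we need $\b_r$ to be large infinitely often, while if $q<0$ we need $\b_r$ to be very negative infinitely often, and the case $q=0$ is trivial since the integrand is identically $1$.

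The main step is to reduce everything to a single elementary fact about Brownian motion: $\limsup_{r\to\infty}\b_r=+\infty$ and $\liminf_{r\to\infty}\b_r=-\infty$ almost surely (recurrence/oscillation of the one-dimensional Brownian path). Concretely, for $q>0$ I would argue as follows. By recurrence, almost surely there is an increasing sequence of times $r_n\to\infty$ with $\b_{r_n}\ge n$. By path continuity, around each such $r_n$ there is an interval $I_n$ of positive length on which $\b_r\ge n-1$, so $e^{q\b_r}\ge e^{q(n-1)}$ on $I_n$. A more robust way to avoid controlling the lengths $|I_n|$ is to use the strong Markov property: define stopping times $\tau_n:=\inf\{r:\b_r\ge n\}$, which are finite a.s., and after each $\tau_n$ the process spends, in law, a fixed positive expected amount of time above level $n-1$ before dropping below it; summing the resulting independent-in-law contributions forces the integral to exceed any bound. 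For $q<0$ the identical argument applies with the roles of $+\infty$ and $-\infty$ interchanged, using $\liminf_{r\to\infty}\b_r=-\infty$.

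The cleanest self-contained route, which I would actually write up, is a two-line comparison. For any level $L$, by oscillation of the path the set $\{r\ge 0:\ \operatorname{sgn}(q)\,\b_r\ge L\}$ is unbounded and (again by continuity) contains infinitely many disjoint intervals of length bounded below in distribution; hence
\[
  \int_0^t e^{q\b_r}\,dr\ \ge\ e^{qL\,\operatorname{sgn}(q)}\cdot(\text{total length of such intervals in }[0,t]),
\]
and the right-hand side tends to $\infty$ as $t\to\infty$. Choosing $L$ arbitrarily large then gives the claim. I expect the only genuine obstacle to be making the "infinitely many intervals of non-negligible length" statement rigorous without circularity: the naive pathwise argument needs a uniform lower bound on the lengths $|I_n|$, which a single continuous path does not automatically provide. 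This is precisely where the strong Markov property is the right tool, since it converts the local time spent above a level into a sum of i.i.d.\ positive excursion-time contributions, whose partial sums diverge by the strong law of large numbers. Everything else is routine, and the $q=0$ case is immediate.
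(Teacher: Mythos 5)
Your proposal is correct in substance but takes a genuinely different route from the paper. The paper's proof first reduces, via Brownian scaling, to the functional $A_t=\int_0^t e^{2\beta_r}\,dr$, then invokes the explicit density of $A_t$ from \cite{AMS01} to show $\mathbb{P}[A_t\le K]\to 0$ for every $K$ (divergence in probability), and finally upgrades to almost sure divergence by monotonicity of $t\mapsto A_t$. You argue pathwise from recurrence and the strong Markov property, which is more elementary: no special-function identity is needed, and the argument would apply to any continuous strong Markov process with oscillating paths. One detail in your sketch does need repair: with $\tau_n=\inf\{r:\beta_r\ge n\}$ the excursion intervals ``above level $n-1$'' are neither disjoint nor independent (the path may reach level $n+1$ before falling back to $n-1$, so the $(n{+}1)$-st interval can sit inside the $n$-th), and the downcrossing durations have infinite mean, so the strong law as you invoke it does not apply directly. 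The standard fix is a renewal construction at a single pair of levels: for $q>0$ set $\tau_1=\inf\{r\ge 0:\beta_r=1\}$, $\sigma_n=\inf\{r>\tau_n:\beta_r=0\}$, $\tau_{n+1}=\inf\{r>\sigma_n:\beta_r=1\}$; all these times are finite a.s.\ by recurrence, the intervals $[\tau_n,\sigma_n]$ are disjoint, the integrand is $\ge 1$ on each of them, and by the strong Markov property their lengths are i.i.d.\ and strictly positive, so their partial sums diverge a.s.\ (second Borel--Cantelli suffices; no finite mean is required). With this modification your argument is complete; the case $q<0$ follows by symmetry and $q=0$ is trivial, as you note. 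What the paper's computation buys in exchange is quantitative information on the law of $A_t$, none of which is actually needed for the lemma.
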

\begin{proof}
  We first prove convergence in probability. By the scaling property of Brownian motion we have $q (\b_{r+s}-\b_s) = q \b_{r} = 2 \b_{\frac{q^2}{4} r}$ in law. Hence,
    \[ \int_s^t e^{q (\b_r-\b_s)} dr = \int_0^{t-s} e^{q (\b_{r+s}-\b_s)} dr = \int_0^{t-s} e^{ 2 \b_{\frac{q^2}{4} r} } dr = \frac{4}{q^2} \int_0^{\frac{q^2}{4} ({t-s})} e^{ 2 \b_r},\]
  in law. Therefore, it is sufficient to prove divergence of
    \[ A_t := \int_0^{t} e^{ 2 \b_r},\]
  in probability.  By \cite{AMS01} we know that $A_t$ has the density function
    \[ \P[A_t \in du] = \frac{du}{\sqrt{2 \pi u^3}} \frac{1}{\sqrt{2\pi t}} \int_\R \cosh(\xi)e^{-\frac{\cosh(\xi)^2}{2u}-\frac{(\xi+\frac{i \pi}{2})^2}{2t}}d\xi. \] 
  Let $K > 0$. Then
    \begin{align*}
       \P[A_t \le K] 
       &= \int_\R \int_0^K \frac{1}{\sqrt{2 \pi u^3}} \frac{1}{\sqrt{2\pi t}}  \cosh(\xi)e^{-\frac{\cosh(\xi)^2}{2u}-\frac{(\xi+\frac{i \pi}{2})^2}{2t}} du d\xi.
    \end{align*}
    Taking the absolute value and substituting $u \rightsquigarrow \frac{\cosh(\xi)}{\sqrt{u}}$ in the inner integral yields
    \begin{align*}
       \P[A_t \le K] 
       &\le 
       2 e^{\frac{\pi^2}{8t}} \int_\R \int_{\frac{\cosh(\xi)}{\sqrt{K}}}^\infty \frac{e^\frac{-u^2}{2}}{\sqrt{2 \pi}}  \frac{e^{-\frac{\xi^2}{2t}}}{\sqrt{2\pi t}}  du d\xi.
    \end{align*}
    Let now $\ve > 0$ and choose $R >0$ such that $\int_{\frac{\cosh(R)}{\sqrt{K}}}^\infty \frac{e^\frac{-u^2}{2}}{\sqrt{2 \pi}} du \le \frac{\ve}{8}. $ Then choose $t$ so large that $\sup_{\xi \in [-R,R]} \frac{e^{-\frac{\xi^2}{2t}}}{\sqrt{2\pi t}} \le \frac{\ve}{16 R}$ and $2 e^{\frac{\pi^2}{8t}} \le 4$. We obtain
    \begin{align*}
       \P[A_t \le K] 
       &=  2 e^{\frac{\pi^2}{8t}} \left( \int_{-R}^R \int_{\frac{\cosh(\xi)}{\sqrt{K}}}^\infty \frac{e^\frac{-u^2}{2}}{\sqrt{2 \pi}}  \frac{e^{-\frac{\xi^2}{2t}}}{\sqrt{2\pi t}}  du d\xi +  \int_{[-R,R]^c} \int_{\frac{\cosh(\xi)}{\sqrt{K}}}^\infty \frac{e^\frac{-u^2}{2}}{\sqrt{2 \pi}}  \frac{e^{-\frac{\xi^2}{2t}}}{\sqrt{2\pi t}}  du d\xi \right) \\
       &\le 4 \left(\frac{\ve}{8} \int_{\frac{\cosh(1)}{\sqrt{K}}}^\infty \frac{e^\frac{-u^2}{2}}{\sqrt{2 \pi}} du +  \frac{\ve}{8}\int_{[-R,R]^c} \frac{e^{-\frac{\xi^2}{2t}}}{\sqrt{2\pi t}} d\xi \right) \le \ve,
    \end{align*}
    for all $t$ sufficiently large, i.e.\ $\P[A_t \le K]  \to 0,$ for $t \to \infty$. Hence, the same holds for $\int_s^t e^{q (\b_r-\b_s)} dr $. We conclude that for almost all $\o \in \O$ there exists a $t_0$ (depending on $\o$) such that $\int_s^{t_0} e^{q (\b_r-\b_s)} dr  > K$. Since $\int_s^{t} e^{q (\b_r-\b_s)} dr $ is increasing in $t$, this implies $\int_s^t e^{q (\b_r-\b_s)} dr  > K$ for all $t \ge t_0$, i.e.
      \[ \int_s^t e^{q (\b_r-\b_s)} dr  \to \infty, \]
    almost surely.
\end{proof}

\begin{proof}[Proof of Theorem \ref{ra_s:thm:finite_extinction}]:
  Let $B \subseteq H$ bounded and $x \in B$. Recall that $Z(t,s;\o)x$ satisfies 
  \begin{equation*}
   Z(t,s;\o)x = x + \int_s^t \mu_r(\o) A \left(r,\mu_r^{-1}(\o) Z(r,s;\o)x \right)dr.
  \end{equation*}
  In particular $Z(t,s;\o)x$ is absolutely continuous. By the chain-rule
  \begin{align*}
    \frac{d}{dt} \|Z(t,s;\o)x \|_H^2 
    &= \mu_t(\o)^2 \Vbk{A \left(t,\mu_t^{-1}(\o) Z(t,s;\o)x \right),\mu_t^{-1}(\o)Z(r,s;\o)x}\\
    &\le \l(\o) \mu_t(\o)^{2-p} \|Z(t,s;\o)x\|_H^p dr,
  \end{align*}
  for almost every $t \in [s,\infty)$. Lemma \ref{ra_s:lemma:comp_1} yields
  \begin{align*}
    \| Z(t,s;\o)x \|_H^2 
    &\le \left( \|x\|_H^{2-p} - (1-\frac{p}{2})\l(\o) \int_s^t \mu_\tau(\o)^{2-p} d\tau \vee 0 \right)^{\frac{2}{2-p}},
  \end{align*} 
  for all $t \ge s$. Since $S(t,s;\o)x = \mu_t(\o) Z(t,s;\o)(\mu_s(\o)^{-1}x)$, we obtain
  \begin{align*}
    \| S(t,s;\o)x \|_H^2 
    &\le \mu_t(\o)^2 \left( \left(\|\mu_s(\o)^{-1}x\|_H^{2-p} - (1-\frac{p}{2})\l(\o) \int_s^t \mu_\tau(\o)^{2-p} d\tau\right) \vee 0 \right)^{\frac{2}{2-p}} \\
    &\le \mu_t(\o)^2 \mu_s(\o)^{-2} \left(\left( \|B\|_H^{2-p} - (1-\frac{p}{2})\l(\o) \int_s^t e^{-\mu(2-p)(\b_\tau(\o)-\b_s(\o))} d\tau\right) \vee 0 \right)^{\frac{2}{2-p}}.
  \end{align*} 
  Now we can apply Lemma \ref{ra_s:lemma:integral_div} to obtain $\| S(t,s;\o)x \|_H^2 = 0$, for $t \ge t_0$ and $t_0=t_0(\|B\|_H,\o,s)$ large enough. 
\end{proof}

\section{Appendix}
\begin{proposition}\label{ra_s:prop:convergence}
  Let $A^\ve: [0,T] \times V \to V^*$, $\ve \ge 0$ be a family of monotone operators converging pointwisely in $V^*$, i.e.\ for every $t \in [0,T]$ and  $x \in V$ we have 
    \[ \|A^\ve(t,x)-A^0(t,x)\|_{V^*} \to 0, \]
  for $\ve \to 0$. Assume that there exists an $\a > 1$ such that $\|A^\ve(t,x)\|_{V^*}^\frac{\a}{\a-1} \le C (\|x\|_V^\a + 1)$ and that $Y^\ve$ are variational solutions to the corresponding equations
    \[ \frac{d}{dt} Y_t^\ve + A^\ve(t,Y_t^\ve) = 0,\ (\ve \ge 0)\]
  satisfying the uniform bound $\|Y_r^\ve\|_{L^\a([0,T];V)} \le C.$ Then 
    \[ \sup_{t \in [0,T]} \|Y^\ve_t - Y^0_t\|_H^2 \to 0, \]
  for $\ve \to 0$.
\end{proposition}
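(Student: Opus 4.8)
The plan is to run the standard monotonicity (energy) estimate on the difference $W^\ve_t := Y^\ve_t - Y^0_t$, and to control the only genuinely $\ve$-dependent contribution by combining the pointwise convergence of the drifts with the growth bound through dominated convergence. Throughout I take the two solutions to start from the same initial datum, so that $W^\ve_0 = 0$; if one only has $Y^\ve_0 \to Y^0_0$ in $H$, the harmless extra term $\tfrac12\|W^\ve_0\|_H^2 \to 0$ is carried along unchanged.

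First I would secure the regularity needed to differentiate the $H$-norm. Since $Y^\ve \in L^\a([0,T];V)$ and $\frac{d}{dt}Y^\ve_t = -A^\ve(t,Y^\ve_t)$, the growth assumption $\|A^\ve(t,x)\|_{V^*}^{\a/(\a-1)} \le C(\|x\|_V^\a+1)$ shows $\frac{d}{dt}Y^\ve \in L^{\a/(\a-1)}([0,T];V^*)$ (as in the compactness proof, cf.\ \cite[Theorem 1.6., p.104]{S97}), and likewise for $Y^0$. Hence $W^\ve$ lies in the space $W$ of the compactness proof, and the Gelfand-triple chain rule (cf.\ \cite[Theorem 4.2.5]{PR07}) applies. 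Writing out and splitting the drift difference gives
\begin{align*}
  \frac{1}{2}\frac{d}{dt}\|W^\ve_t\|_H^2
  &= -\Vbk{A^\ve(t,Y^\ve_t)-A^0(t,Y^0_t),\, W^\ve_t} \\
  &= -\Vbk{A^\ve(t,Y^\ve_t)-A^\ve(t,Y^0_t),\, W^\ve_t} - \Vbk{A^\ve(t,Y^0_t)-A^0(t,Y^0_t),\, W^\ve_t}.
\end{align*}
Monotonicity of $A^\ve$ makes $\Vbk{A^\ve(t,Y^\ve_t)-A^\ve(t,Y^0_t),\, W^\ve_t} \ge 0$, so the first term carries the favourable sign and is discarded, leaving
\begin{align*}
  \frac{1}{2}\frac{d}{dt}\|W^\ve_t\|_H^2
  &\le \|A^\ve(t,Y^0_t)-A^0(t,Y^0_t)\|_{V^*}\,\|W^\ve_t\|_V.
\end{align*}

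Integrating from $0$, using $W^\ve_0 = 0$, and applying Hölder with exponents $\tfrac{\a}{\a-1}$ and $\a$ yields, for every $t \in [0,T]$,
\begin{align*}
  \frac{1}{2}\|W^\ve_t\|_H^2
  &\le \Big(\int_0^T \|A^\ve(r,Y^0_r)-A^0(r,Y^0_r)\|_{V^*}^{\frac{\a}{\a-1}}\,dr\Big)^{\frac{\a-1}{\a}} \Big(\int_0^T \|W^\ve_r\|_V^\a\,dr\Big)^{\frac{1}{\a}}.
\end{align*}
The second factor is bounded by $2C$, since $\|W^\ve\|_{L^\a([0,T];V)} \le \|Y^\ve\|_{L^\a([0,T];V)} + \|Y^0\|_{L^\a([0,T];V)} \le 2C$. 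The decisive point is that the right-hand side no longer depends on $t$, so it majorises $\sup_{t \in [0,T]}\|W^\ve_t\|_H^2$.

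It remains to send the first factor to $0$, and this is the only step I expect to require genuine care. For a.e.\ $r \in [0,T]$ one has $Y^0_r \in V$, so the pointwise convergence hypothesis gives $\|A^\ve(r,Y^0_r)-A^0(r,Y^0_r)\|_{V^*} \to 0$ as $\ve \to 0$; at the same time the growth bound supplies the $\ve$-independent majorant $\|A^\ve(r,Y^0_r)-A^0(r,Y^0_r)\|_{V^*}^{\a/(\a-1)} \le C'(\|Y^0_r\|_V^\a+1)$, which is integrable on $[0,T]$ because $Y^0 \in L^\a([0,T];V)$. Dominated convergence then forces the first factor to $0$, whence $\sup_{t \in [0,T]}\|Y^\ve_t - Y^0_t\|_H^2 \to 0$ as $\ve \to 0$, as claimed.
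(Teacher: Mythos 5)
Your argument is correct and follows essentially the same route as the paper's proof: the same monotone splitting of the drift difference, the same H\"older estimate against the uniform $L^\a([0,T];V)$ bound, and the same dominated-convergence conclusion (where you helpfully make the integrable majorant explicit). No gaps.
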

\begin{proof}
  By the chain-rule
  \begin{align*}
    &\|Y_t^\ve-Y_t^0\|_H^2 = \int_0^t \dualdel{V}{-A^\ve(r,Y^\ve_r) + A^0(r,Y^0_r)}{Y_r^\ve-Y_r^0} dr \\
    &= \int_0^t \dualdel{V}{-A^\ve(r,Y^\ve_r) + A^\ve(r,Y^0_r)}{Y_r^\ve-Y_r^0} dr + \int_0^t \dualdel{V}{A^0(r,Y^0_r)-A^\ve(r,Y^0_r)}{Y_r^\ve-Y_r^0} dr \\
    &\le \int_0^t \dualdel{V}{A^0(r,Y^0_r)-A^\ve(r,Y^0_r)}{Y_r^\ve-Y_r^0} dr \\
    &\le \|A^0(r,Y^0_r)-A^\ve(r,Y^0_r)\|_{L^\frac{\a}{\a-1}([0,T];V^*)} \|Y^\ve-Y^0\|_{L^\a([0,T];V)}  \\
    &\le 2C \|A^0(\cdot,Y^0_\cdot)-A^\ve(\cdot,Y^0_\cdot)\|_{L^\frac{\a}{\a-1}([0,T];V^*)}.
  \end{align*}
  By dominated convergence the claim follows.
\end{proof}


\bibliographystyle{plain}
\bibliography{../latex-refs/refs}

\end{document}